\title{The Frobenius transform of a symmetric function}
\author{Mitchell Lee}
\address{Harvard University\\
Dept. of mathematics\\
1 Oxford st.\\
Cambridge\\ MA 02138 (USA)}
\email{mitchell@math.harvard.edu}
\keywords{symmetric functions, representation theory of categories, plethysm, Kronecker product}
\subjclass{05E05}
\newcommand{\statement}[1]{%
  #1\enspace\ignorespaces
}
\newcommand*{\ldblbrace}{\{\mskip-5mu\{}
\newcommand*{\rdblbrace}{\}\mskip-5mu\}}
\DeclareRobustCommand{\stirlingi}{\genfrac[]{0pt}{}}
\DeclareRobustCommand{\stirlingii}{\genfrac\{\}{0pt}{}}
\newcommand{\K}{\mathbb{C}}
\newcommand{\Z}{\mathbb{Z}}
\newcommand{\N}{\mathbb{N}}
\newcommand{\Vect}{\text{Vect}}
\newcommand{\Mod}{\mathrm{Mod}}
\newcommand{\Fa}{\mathscr{F}}
\newcommand{\Sur}{\mathchoice{\operatorname{Sur}}{\operatorname{Sur}}{\mathrm{Sur}}{\mathrm{Sur}}}
\newcommand{\Bij}{\mathchoice{\operatorname{Bij}}{\operatorname{Bij}}{\mathrm{Bij}}{\mathrm{Bij}}}
\newcommand{\rmSur}{\mathrm{Sur}}
\newcommand{\rmBij}{\mathrm{Bij}}
\newcommand{\rmFun}{\mathrm{Fun}}
\newcommand{\F}[1]{\mathscr{F}\{#1\}}
\newcommand{\FSur}[1]{\mathscr{F}_{\Sur}\left\{#1\right\}}
\newcommand{\FSurInv}[1]{\mathscr{F}_{\Sur}^{-1}\left\{#1\right\}}
\newcommand{\Sym}{\mathfrak{S}}
\DeclareMathOperator{\ch}{ch}
\DeclareMathOperator{\Hom}{Hom}
\DeclareMathOperator{\Lyndon}{Lyndon}
\def\multichoose#1#2{\ensuremath{\left(\kern-.3em\left(\genfrac{}{}{0pt}{}{#1}{#2}\right)\kern-.3em\right)}}
\theoremstyle{definition}
\newtheorem{defi}{Definition}
\newtheorem{exam}{Example}
\theoremstyle{plain}
\newtheorem{prop}{Proposition}
\newtheorem{theorem}{Theorem}
\newtheorem{lemma}{Lemma}
\newtheorem{coro}{Corollary}
\theoremstyle{remark}
\newtheorem{rema}{Remark}
\begin{document}

\begin{abstract}
    We define an abelian group homomorphism~$\Fa$, which we call the \emph{Frobenius transform}, from the ring of symmetric functions to the ring of the symmetric power series. The matrix entries of~$\Fa$ in the Schur basis are the \emph{restriction coefficients} $r_\lambda^\mu = \dim \operatorname{Hom}_{\Sym_n}(V_\mu, \mathbb{S}^\lambda \K^n)$, which are known to be nonnegative integers but have no known combinatorial interpretation. The Frobenius transform satisfies the identity $\F{fg} = \F{f} \ast \F{g}$, where~$\ast$ is the Kronecker product.

    We prove for all symmetric functions~$f$ that $\F{f} = \FSur{f} \cdot (1 + h_1 + h_2 + \cdots)$, where $\FSur{f}$ is a symmetric function with the same degree and leading term as~$f$. Then, we compute the matrix entries of $\Fa_{\Sur}$ in the complete homogeneous, elementary, and power sum bases and of $\Fa_{\Sur}^{-1}$ in the complete homogeneous and elementary bases, giving combinatorial interpretations of the coefficients where possible. In particular, the matrix entries of $\Fa^{-1}_{\Sur}$ in the elementary basis count words with a constraint on their Lyndon factorization.
    
    As an example application of our main results, we prove that $r_\lambda^\mu = 0$ if $|\lambda \cap \hat{\mu}| < 2|\hat{\mu}| - |\lambda|$, where $\hat{\mu}$ is the partition formed by removing the first part of~$\mu$. We also prove that $r_\lambda^\mu = 0$ if the Young diagram of~$\mu$ contains a square of side length greater than $2^{\lambda_1 - 1}$, and this inequality is tight.
\end{abstract}
\maketitle

\section{Introduction}
Let $n \geq 0$ and let~$\lambda$ be a partition with at most~$n$ parts. There is a corresponding irreducible $GL_n(\K)$-module: the Schur module $\mathbb{S}^\lambda \K^n$. Because the symmetric group $\Sym_n$ embeds in $GL_n(\K)$ by permutation matrices, one may ask: how does the restriction of $\mathbb{S}^\lambda \K^n$ to $\Sym_n$ decompose into irreducible $\Sym_n$-modules?

In other words, let~$\lambda$ and~$\mu$ be partitions and let $n = |\mu|$. What is the value of the \textit{restriction coefficient} \[r_{\lambda}^\mu = \dim \operatorname{Hom}_{\Sym_n}(V_\mu, \mathbb{S}^\lambda \K^n),\] where $V_\mu$ is the Specht module corresponding to the partition~$\mu$? This problem, called the \emph{restriction problem}, has held considerable recent interest \cite{MR4055931, MR4356269, MR4311085, MR4295089, MR4127906}. However, there remains no known combinatorial interpretation for $r_\lambda^\mu$.

Let~$\Lambda$ be the ring of symmetric functions in the variables $x_1, x_2, x_3, \ldots$ and let $\overline{\Lambda}$ be the ring of symmetric power series in $x_1, x_2, x_3, \ldots$. In this paper, we will consider the abelian group homomorphism $\Fa \colon \Lambda \to \overline{\Lambda}$ defined on the basis $\{s_\lambda\}$ of Schur functions by \[\F{s_\lambda} = \sum_{\mu} r_{\lambda}^\mu s_\mu.\] Equivalently, $\F{s_\lambda}$ is the result of applying the Frobenius character map to the representation $\bigoplus_{n} \mathbb{S}^\lambda \K^n$ of $\bigoplus_{n} \K[\Sym_n]$. For this reason, we call~$\Fa$ the \emph{Frobenius transform}. It encodes all information about all the restriction coefficients.

\Cref{section:basics} will cover the basic properties of the Frobenius transform, many of which are implicit in the work of Orellana and Zabrocki. For example, for any symmetric functions $f, g$, we have $\F{fg} = \F{f} \ast \F{g}$, where~$\ast$ is the Kronecker product of symmetric functions. Moreover, for any $f \in \Lambda$, there exists $\FSur{f} \in \Lambda$ with the same degree and leading term as~$f$ such that $\F{f} = \FSur{f} \cdot (1 + h_1 + h_2 + \cdots)$. We refer to the map $\Fa_{\Sur} \colon \Lambda \to \Lambda$ as the \emph{surjective Frobenius transform}. Because the surjective Frobenius transform preserves degree and leading term, it has an inverse, which we denote by $\Fa_{\Sur}^{-1}$.

The \emph{induced trivial character basis} $\{\tilde{h}_\lambda\}_\lambda$ and the \emph{irreducible character basis} $\{\tilde{s}_\lambda\}_\lambda$, which were introduced by Orellana and Zabrocki in 2021 \cite{MR4295089}, can be defined in terms of the inverse surjective Frobenius transform. Namely, $\tilde{h}_\lambda = \FSurInv{h_\lambda}$ and $\tilde{s}_\lambda = \FSurInv{(1 + h_1 + h_2 + \cdots)^{\perp} s_\lambda}$, where $f^\perp\colon \Lambda \to \Lambda$ denotes the operator adjoint under the Hall inner product to multiplication by~$f$.

In \Cref{section:stable}, we will use the Frobenius transform to study \emph{stable restriction coefficients}; that is, the limits \[a_\lambda^\mu = \lim_{n \to \infty} r_{\lambda}^{(n - |\mu|, \mu_1, \ldots, \mu_{\ell(\mu)})},\] which exist for all $\lambda, \mu$ by a classical result of Littlewood \cite{MR1576896}. In 2019, Assaf and Speyer found a formula for $a_\lambda^\mu$ and for the entries $b_\lambda^\mu$ of the inverse matrix $[b_\lambda^\mu] = [a_\lambda^\mu]^{-1}$ \cite{MR4055931}. We will provide an alternative proof of these formulas using the Frobenius transform. In \Cref{theorem:summary}, we will broadly summarize the known relationships between the five kinds of restriction coefficients considered in this paper.

In \Cref{section:expansion}, we will prove the following theorem, which shows how to write $\Fa_{\Sur} \colon \Lambda \to \Lambda$ as a sum of operators of the form $fg^\perp$.
\begin{restatable}{theorem}{expansion}\label{theorem:expansion}
    Let~$f$ be a symmetric function. Then
    \[
        \FSur{f} = \sum_{\lambda} s_\lambda (s_\lambda[h_2 + h_3 + h_4 + \cdots])^\perp f,
    \]
    where the sum is over all partitions~$\lambda$.
\end{restatable}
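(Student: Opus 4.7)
The plan is to pair the claimed identity against an arbitrary $g \in \Lambda$ under the Hall inner product and reduce to a pair of plethystic manipulations. Using the adjointness $\langle uv, w\rangle = \langle v, u^\perp w\rangle$ twice,
\[
\left\langle \sum_\lambda s_\lambda\,(s_\lambda[H_+])^\perp f,\ g\right\rangle
= \left\langle f,\ \sum_\lambda s_\lambda[H_+] \cdot s_\lambda^\perp g\right\rangle,
\]
where I write $H_+ := h_2 + h_3 + h_4 + \cdots$ and $H := 1 + h_1 + h_2 + \cdots$. The right-hand inner sum is the plethystic branching identity $\sum_\lambda s_\lambda[A] \cdot s_\lambda^\perp g = g[X + A]$ (a linear extension of $s_\mu(x,y) = \sum_\lambda s_\lambda(y)\,s_{\mu/\lambda}(x)$) applied with $A = H_+$. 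Since $X + H_+ = h_1 + h_2 + h_3 + \cdots = H - 1$ as a plethystic alphabet---here $X$ is the generic alphabet, equivalent plethystically to $h_1$---the theorem reduces to the adjoint formula
\[
\langle \FSur{f}, g\rangle = \langle f,\ g[H - 1]\rangle \qquad \text{for all } f, g \in \Lambda,
\]
i.e., to the assertion that the Hall adjoint of $\Fa_{\Sur}$ is the plethystic substitution $g \mapsto g[H - 1]$.

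I would then verify this adjoint formula by relating $\Fa_{\Sur}$ to $\Fa$. Taking Hall adjoints of the factorization $\F{f} = \FSur{f} \cdot H$ from \Cref{section:basics} gives $\Fa^* = \Fa_{\Sur}^* \circ H^\perp$. Combined with the analogous adjoint formula $\Fa^*(g) = g[H]$ for the full Frobenius transform---which I expect to be available from the basic setup in \Cref{section:basics}, being a plethystic repackaging of the character formula $\chi_{\mathbb{S}^\lambda \K^n}(\sigma) = s_\lambda(\text{eigenvalues of }\sigma)$ together with Frobenius's formula for the characteristic map---the desired $\Fa_{\Sur}^*(g) = g[H - 1]$ becomes equivalent to the purely plethystic identity
\[
g[H] = (H^\perp g)[H - 1].
\]
This last identity is itself a second instance of the branching rule: since $s_\lambda[1] = 1$ if $\lambda$ is a single row and $0$ otherwise (a short power-sum computation), splitting $H = (H - 1) + 1$ yields $g[H] = \sum_n (h_n^\perp g)[H - 1] = (H^\perp g)[H - 1]$.

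The principal obstacle is the auxiliary formula $\Fa^*(g) = g[H]$; once it is in hand, the remainder consists of two applications of the plethystic branching rule and the factorization $\F{f} = \FSur{f} \cdot H$. If that formula is not already packaged as a lemma earlier, the fallback is to unpack $\langle \F{s_\lambda}, s_\mu\rangle = r_\lambda^\mu$ through Frobenius's formula, recognizing the cycle index of $\Sym_n$ acting on $\K^n$ inside the plethystic expansion of $s_\mu[H]$.
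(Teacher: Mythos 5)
Your proposal is correct and follows essentially the same route as the paper: dualize, use the fact that $\Fa_{\Sur}$ is Hall-adjoint to plethysm by $h_1 + h_2 + h_3 + \cdots$, and split off the $h_1$ via the plethystic addition/branching formula together with the adjointness of multiplication and skewing. The only difference is that you re-derive that adjointness from $\F{f} = \FSur{f} \cdot H$ and \Cref{remark:halladjoint} via the identity $g[H] = (H^\perp g)[H-1]$ (a step that also needs $H^\perp$ to be surjective on $\Lambda$, which holds since $H^\perp$ is unitriangular), and this is exactly the content of the paper's second proof of \Cref{proposition:fsur}, recorded as \Cref{theorem:summary}(b).
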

Since \Cref{theorem:expansion} involves plethysm and there is no known simple formula for the plethysm of Schur functions, it is not well-suited for general computation. We will, however, use it to prove that many restriction coefficients $r_\lambda^\mu$ and stable restriction coefficients $a_\lambda^\mu$ vanish.
\begin{restatable}{theorem}{vanishing}\label{theorem:vanishing}
    Let $\lambda, \mu$ be partitions. If $r_\lambda^\mu > 0$, then $|\lambda \cap \hat \mu| \geq 2|\hat \mu| - |\lambda|$, where $\hat \mu = (\mu_2, \ldots, \mu_{\ell(\mu)})$ is the partition formed by removing the first part of~$\mu$.
\end{restatable}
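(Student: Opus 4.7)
The plan is to start from
\[
r_\lambda^\mu = \langle \F{s_\lambda},\, s_\mu\rangle = \langle \FSur{s_\lambda}\cdot H,\, s_\mu\rangle = \langle \FSur{s_\lambda},\, H^\perp s_\mu\rangle,
\]
where $H = 1+h_1+h_2+\cdots$. The dual Pieri rule gives $H^\perp s_\mu = \sum_{\sigma} s_\sigma$, summed over those $\sigma$ for which $\mu/\sigma$ is a horizontal strip; that condition is equivalent to $\hat\mu\subseteq\sigma\subseteq\mu$. It therefore suffices to prove that whenever $\langle \FSur{s_\lambda}, s_\sigma\rangle > 0$ for some such $\sigma$, one has $|\lambda\cap\hat\mu| \ge 2|\hat\mu|-|\lambda|$.

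To control $\langle \FSur{s_\lambda}, s_\sigma\rangle$, I would apply \Cref{theorem:expansion} with the summation index relabeled $\nu$:
\[
\FSur{s_\lambda} = \sum_\nu s_\nu\cdot\bigl(s_\nu[h_2+h_3+\cdots]\bigr)^\perp s_\lambda.
\]
Since $h_2+h_3+\cdots$ is Schur-positive, so is each plethysm $s_\nu[h_2+h_3+\cdots]$; then
\[
\bigl(s_\nu[h_2+h_3+\cdots]\bigr)^\perp s_\lambda = \sum_\pi \langle s_\lambda,\, s_\nu[h_2+h_3+\cdots]\cdot s_\pi\rangle\, s_\pi
\]
is itself Schur-positive, and so is the full right-hand side after multiplying by $s_\nu$ and expanding via Littlewood--Richardson. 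Consequently $\langle \FSur{s_\lambda}, s_\sigma\rangle$ is a sum of nonnegative integers indexed by pairs $(\nu,\pi)$, and its positivity forces some pair with $c_{\nu,\pi}^\sigma > 0$ and $\langle s_\lambda,\, s_\nu[h_2+h_3+\cdots]\cdot s_\pi\rangle > 0$.

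From such a pair I read off three constraints: $\nu,\pi\subseteq\sigma$ with $|\nu|+|\pi| = |\sigma|$ (from $c_{\nu,\pi}^\sigma > 0$); $\pi\subseteq\lambda$ (since $s_\lambda$ appears in $s_\pi\cdot s_\nu[h_2+h_3+\cdots]$, so Littlewood--Richardson forces $\pi\subseteq\lambda$); and, because $s_\nu[h_2+h_3+\cdots]$ has minimum degree $2|\nu|$, the degree bound $|\lambda| \ge 2|\nu|+|\pi|$. Substituting $|\nu| = |\sigma|-|\pi|$ yields $|\pi| \ge 2|\sigma|-|\lambda|$, so together with $\pi\subseteq\lambda\cap\sigma$ we obtain
\[
|\lambda\cap\sigma| \;\ge\; |\pi| \;\ge\; 2|\sigma|-|\lambda|.
\]

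Finally, because $\hat\mu\subseteq\sigma$, the disjoint decomposition $\lambda\cap\sigma = (\lambda\cap\hat\mu)\sqcup(\lambda\cap(\sigma\setminus\hat\mu))$ gives $|\lambda\cap\sigma| \le |\lambda\cap\hat\mu|+(|\sigma|-|\hat\mu|)$; combined with the previous inequality and $|\sigma| \ge |\hat\mu|$ this rearranges to $|\lambda\cap\hat\mu| \ge |\sigma|+|\hat\mu|-|\lambda| \ge 2|\hat\mu|-|\lambda|$. I expect the main obstacle to be the Schur-positivity check for the expansion in \Cref{theorem:expansion}, which is what converts a formal identity into a term-by-term nonnegativity argument; after that, the degree and Young-diagram containment bookkeeping and the elementary ``bridge'' from $\sigma$ back to $\hat\mu$ are routine.
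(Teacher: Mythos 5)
Your proposal is correct and follows essentially the same route as the paper: your middle step is exactly the paper's \Cref{theorem:surjectivevanishing} (if $t_\lambda^\sigma \neq 0$ then $|\lambda \cap \sigma| \geq 2|\sigma| - |\lambda|$), which the paper also proves via \Cref{theorem:expansion} with your pair $(\nu,\pi)$ playing the role of its $(\nu,\rho)$, followed by the same horizontal-strip reduction from $\mu$ to $\sigma$ and the same bridging inequality using $\hat\mu \subseteq \sigma$. The only cosmetic differences are that you expand $H^\perp s_\mu$ by the dual Pieri rule rather than multiplying $\FSur{s_\lambda}$ by $H$, and the Schur-positivity check you flag as the main obstacle is not actually needed, since positivity of $r_\lambda^\mu$ already forces some nonzero term, and Littlewood--Richardson coefficients are nonnegative in any case.
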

\begin{restatable}{theorem}{stablevanishing}\label{theorem:stablevanishing}
    Let $\lambda, \mu$ be partitions. If $a_\lambda^\mu > 0$, then $|\lambda \cap \mu| \geq 2|\mu| - |\lambda|$.
\end{restatable}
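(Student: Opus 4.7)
The plan is to obtain \Cref{theorem:stablevanishing} as an essentially immediate corollary of \Cref{theorem:vanishing}, exploiting the fact that the stable restriction coefficient $a_\lambda^\mu$ is, by definition, the eventual value of an ordinary restriction coefficient where the role of the ``hat'' in \Cref{theorem:vanishing} is played by $\mu$ itself.

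More concretely, assume $a_\lambda^\mu > 0$. By the definition $a_\lambda^\mu = \lim_{n \to \infty} r_\lambda^{(n - |\mu|, \mu_1, \ldots, \mu_{\ell(\mu)})}$ and Littlewood's stabilization result cited in the introduction, the sequence $r_\lambda^{(n - |\mu|, \mu_1, \ldots, \mu_{\ell(\mu)})}$ is eventually constant and equal to $a_\lambda^\mu$. Fix any $n$ large enough that this holds and such that $\mu' := (n - |\mu|, \mu_1, \ldots, \mu_{\ell(\mu)})$ is a genuine partition (i.e., $n - |\mu| \geq \mu_1$). Then $r_\lambda^{\mu'} = a_\lambda^\mu > 0$, so \Cref{theorem:vanishing} applies to the pair $(\lambda, \mu')$ and yields
\[
    |\lambda \cap \hat{\mu'}| \geq 2|\hat{\mu'}| - |\lambda|.
\]

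The key observation, which finishes the proof, is simply that $\hat{\mu'} = (\mu_1, \ldots, \mu_{\ell(\mu)}) = \mu$ by construction, so $|\hat{\mu'}| = |\mu|$ and $|\lambda \cap \hat{\mu'}| = |\lambda \cap \mu|$. Substituting gives $|\lambda \cap \mu| \geq 2|\mu| - |\lambda|$, as desired.

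There is no real obstacle here: the entire content of the stable statement is already encoded in the non-stable statement, because the hat operation removes precisely the part $n - |\mu|$ that gets sent to infinity in the stable limit. The only mild care needed is to ensure that $\mu'$ is a valid partition, which is automatic for all sufficiently large $n$.
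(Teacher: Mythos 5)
Your proof is correct, but it takes a genuinely different route from the paper. You deduce \Cref{theorem:stablevanishing} directly from \Cref{theorem:vanishing}: since $a_\lambda^\mu$ is the limit of the integer sequence $r_\lambda^{\mu^{(n)}}$, positivity of the limit forces $r_\lambda^{\mu^{(n)}} > 0$ for some large $n$ with $\mu^{(n)}$ a partition, and then $\widehat{\mu^{(n)}} = \mu$ turns the conclusion of \Cref{theorem:vanishing} into exactly the stable inequality. This is airtight (a convergent sequence of nonnegative integers is eventually constant, and $n \geq \mu_1 + |\mu|$ guarantees $\mu^{(n)}$ is a partition), and it is not circular, since the paper proves \Cref{theorem:vanishing} independently of the stable statement. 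The paper instead argues in parallel with its proof of \Cref{theorem:vanishing}: it uses \Cref{theorem:summary}(d) together with the Pieri rule to write $a_\lambda^\mu = \sum_{\nu} t_\lambda^\nu$, the sum over $\nu$ with $\nu/\mu$ a horizontal strip, and then applies \Cref{theorem:surjectivevanishing} to such a $\nu \supseteq \mu$. Your argument is shorter and makes transparent that the stable theorem is literally a corollary of the finite one, because the hat operation removes precisely the part that is sent to infinity; the paper's argument stays entirely inside the surjective-transform framework, does not need to invoke the limit definition at the point of proof (the stabilization having already been packaged into \Cref{theorem:summary}(d)), and yields the slightly finer structural fact that positivity of $a_\lambda^\mu$ is witnessed by a surjective coefficient $t_\lambda^\nu$ with $\nu/\mu$ a horizontal strip.
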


In \Cref{section:computations}, we will compute $\FSur{f}$ when~$f$ is a complete homogeneous, elementary, or power sum symmetric function. We have $\FSur{e_n} = e_n$ and $\FSur{p_n} = \sum_{d \mid n} p_d$ for $n \geq 1$. More generally, with $\N = \{0, 1, 2, \ldots\}$:

\begin{restatable}{theorem}{computation}\label{theorem:computation}
Let~$\lambda$ be a partition and let $\ell = \ell(\lambda)$ be its length.
\begin{enumerate}[label=(\alph*)]
    \item \label{item:fsurh} Then \[\FSur{h_\lambda} = \sum_{M} \prod_{j \in \N^\ell} h_{M(j)},\] where the sum is over all functions $M\colon \N^\ell \to \N$ such that $M(0, \ldots, 0) = 0$ and $\sum_{j \in \N^\ell} j_i M(j) = \lambda_i$ for $i = 1, \ldots, \ell$.
    \item \label{item:fsure} Then \[\FSur{e_\lambda} = \sum_{M} \prod_{j \in \{0, 1\}^\ell}\begin{cases}
    h_{M(j)} & \mbox{if $j_1 + \cdots + j_\ell$ is even;} \\
    e_{M(j)} & \mbox{if $j_1 + \cdots + j_\ell$ is odd,}
    \end{cases}\] where the sum is over all functions $M \colon \{0, 1\}^\ell \to \N$ such that $M(0, \ldots, 0) = 0$ and $\sum_{j \in \{0, 1\}^\ell} j_i M(j) = \lambda_i$ for $i = 1, \ldots, \ell$.
    \item \label{item:fsurp} Then \[\FSur{p_\lambda} = \sum_{\pi} \prod_{U \in \pi}\left(\sum_{d \mid \gcd\{\lambda_i \colon i \in U\}} d^{|U| - 1} p_d\right),\] where the outer sum is over all partitions~$\pi$ of $\{1, \ldots, \ell\}$ into nonempty sets.
\end{enumerate}
\end{restatable}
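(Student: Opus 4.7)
The overarching plan is to compute $\F{f}$ directly in each case by interpreting it as the Frobenius characteristic of a sequence of $\Sym_N$-representations obtained from $GL_N(\K)$-modules via the embedding $\Sym_N \hookrightarrow GL_N(\K)$, and then to read off $\FSur{f}$ by dividing out $H := 1 + h_1 + h_2 + \cdots$.

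For part~(a), I would decompose $\mathbb{S}^\lambda \K^N = \operatorname{Sym}^{\lambda_1} \K^N \otimes \cdots \otimes \operatorname{Sym}^{\lambda_\ell} \K^N$ along its monomial basis, which is indexed by matrices $\phi \colon \{1, \ldots, \ell\} \times \{1, \ldots, N\} \to \N$ with row sums~$\lambda_i$. Two such $\phi$'s lie in the same $\Sym_N$-orbit iff their column multisets (as multisets of elements of $\N^\ell$) agree, so orbits are parametrized by functions $M \colon \N^\ell \to \N$ with $\sum_j M(j) = N$ and $\sum_j j_i M(j) = \lambda_i$; the stabilizer of any representative is $\prod_j \Sym_{M(j)}$, and the corresponding induced trivial representation has Frobenius characteristic $\prod_j h_{M(j)}$. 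Summing over $N \geq 0$, the free coordinate $M(0, \ldots, 0)$ contributes exactly the factor~$H$, yielding~(a). Part~(b) follows the same outline with exterior powers in place of symmetric powers, so $\phi$ takes values in $\{0, 1\}$; the only new subtlety is that a transposition in $\Sym_{M(j)}$ simultaneously swaps two elements of $S_i$ for every~$i$ with $j_i = 1$, producing the sign $(-1)^{|j|}$. Hence the stabilizer character is trivial or sign according to the parity of $|j| = j_1 + \cdots + j_\ell$, contributing $h_{M(j)}$ or $e_{M(j)}$ respectively to the Frobenius characteristic.

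For part~(c), I first compute $\F{p_n}$. The character of $\mathbb{S}^\lambda \K^N$ at $\sigma \in \Sym_N$ is $s_\lambda$ evaluated at the eigenvalues of the permutation matrix of~$\sigma$, so by $\Z$-linear extension the class function $\sigma \mapsto p_n$ at those eigenvalues equals $\operatorname{tr}(\sigma^n) = \sum_{d \mid n} d \cdot m_d(\nu(\sigma))$. Substituting into $\ch(\chi) = \sum_\nu \chi(\nu) p_\nu / z_\nu$, summing over~$N$, and invoking $\exp\bigl(\sum_j p_j/j\bigr) = H$ yields $\FSur{p_n} = \sum_{d \mid n} p_d$. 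For general~$\lambda$, the Kronecker product is diagonal on the basis of primitive idempotents $\{p_\mu/z_\mu\}$, giving
\[
    \F{p_\lambda} = \F{p_{\lambda_1}} \ast \cdots \ast \F{p_{\lambda_\ell}} = \sum_\mu \frac{p_\mu}{z_\mu} \prod_{i=1}^\ell \Biggl(\sum_{d \mid \lambda_i} d \cdot m_d(\mu)\Biggr).
\]

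The main obstacle is matching this expression with the claimed set-partition sum. I would interpret $\prod_i \sum_{d \mid \lambda_i} d \cdot m_d(\mu)$ combinatorially as counting tuples $(\phi(1), \ldots, \phi(\ell))$ in which $\phi(i)$ is an ordered pair consisting of a part~$P$ of~$\mu$ (with $P \mid \lambda_i$) together with a \emph{position} in $\{1, \ldots, P\}$. Grouping these tuples by the set partition~$\pi$ of $\{1, \ldots, \ell\}$ recording which indices share a part, interchanging the sums, and substituting $m_d(\mu) = c_d + k_d$ (where $c_d$ counts the blocks of~$\pi$ labelled by a part of value~$d$) turns the inner sum over~$\mu$ into $\prod_d (p_d/d)^{c_d} \exp(p_d/d)$, which factors as $H \cdot \prod_{U \in \pi} p_{d_U}/d_U$. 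The remaining factor $\prod_U d_U^{|U|}$, coming from the number of positions inside each shared part, then collapses each block's contribution to $\sum_{d \mid \gcd_{i \in U} \lambda_i} d^{|U| - 1} p_d$, producing the claimed formula.
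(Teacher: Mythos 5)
Your proposal is correct, but for parts (a) and (b) it takes a genuinely different route from the paper. The paper proves all three parts by generating-function manipulation: it applies \Cref{proposition:rootsofunityformula} to $f = H(t_1)\cdots H(t_\ell)$ (resp.\ $E(t_1)\cdots E(t_\ell)$), collapses the evaluation at $\Xi_\mu$ to $\prod_{i,j}(1-t_i^{\mu_j})^{-1}$, recognizes the sum over $\mu$ as a product of exponentials, and obtains $\F{f} = \prod_{j\in\N^\ell} H(t_1^{j_1}\cdots t_\ell^{j_\ell})$ (resp.\ the analogous $H$/$E$ product), from which the formulas are read off as coefficients after dividing by $H$. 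You instead argue module-theoretically: using $\F{fg}=\F{f}\ast\F{g}$ to identify the degree-$N$ part of $\F{h_\lambda}$ (resp.\ $\F{e_\lambda}$) with the characteristic of the $\Sym_N$-module $\bigotimes_i \operatorname{Sym}^{\lambda_i}\K^N$ (resp.\ $\bigotimes_i \wedge^{\lambda_i}\K^N$), and decomposing this (twisted) permutation module into orbit pieces induced from the Young subgroups $\prod_j \Sym_{M(j)}$ carrying trivial or sign characters according to the parity of $j_1+\cdots+j_\ell$. This is essentially the ``purely combinatorial'' proof that the paper only alludes to in a remark and defers to a separate paper; it buys a structural explanation of why products of $h$'s and $e$'s appear and why the free coordinate $M(0,\ldots,0)$ produces the factor $H$, whereas the paper's computation is shorter and treats (a), (b), (c) uniformly from \Cref{proposition:rootsofunityformula}. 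For part (c) your argument is essentially the paper's: both start from $\F{p_\lambda}=\sum_\mu \prod_{i}\bigl(\sum_{d\mid\lambda_i} d\, m_d(\mu)\bigr)p_\mu/z_\mu$ (you reach it via $\FSur{p_n}$ and the diagonality of the Kronecker product, the paper directly from \Cref{proposition:rootsofunityformula}) and then regroup by set partitions of $[\ell]$; the paper converts the powers of $m_d(\mu)$ into falling factorials via Stirling numbers of the second kind, while your (part instance, position) count produces the same falling factorials directly, after which both evaluate the inner sum over $\mu$ as $\prod_d (p_d/d)^{k_d}\exp(p_d/d)$. One cosmetic slip worth fixing: $\bigotimes_i\operatorname{Sym}^{\lambda_i}\K^N$ is not $\mathbb{S}^\lambda\K^N$ (the Schur module is only a direct summand of that tensor product); what your argument actually needs, and correctly uses, is that the tensor product is a $GL_N$-module with character $h_\lambda$, so its restriction to $\Sym_N$ has Frobenius characteristic equal to the degree-$N$ part of $\F{h_\lambda}$.
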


A statement equivalent to parts (a) and (b) of this theorem has appeared previously in the work of Orellana and Zabrocki \cite[Equation~(6)]{MR4245137}.

\Cref{theorem:computation} has the following interesting consequence. For any partition~$\mu$, denote by $D(\mu)$ the size of the Durfee square of~$\mu$; that is, $D(\mu)$ is the largest integer~$d$ such that $\mu_{d} \geq d$ \cite[Chapter~8]{MR2122332}.

\begin{restatable}{theorem}{durfeebound}\label{theorem:durfeebound}
    Let~$\mu$ be a partition and let $k \geq 1$ be an integer. The following are equivalent:
    \begin{enumerate}[label=(\Alph*)]
        \item There exists a partition~$\lambda$ such that $\lambda_1 \leq k$ and $r_{\lambda}^\mu > 0$.
        \item $D(\mu) \leq 2^{k - 1}$.
    \end{enumerate}
\end{restatable}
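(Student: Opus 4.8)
The plan is to reformulate condition (A) as a statement about the Schur support of the symmetric power series $\F{e_\pi}$, and then extract the Durfee‑square bound directly from \Cref{theorem:computation}(b). First I would reduce to $e$'s. Recall $[s_\lambda]e_\pi = K_{\lambda'\pi}$, and $K_{\lambda'\pi}\neq 0$ precisely when $\lambda'$ dominates $\pi$; since $\lambda'$ and $\pi$ then have the same size, $\sum_{i\le\ell(\pi)}\lambda'_i\ge\sum_{i\le\ell(\pi)}\pi_i$ forces $\ell(\lambda')\le\ell(\pi)$. Because $\Fa$ is additive and every $r_\nu^\mu$ is a nonnegative integer, it follows that (A) holds for $\mu$ if and only if $s_\mu$ occurs in $\F{e_\pi}$ for some partition $\pi$ with $\ell(\pi)\le k$: for one direction take $\pi=\lambda'$ and use $[s_\lambda]e_{\lambda'}=K_{\lambda'\lambda'}\ge 1$; for the other, expand $\F{e_\pi}=\sum_\lambda K_{\lambda'\pi}\F{s_\lambda}$ and note $\ell(\lambda')\le\ell(\pi)\le k$ for every $\lambda$ that contributes. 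Next, using $\F{f}=\FSur{f}\cdot(1+h_1+h_2+\cdots)$ together with \Cref{theorem:computation}(b), I would write $\F{e_\pi}$, for $\ell(\pi)=\ell$, as a sum of products of the form
\[
  h_m\prod_{\substack{j\in\{0,1\}^\ell\\ |j|\text{ even}}}h_{M(j)}\prod_{\substack{j\in\{0,1\}^\ell\\ |j|\text{ odd}}}e_{M(j)}.
\]
The key count is that $\{0,1\}^\ell$ contains exactly $2^{\ell-1}$ vectors of odd weight and exactly $2^{\ell-1}-1$ of even positive weight, so together with the trailing factor $h_m$ each term is a product of at most $2^{\ell-1}$ elementary and at most $2^{\ell-1}$ complete homogeneous symmetric functions; being Schur‑positive by the Littlewood--Richardson rule, such a product contributes $s_\mu$ iff $s_\mu$ occurs in one individual term.

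For $(A)\Rightarrow(B)$ the one remaining ingredient is a lemma: if $g=e_{c_1}\cdots e_{c_t}h_{b_1}\cdots h_{b_s}$, then every $s_\mu$ occurring in $g$ satisfies $\mu_{s+1}\le t$, and hence $D(\mu)\le\max(s,t)$. This follows from Pieri's rule: the elementary factors produce only shapes $\nu$ with $\nu_1\le t$, and adjoining $s$ horizontal strips to $\nu$ forces row $s+1$ of the result to be at most $\nu_1\le t$; since $\mu_d\le\mu_{s+1}\le t$ for all $d\ge s+1$, a row with $\mu_d\ge d$ and $d>s$ must have $d\le t$. Applying the lemma with $s=t=2^{\ell(\pi)-1}$ and $\ell(\pi)=\lambda_1\le k$ (taking $\pi=\lambda'$) yields $D(\mu)\le 2^{k-1}$ whenever $r_\lambda^\mu>0$ with $\lambda_1\le k$.

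For $(B)\Rightarrow(A)$, let $D=D(\mu)\le 2^{k-1}$ and split $\mu=\rho\sqcup\tau$ with $\rho=(\mu_1,\dots,\mu_D)$ and $\tau=(\mu_{D+1},\mu_{D+2},\dots)$; then $\mu_D\ge D$, while $\tau_1=\mu_{D+1}\le D$, so $\tau'$ has at most $D$ parts. The skew shape $\mu/\rho$ is a translate of the straight shape $\tau$, so $c^{\mu}_{\rho\tau}=1$; combined with $[s_\rho]h_\rho\ge 1$ and $[s_\tau]e_{\tau'}\ge 1$, this shows $s_\mu$ occurs with positive coefficient in $e_{\tau'}\,h_\rho$. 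I would then realize $e_{\tau'}h_\rho$ as one of the terms displayed above by taking $\ell=\lceil 1+\log_2 D\rceil\le k$, distributing the (at most $D$) parts of $\tau'$ among the $2^{\ell-1}$ odd‑weight vectors of $\{0,1\}^\ell$ and the $D$ parts of $\rho$ among the even‑weight vectors together with the $h_m$‑slot, and setting $\pi_i=\sum_j j_iM(j)$; one checks the assignment can be chosen so that $\pi$ has exactly $\ell$ positive parts (e.g.\ by including, among the vectors used for $\rho$, a family of weight‑two vectors covering $\{1,\dots,\ell\}$). Then $s_\mu$ occurs in $\F{e_\pi}$ with $\ell(\pi)\le k$, so by the reformulation some $r_\lambda^\mu>0$ with $\lambda_1\le k$.

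I expect the fiddliest point to be the bookkeeping in this last step — matching the multiset of $e$‑ and $h$‑indices to the vectors of $\{0,1\}^\ell$ while forcing $\ell(\pi)$ to equal $\ell$ — but it is mechanical once $2^{\ell-1}\ge D$ is in hand. The genuine content lies in the two clean facts above: the $2^{\ell-1}$‑count of odd‑weight binary vectors governing the elementary factors in \Cref{theorem:computation}(b), and the inequality $D(\mu)\le\max(s,t)$ for a product of $s$ complete homogeneous and $t$ elementary symmetric functions.
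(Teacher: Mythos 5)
Your proposal is correct and follows essentially the same route as the paper: your Kostka--dominance reduction to the statement ``$s_\mu$ appears in $\F{e_\pi}$ for some $\pi$ with $\ell(\pi)\le k$'' is the paper's \Cref{lemma:spans} step, the use of \Cref{theorem:computation}(b) with the factor $H$ absorbed and the count of $2^{k-1}$ odd-weight and $2^{k-1}$ even-weight vectors is the paper's equivalence (C\textsubscript{2})--(C\textsubscript{4}), and your two Pieri facts (that $s_\mu$ in a product of $s$ homogeneous and $t$ elementary factors forces $\mu_{s+1}\le t$, and the Durfee-square decomposition giving $s_\mu$ in $h_\rho e_{\tau'}$ with at most $D(\mu)$ factors of each kind) are exactly the two directions of the paper's \Cref{lemma:durfee}. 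The only wrinkle is your insistence that $\pi$ have exactly $\ell$ positive parts: the weight-two covering trick can fail (e.g.\ $D(\mu)=1$ with $\tau'=\emptyset$), but the requirement is unnecessary, since any coordinate with $\pi_i=0$ has $M(j)=0$ for all $j$ with $j_i=1$, so the term descends to one for the shorter partition and $\ell(\pi)\le\ell\le k$ is all you need.
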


In particular, $r_{\lambda}^\mu = 0$ if $D(\mu) > 2^{\lambda_1 - 1}$.

Finally, in \Cref{section:invcomputations}, we will compute $\FSurInv{e_\lambda}$ and $\FSurInv{h_\lambda}$ (\Cref{theorem:invcomputation}). In particular, we will prove that \[\FSurInv{e_\lambda} = \sum_{\mu} (-1)^{|\lambda| - |\mu|} L_\lambda^\mu e_\mu,\] where $L_\lambda^\mu$ is a nonnegative integer with an explicit combinatorial interpretation involving Lyndon words (\Cref{corollary:fsurinve}).

\section{Preliminaries}\label{section:preliminaries}
Apart from the definition of the restriction coefficients $a_\lambda^\mu$ (\Cref{definition:restriction} below), all the definitions in this section can be found in any standard reference on the theory of symmetric functions \cite{MR1464693} \cite[Chapter~I]{MR3443860} \cite[Chapter~7]{MR4621625}.

Let $\Lambda = \Lambda_\mathbb{Z}$ denote the ring of symmetric functions over $\mathbb{Z}$ in the variables $x_1, x_2, x_3, \ldots$. For $n \geq 0$, let $\Lambda_n$ denote the subgroup of~$\Lambda$ consisting of all symmetric functions that are homogeneous of degree~$n$. Let $\overline{\Lambda}$ denote the ring of symmetric formal power series (that is, formal sums $\sum_{n} f_n$, where each $f_n \in \Lambda_n$). Let $\langle \cdot, \cdot \rangle \colon \Lambda \times \overline{\Lambda} \to \Z$ denote the Hall inner product.

For any partition $\lambda = (\lambda_1, \ldots, \lambda_\ell)$, define the \emph{length} $\ell(\lambda) = \ell$ and the \emph{size} $|\lambda| = \lambda_1 + \cdots + \lambda_\ell$. Let $m_i(\lambda)$ be the number of times the part~$i$ appears in~$\lambda$, and let $z_\lambda = \prod_i i^{m_i(\lambda)} (m_i(\lambda))!$. Let $\lambda^T$ denote the dual (i.e. transpose) of~$\lambda$. Let $m_\lambda, e_\lambda, h_\lambda, p_\lambda, s_\lambda \in \Lambda$ denote the monomial, elementary, homogeneous, power sum, and Schur symmetric functions respectively. Let $V_\lambda$ denote the corresponding \emph{Specht module}, which is an irreducible $\Sym_{|\lambda|}$-module. Let $\chi_\lambda$ denote the character of $V_\lambda$. Let $\mathbb{S}^\lambda$ denote the corresponding \emph{Schur functor}, which is an endofunctor of the category of vector spaces over~$\K$.

For any partitions $\lambda, \mu$, define the \emph{intersection} $\lambda \cap \mu$ by $\ell(\lambda \cap \mu) = \min(\ell(\lambda), \ell(\mu))$ and $(\lambda \cap \mu)_i = \min(\lambda_i, \mu_i)$. That is, it is the partition whose Young diagram is the intersection of the Young diagrams of~$\lambda$ and~$\mu$. Let us say that $\lambda \subseteq \mu$ if $\lambda \cap \mu = \lambda$.

Let $\omega \colon \Lambda \to \Lambda$ be the ring homomorphism given by $\omega(p_k) = (-1)^{k-1} p_k$. Recall that~$\omega$ is an involution and that for all partitions~$\lambda$, we have $\omega(h_\lambda) = e_\lambda$ and $\omega(s_\lambda) = s_{\lambda^T}$.

The \emph{Lyndon symmetric function} $L_n$ is given by \[L_n = \frac{1}{n} \sum_{d \mid n} \mu(d) p_d^{n/d}\] for $n \geq 1$, where $\mu \colon \{1, 2, 3, \ldots\} \to \{-1, 0, 1\}$ is the M\"obius function.

For all $f \in \overline{\Lambda}$, the \emph{skewing operator} $f^\perp \colon \Lambda \to \Lambda$ is the adjoint to multiplication by~$f$ under the Hall inner product: \[\langle g, f^\perp h \rangle = \langle fg, h \rangle.\]

We say that a Schur function $s_\lambda$ \emph{appears} in $f \in \overline{\Lambda}$ if $\langle s_\lambda, f \rangle \neq 0$. We say that~$f$ is \emph{Schur positive} if $\langle s_\lambda, f \rangle \geq 0$ for all partitions~$\lambda$.

Let $C_n$ denote the space of all~$\K$-valued class functions on $\Sym_n$. Let $R_n$ denote the additive group of all virtual characters on $\Sym_n$. In other words, $R_n$ is the subgroup of $C_n$ generated by the irreducible characters $\chi_\lambda$. The~$n$th \emph{Frobenius character map} is the map $\ch_n \colon R_n \to \Lambda_n$ defined by \[\ch_n(\chi) = \frac{1}{n!}\sum_{w \in \Sym_n} \chi(w) p_{c(w)},\] where $c(w)$ denotes the cycle type of~$w$. It is well-known that $\ch_n$ is an isomorphism and that $\ch_n(\chi_\lambda) = s_\lambda$ for all~$\lambda$ with $|\lambda| = n$.

The \emph{Kronecker product} is the unique bilinear operator $\ast \colon \Lambda \times \Lambda \to \Lambda$ satisfying $p_\lambda \ast p_\mu = \delta_{\lambda \mu} z_\lambda p_\lambda$ for all $\lambda, \mu$. It extends to a bilinear operator $\ast \colon \overline{\Lambda} \times \overline{\Lambda} \to \overline{\Lambda}$ by continuity. The Frobenius character map and the Kronecker product are related in the following way: for any $\chi_1, \chi_2 \in R_n$, we have $\ch(\chi_1 \chi_2) = \ch(\chi_1) \ast \ch(\chi_2)$.

For any $f, g \in \overline{\Lambda}$, let $f[g]$ denote the plethysm of~$f$ by~$g$. This is well-defined as long as $f \in \Lambda$ or~$g$ has no constant term.

\begin{prop}[{Plethystic Addition Formula, \cite[Section~3.2]{MR2765321}}]\label{proposition:plethysticaddition}
    Let~$\lambda$ be a partition and let $f, g \in \overline{\Lambda}$. Then
    \[s_{\lambda}[f + g] = \sum_{\mu} s_{\lambda / \mu}[f] s_\mu[g],\] where the sum is over all partitions~$\mu$.
\end{prop}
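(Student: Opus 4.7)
My plan is to exploit the Hopf algebra structure on~$\Lambda$. The coproduct $\Delta \colon \Lambda \to \Lambda \otimes \Lambda$ is defined by evaluation in two disjoint alphabets; from the standard identity $s_\lambda(x_1, x_2, \ldots, y_1, y_2, \ldots) = \sum_\mu s_{\lambda/\mu}(x) s_\mu(y)$ one reads off $\Delta(s_\lambda) = \sum_\mu s_{\lambda/\mu} \otimes s_\mu$, and similarly $\Delta(p_k) = p_k \otimes 1 + 1 \otimes p_k$. I would reformulate the claim as follows: for any fixed $f, g \in \overline{\Lambda}$, the two maps $\phi, \psi \colon \Lambda \to \overline{\Lambda}$ defined by $\phi(h) = h[f + g]$ and $\psi(h) = \sum_{(h)} h_{(1)}[f] \cdot h_{(2)}[g]$ (using Sweedler notation for $\Delta(h) = \sum_{(h)} h_{(1)} \otimes h_{(2)}$) agree. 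Applying $\phi(s_\lambda) = \psi(s_\lambda)$ and expanding $\Delta(s_\lambda)$ then yields the proposition immediately.

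The first step is to observe that both~$\phi$ and~$\psi$ are ring homomorphisms. For~$\phi$, this is the standard multiplicativity of plethysm in the outer argument. For~$\psi$, the coproduct~$\Delta$ is itself a ring homomorphism (a routine consequence of $h_1(x, y) h_2(x, y) = (h_1 h_2)(x, y)$), while the evaluation map $a \otimes b \mapsto a[f] \cdot b[g]$ is a ring homomorphism $\Lambda \otimes \Lambda \to \overline{\Lambda}$, so~$\psi$ is their composition.

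The second step relies on the fact that two ring homomorphisms out of~$\Lambda$ that agree on a generating set must be equal, and that the $p_k$ generate $\Lambda \otimes \mathbb{Q}$. So it suffices to verify $\phi(p_k) = \psi(p_k)$ for all $k \geq 1$. Here $\phi(p_k) = p_k[f + g] = p_k[f] + p_k[g]$ directly from the definition $p_k[u](x_1, x_2, \ldots) = u(x_1^k, x_2^k, \ldots)$, while $\psi(p_k) = p_k[f] \cdot 1 + 1 \cdot p_k[g] = p_k[f] + p_k[g]$ using $1[u] = 1$. The identity then descends from $\Lambda_\mathbb{Q}$ back to~$\Lambda$ because~$\Lambda$ embeds in $\Lambda_\mathbb{Q}$, and both sides of the proposed identity are honest elements of~$\overline{\Lambda}$.

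The main obstacle, if any, is the bookkeeping around well-definedness: the paper's convention requires the outer argument of a plethysm to lie in~$\Lambda$ (or the inner argument to have no constant term), but since the outer arguments above are always Schur, skew-Schur, or power-sum symmetric functions in~$\Lambda$, everything makes sense. One should also check that each graded piece $\Delta \colon \Lambda_n \to \bigoplus_{i + j = n} \Lambda_i \otimes \Lambda_j$ lands in a finite direct sum, so no completion of $\Lambda \otimes \Lambda$ is required; this is immediate from the grading.
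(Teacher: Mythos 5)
Your proof is correct, but note that the paper does not prove this proposition at all: it is quoted as background from Loehr and Remmel's expos\'e on plethysm, so there is no in-paper argument to compare against. Your route — reformulating the claim as the equality of the two ring homomorphisms $\phi(h) = h[f+g]$ and $\psi(h) = \sum_{(h)} h_{(1)}[f]\,h_{(2)}[g]$, checking agreement on the power sums $p_k$ (where both give $p_k[f]+p_k[g]$), and then specializing to $h = s_\lambda$ via $\Delta(s_\lambda) = \sum_\mu s_{\lambda/\mu} \otimes s_\mu$ — is a clean Hopf-algebraic packaging of the standard argument, and all the ingredients you invoke (multiplicativity of plethysm in the outer argument, additivity of $p_k[\cdot]$ in the inner argument, $1[u]=1$, $\Delta$ being an algebra map, the $p_k$ generating $\Lambda_{\mathbb{Q}}$) are legitimate; the passage through $\Lambda_{\mathbb{Q}}$ and back is harmless since both sides of the final identity visibly lie in $\overline{\Lambda}$, and the sum over $\mu$ is finite because $s_{\lambda/\mu}=0$ unless $\mu \subseteq \lambda$. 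You also correctly flag the only genuine convention issue, namely that the inner arguments $f,g$ may have constant terms, which is fine here because every outer argument is a (skew) Schur or power-sum function in $\Lambda$, matching the paper's stated well-definedness convention. In short: a valid, essentially standard proof of a result the paper merely cites.
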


For a variable $t$, let
\begin{alignat*}{4}
H(t) &= \sum_{n \geq 0} h_n t^n &&= \prod_{i \geq 1} \frac{1}{1 - x_i t} &&= \exp\left(\sum_{k \geq 1} \frac{p_k}{k}t^k\right)&&\in \Lambda \llbracket t \rrbracket \\
E(t) &= \sum_{n \geq 0} e_n t^n &&= \prod_{i \geq 1} (1 + x_i t) &&= \exp\left(\sum_{k \geq 1} \frac{p_k}{k} (-1)^{k-1} t^k\right) &&\in \Lambda \llbracket t \rrbracket.
\end{alignat*}
It is clear that $E(t) = \frac{1}{H(-t)}$. Let $H = H(1) = 1 + h_1 + h_2 + \cdots$ and $H_+ = h_1 + h_2 + h_3 + \cdots = H - 1$.

For any partitions $\lambda, \mu, \nu$, the \emph{Littlewood-Richardson coefficient} $c^\nu_{\lambda\mu}$ is the Hall inner product $\langle s_\nu, s_\lambda s_\mu \rangle$. Define $g_{\lambda\mu\nu} = \langle s_\nu, s_\lambda \ast s_\mu \rangle$.

\begin{defi}\label{definition:restriction}
    Let $\lambda, \mu$ be partitions and let $n = |\mu|$. Then, the Schur module $\mathbb{S}^\lambda \K^n$ can be considered as an $\Sym_n$-module, with $\Sym_n$ acting on $\K^n$ by permutation matrices. The \emph{restriction coefficient} \[r_\lambda^\mu = \dim \operatorname{Hom}_{\Sym_n}(V_\mu, \mathbb{S}^\lambda \K^n)\] is the multiplicity of the Specht module $V_\mu$ in $\mathbb{S}^\lambda \K^n$. 
\end{defi}

\section*{Acknowledgements}
The author thanks Rosa Orellana and Mike Zabrocki for helpful correspondence.

\section{The Frobenius transform: definition and basic properties}
\label{section:basics}
Recall from the introduction that it is a long-standing open problem to find a combinatorial interpretation of $r_\lambda^\mu$. As a potential way to approach this problem, we now define the Frobenius transform, which is the primary object of study in this paper.
\begin{defi}
    The \emph{Frobenius transform} is the abelian group homomorphism $\Fa \colon \Lambda \to \overline{\Lambda}$ defined on the basis $\{s_\lambda\}$ by \[\F{s_\lambda} = \sum_\mu r_{\lambda}^\mu s_\mu,\] where the sum is over all partitions~$\mu$.
\end{defi}
\begin{rema}\label{remark:halladjoint}
By a classical result of Littlewood \cite{MR1576896}, we have \[r_\lambda^\mu = \langle s_\lambda, s_\mu[H] \rangle.\] Hence,~$\Fa$ is adjoint to plethysm by~$H$ under the Hall inner product.
\end{rema}
Here is the reason for calling~$\Fa$ the Frobenius transform. Let $n \geq 0$ and let~$\lambda$ be any partition. Then $\mathbb{S}^\lambda \K^n$, considered as an $\Sym_n$-module, can be expressed as a direct sum of Specht modules:
\[\bigoplus_{|\mu| = n} r_{\lambda}^\mu V_\mu = \mathbb{S}^\lambda \K^n.\]
Taking the character of both sides and applying the Frobenius character map, we obtain \begin{equation}\label{equation:fslambdan1}\sum_{|\mu| = n} r_{\lambda}^\mu s_\mu = \ch_n(\chi_{\mathbb{S}^\lambda \K^n}).\end{equation} In other words, the degree~$n$ part of $\F{s_\lambda}$ is equal to the Frobenius character of $\mathbb{S}^\lambda \K^n$.

\begin{exam}\label{example:fer}
    Let $r \geq 0$. We will compute $\F{e_r}$. First, $e_r = s_{\lambda}$, where $\lambda = (1^r)$. Hence, for any~$n$, the degree~$n$ part of $\F{e_r}$ is the Frobenius character of \[\mathbb{S}^{\lambda} \K^n = \wedge^r \K^n = \operatorname{Ind}_{\Sym_r \times \Sym_{n - r}}^{\Sym_n} (V_{(1^r)} \otimes V_{(n-r)}),\] considered as an $\Sym_n$-module. Thus, it is equal to $e_r h_{n - r}$ \cite[Proposition~7.18.2]{MR4621625}. Taking the sum over all~$n$ yields $\F{e_r} = e_r \cdot H$.
\end{exam}

\subsection{The Frobenius transform and representations of combinatorial categories}\label{subsection:category}
The purpose of this subsection is to provide an alternate perspective on the Frobenius transform. This subsection is not essential to the proofs of our main results, so the reader may skip it. For a more complete introduction to the representation theory of categories, see Wiltshire-Gordon's 2016 PhD thesis \cite{MR3641124}.

In what follows, let $\Vect_\K$ be the category whose objects are vector spaces over~$\K$ and whose morphisms are linear transformations. (The ground field~$\K$ can be replaced by any algebraically closed field of characteristic $0$.)

\begin{defi}
    Let $\mathcal{C}$ be a category. A \emph{$\mathcal{C}$-module} (over the ground field~$\K$) is a functor $M[\bullet] \colon \mathcal{C} \to \Vect_\K$. The \emph{category of $\mathcal{C}$-modules} is the functor category $\Mod^\mathcal{C} = (\Vect_\K)^\mathcal{C}$.
    
    We say that a $\mathcal{C}$-module~$M$ is \emph{finite-dimensional} if $M[U]$ is finite-dimensional for all $U \in \operatorname{Ob}(\mathcal{C})$.

    If~$M$ and~$N$ are $\mathcal{C}$-modules, we may form the \emph{direct sum} $M \oplus N$ by $(M \oplus N)[U] = M[U] \oplus N[U]$.
\end{defi}

Let $\rmBij$ be the category whose objects are finite sets and whose morphisms are bijections, and let~$M$ be a $\rmBij$-module. (Previous authors have referred to $\rmBij$-modules as \emph{linear species} or \emph{tensor species} \cite{MR0927763, MR1132426}.) For $n \geq 0$, denote by $M[n]$ the vector space $M[[n]] = M[\{1, \ldots, n\}]$, which is an $\Sym_n$-module. Then~$M$ is uniquely determined, up to natural isomorphism, by the sequence $M[0], M[1], M[2], \ldots$ of symmetric group modules; namely, 
\begin{equation}\label{equation:bmodule}
M[U] = \K \Bij([n], U) \otimes_{\Sym_n} M[n]
\end{equation} for all finite sets~$U$ with $|U| = n$.
Additionally, if~$M$ is finite-dimensional, then we may form the series
\[\ch(M) = \sum_{n} \ch_n(\chi_{M[n]}) \in \overline{\Lambda},\] which we call the \emph{Frobenius character of~$M$}. It also uniquely determines~$M$ up to natural isomorphism.

Not every symmetric power series can be written in the form $\ch(M)$, where~$M$ is a finite-dimensional $\rmBij$-module. (Every symmetric power series can be written as the Frobenius character of a \emph{virtual $\rmBij$-module}, but we will not define virtual $\rmBij$-modules in this article.) However, it is still helpful to think of~$\ch$ as a partial correspondence between (isomorphism classes of) finite-dimensional $\rmBij$-modules and symmetric power series. Many concepts from the theory of symmetric power series have analogous concepts in the theory of $\rmBij$-modules. For example, in \Cref{definition:speciesproduct} below, we will define the \emph{product} $M \cdot N$ of two $\rmBij$-modules $M, N$. Via the Frobenius character, this is analogous to the product of symmetric power series in the sense that $\ch(M \cdot N) = \ch(M) \ch(N)$ (\Cref{proposition:moduleproduct}).

In what follows, we will find the construction in the theory of $\rmBij$-modules which is analogous to the Frobenius transform. More precisely, let~$M$ be a $\rmBij$-module such that $M_n = 0$ for all but finitely many~$n$. Then $\ch(M)$ is in fact a symmetric function, so its Frobenius transform $\F{\ch(M)}$ is well-defined. We will construct a $\rmBij$-module whose Frobenius character is $\F{\ch(M)}$. Before we do, we need the following definitions.

\begin{defi}[{\cite[Definition~2.6.1]{MR3641124}}]
    Let $F \colon \mathcal{C} \to \mathcal{D}$ be a functor. The \emph{pullback functor} $F^* \colon \Mod^\mathcal{D} \to \Mod^\mathcal{C}$ is given by $F^*M = M \circ F$.
\end{defi}
\begin{defi}[{\cite[Proposition~2.6.2]{MR3641124}}]
    Let $F \colon \mathcal{C} \to \mathcal{D}$ be a functor. The \emph{left Kan extension functor} $F_! \colon \Mod^\mathcal{C} \to \Mod^\mathcal{D}$ is the left adjoint to the pullback $F^*$, if it exists:
    \[\Hom_{\Mod^\mathcal{D}}(F_! M, N) = \Hom_{\Mod^\mathcal{C}}(M, F^*N).\]
\end{defi}

Let $\rmFun$ be the category whose objects are finite sets and whose morphisms are functions. Clearly, $\rmBij$ is a subcategory of $\rmFun$; let $\iota \colon \rmBij \to \rmFun$ be the inclusion functor.

\begin{prop}\label{proposition:leftkanextension}
    The left Kan extension functor $\iota_! \colon \Mod^\rmBij \to \Mod^\rmFun$ exists. Moreover, for every $\rmBij$-module~$M$, the left Kan extension $\iota_! M$ is given on finite sets~$U$ by \[(\iota_! M)[U] = \bigoplus_{n} \K U^n \otimes_{\Sym_n} M[n].\] Here $\K U^n \otimes_{\Sym_n} M[n]$ denotes the quotient of the tensor product $\K U^n \otimes M[n]$ by the relation that $a \otimes w b \sim aw \otimes b$ for all $a \in \K U^n$, $b \in M_n$, and $w \in \Sym_n$.
\end{prop}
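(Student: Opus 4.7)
The plan is to recognize the proposed formula as the explicit coend computing the left Kan extension, and verify directly that it satisfies the required universal property. Existence of $\iota_!$ is automatic from the cocompleteness of $\Vect_\K$ and the essential smallness of $\rmBij$; the real work is checking the explicit formula.

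First I would verify that the formula defines a $\rmFun$-module. For a function $f\colon U \to V$, the map $(u_1,\ldots,u_n)\mapsto (f(u_1),\ldots,f(u_n))$ is $\Sym_n$-equivariant for the coordinate-permutation action on $\K U^n$ and $\K V^n$, so it descends to a linear map $\K U^n \otimes_{\Sym_n} M[n] \to \K V^n \otimes_{\Sym_n} M[n]$; summing over~$n$ gives $(\iota_! M)[f]$. Functoriality in~$f$ is immediate.

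Next I would exhibit mutually inverse natural bijections witnessing the adjunction $\Hom_{\Mod^\rmFun}(\iota_! M, N)\cong\Hom_{\Mod^\rmBij}(M,\iota^* N)$. Given a $\rmBij$-natural transformation $\psi\colon M \to \iota^* N$, define $\phi\colon \iota_! M\to N$ by
\[\phi_U\bigl([(u_1,\ldots,u_n)\otimes b]\bigr) = N[f_{\vec u}]\bigl(\psi_{[n]}(b)\bigr),\]
where $f_{\vec u}\colon [n]\to U$ sends $i\mapsto u_i$. Well-definedness on coinvariants uses the naturality of~$\psi$: for $w\in\Sym_n=\Bij([n],[n])$, fixing the convention $f_{\vec u\cdot w}=f_{\vec u}\circ w$, one computes $N[f_{\vec u}\circ w]\psi_{[n]}(b)=N[f_{\vec u}]N[w]\psi_{[n]}(b)=N[f_{\vec u}]\psi_{[n]}(M[w]b)$, matching the defining relation $\vec u w\otimes b\sim \vec u\otimes wb$. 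Naturality of~$\phi$ along $\rmFun$-morphisms is then immediate from functoriality of~$N$.

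Conversely, given $\phi\colon \iota_! M\to N$, use the identification $M[U]\cong\K\Bij([n],U)\otimes_{\Sym_n} M[n]$ from~\eqref{equation:bmodule} to embed $M[U]$ into $(\iota_! M)[U]$; composing with $\phi_U$ yields a $\rmBij$-natural $\psi_U\colon M[U]\to N[U]$. To see the round-trip is the identity, observe that in the coend every generator $[\vec u\otimes b]\in(\iota_! M)[U]$ equals $(\iota_! M)[f_{\vec u}]\bigl([(1,\ldots,n)\otimes b]\bigr)$, where the latter lies in the bijective part corresponding to $\mathrm{id}_{[n]}$; naturality of~$\phi$ along $f_{\vec u}$ then forces the explicit formula, and the bijective part recovers $\psi$ directly. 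The main obstacle is purely bookkeeping: correctly aligning the right action of $\Sym_n$ on $\K U^n$, the left action on $M[n]$, and the naturality square for~$\psi$ at a permutation. Once the convention $f_{\vec u\cdot w}=f_{\vec u}\circ w$ is pinned down, the coinvariance relation corresponds exactly to $\psi$'s naturality, and the remaining verifications are routine.
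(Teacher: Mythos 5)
Your proof is correct, but it takes a different route from the paper: the paper disposes of this proposition in one line by citing the general formula for left Kan extensions from Wiltshire-Gordon's thesis (Proposition~2.6.7 of \cite{MR3641124}), which specializes to the stated coend because $\rmBij$ is a groupoid and the $\rmFun$-morphisms $[n] \to U$ form the right $\Sym_n$-set $U^n$. You instead verify the universal property by hand: you check that the formula defines a $\rmFun$-module, build the adjunction bijection $\Hom_{\Mod^\rmFun}(\iota_!M, N) \cong \Hom_{\Mod^\rmBij}(M, \iota^*N)$ explicitly, and confirm the two round trips, with the key bookkeeping point (the convention $f_{\vec u \cdot w} = f_{\vec u} \circ w$ aligning the right $\Sym_n$-action on $\K U^n$ with naturality of $\psi$ at permutations) handled correctly; the unit $M \to \iota^*\iota_! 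M$ you use is exactly the inclusion of the ``bijective part'' via \eqref{equation:bmodule}. The citation buys brevity and places the statement in a general framework (the same reference is reused for $\kappa_!$ in the first proof of \Cref{proposition:fsur}); your argument buys self-containedness and makes the unit and the coinvariance relation explicit, which is the structure actually exploited later. The only loose ends in your write-up are routine: naturality of the bijection in $M$ and $N$, and the remark that the direct verification already gives existence, making the appeal to cocompleteness redundant.
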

\begin{proof}
    This follows from \cite[Proposition~2.6.7]{MR3641124}.
\end{proof}
\begin{rema}
    Joyal refers to $\iota_! M \colon \rmFun \to \Vect_\K$ as the \emph{analytic functor} corresponding to the tensor species~$M$ \cite[Definition~4.2]{MR0927763}.
\end{rema}
We are now ready to show that $\iota^* \iota_!$ is the construction analogous to the Frobenius transform. 
\begin{prop}\label{proposition:kanfrobenius}
    Let~$M$ be a finite-dimensional $\rmBij$-module such that $M[n] = 0$ for all but finitely many~$n$. Then $\iota^*\iota_! M$ is finite-dimensional and $\F{\ch(M)} = \ch(\iota^*\iota_! M)$.
\end{prop}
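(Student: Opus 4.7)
The plan is to reduce to the case where~$M$ is supported in a single degree and concentrated on a single Specht module, and then evaluate both sides directly via Schur--Weyl duality.

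First, both operations $M \mapsto \iota^*\iota_! M$ and $M \mapsto \F{\ch M}$ are additive under direct sums of $\rmBij$-modules: $\iota_!$ is a left adjoint and therefore preserves colimits, $\iota^*$ preserves direct sums, $\ch$ is additive, and~$\Fa$ is an abelian group homomorphism. Because the hypotheses on~$M$ force it to be a finite direct sum of the $\rmBij$-modules $M^\lambda$ defined by $M^\lambda[|\lambda|] = V_\lambda$ and $M^\lambda[n] = 0$ for $n \neq |\lambda|$, we may assume $M = M^\lambda$ for a fixed partition~$\lambda$. Then $\ch(M^\lambda) = s_\lambda$, so the claim reduces to $\ch(\iota^*\iota_! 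M^\lambda) = \F{s_\lambda}$.

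Next, I would compute $(\iota_! M^\lambda)[N]$ using \Cref{proposition:leftkanextension}. Only the summand with $n = |\lambda|$ contributes, giving $\K [N]^n \otimes_{\Sym_n} V_\lambda$. Identifying $\K [N]^n$ with $(\K^N)^{\otimes n}$ (where~$\Sym_n$ permutes tensor factors and $GL_N$ acts diagonally), Schur--Weyl duality yields
\[
(\K^N)^{\otimes n} \;\cong\; \bigoplus_{\nu : |\nu| = n,\ \ell(\nu) \leq N} \mathbb{S}^\nu \K^N \otimes V_\nu
\]
as a $GL_N \times \Sym_n$-module. Tensoring with $V_\lambda$ over~$\Sym_n$, only the $\nu = \lambda$ term survives: in characteristic zero the Specht modules are self-dual, so the coinvariants $V_\nu \otimes_{\Sym_n} V_\lambda$ coincide with the invariants $(V_\nu \otimes V_\lambda)^{\Sym_n} \cong \Hom_{\Sym_n}(V_\nu, V_\lambda)$, which is one-dimensional if $\nu = \lambda$ and zero otherwise. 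Hence $(\iota_! M^\lambda)[N] \cong \mathbb{S}^\lambda \K^N$ as a $GL_N$-module, and in particular is finite-dimensional.

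Pulling back along~$\iota$ simply remembers the $\Sym_N$-action by permutation matrices, so $(\iota^*\iota_! M^\lambda)[N]$ is $\mathbb{S}^\lambda \K^N$ viewed as an $\Sym_N$-module. By \eqref{equation:fslambdan1}, its Frobenius character is the degree-$N$ part of $\F{s_\lambda}$; summing over~$N$ gives $\ch(\iota^*\iota_! M^\lambda) = \F{s_\lambda}$, as required. The main obstacle to watch is bookkeeping: one must check that the left/right $\Sym_n$-conventions built into \Cref{proposition:leftkanextension} are compatible with the $GL_N \times \Sym_n$-bimodule structure that Schur--Weyl produces. Self-duality of Specht modules makes this routine, but without that care the computation $V_\nu \otimes_{\Sym_n} V_\lambda \cong \delta_{\nu\lambda}\K$ needs a careful justification.
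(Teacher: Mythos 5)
Your proposal is correct and follows essentially the same route as the paper: reduce by additivity to the module $M_\lambda$ concentrated on a single Specht module, compute $(\iota^*\iota_! M_\lambda)[m] = (\K^m)^{\otimes|\lambda|} \otimes_{\Sym_{|\lambda|}} V_\lambda$ via \Cref{proposition:leftkanextension}, identify it with $\mathbb{S}^\lambda \K^m$ by Schur--Weyl duality, and conclude via \eqref{equation:fslambdan1}. Your explicit computation of the coinvariants $V_\nu \otimes_{\Sym_n} V_\lambda \cong \delta_{\nu\lambda}\K$ is just a careful unpacking of the Schur--Weyl step that the paper cites in one line.
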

\begin{proof}
    For any partition~$\lambda$, define the $\rmBij$-module $M_\lambda$ by \[(M_\lambda)[n] = \begin{cases}
        V_\lambda & \mbox{if $n = |\lambda|$;}\\
        0 & \mbox{otherwise,}
    \end{cases}\] extending to all of $\rmBij$ using \eqref{equation:bmodule}.
    
    We have that~$M$ can be written as a direct sum of the $M_\lambda$. Since $\iota^*$ and $\iota_!$ preserve direct sums, it is enough to prove the proposition for $M = M_\lambda$. In this case, $\ch(M) = \ch_n(\chi_\lambda) = s_\lambda$.

    On the other hand, by \Cref{proposition:leftkanextension}, we have for $m \geq 0$ that 
    \begin{align*}
        (\iota^*\iota_! M)[m] &= \bigoplus_{n} \K [m]^n \otimes_{\Sym_n} M[n] \\
        &= \bigoplus_{n} (\K^m)^{\otimes n} \otimes_{\Sym_n} M[n] \\
        &= (\K^m)^{\otimes |\lambda|} \otimes_{\Sym_{|\lambda|}} V_\lambda.
    \end{align*}
    By Schur-Weyl duality, this is isomorphic as a $GL_m$-module to $\mathbb{S}^\lambda \K^m$. Hence, as a $\Sym_m$-module, it decomposes into irreducibles as
    \[(\iota^* \iota_! M)[m] = \bigoplus_{|\mu| = m} r^\mu_\lambda V_\mu.\] It follows that \[\ch(\iota^* \iota_! M) = \sum_{\mu} r^{\mu}_\lambda s_\mu = \F{s_\lambda} = \F{\ch(M)}\] as desired.
\end{proof}
We will occasionally make use of the following definition in later remarks.
\begin{defi}[{\cite[Section~4.1]{MR0927763}}]\label{definition:speciesproduct}
    Let $M, N$ be $\rmBij$-modules. Define the \emph{product} $M \cdot N$ to be the $\rmBij$-module given by \[(M \cdot N)[U] = \bigoplus_{\substack{U_1 \cup U_2 = U \\ U_1 \cap U_2 = \emptyset}} M[U_1] \otimes N[U_2].\]
\end{defi}
\begin{prop}[{\cite[Proposition~2.1]{MR1132426}}]
    \label{proposition:moduleproduct}
    Let~$M$,~$N$ be finite-dimensional $\rmBij$-modules. Then \[\ch(M \cdot N) = \ch(M) \ch(N).\]
\end{prop}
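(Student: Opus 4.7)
The plan is to reduce the statement to the classical fact that induction of representations from a Young subgroup $\Sym_k \times \Sym_{n-k} \hookrightarrow \Sym_n$ corresponds under the Frobenius character map to multiplication of symmetric functions. Concretely, I would first rewrite the degree~$n$ piece of the product species in a more tractable form: grouping the summands in \Cref{definition:speciesproduct} by the size of $U_1$, I would argue that
\[
(M \cdot N)[n] \;\cong\; \bigoplus_{k=0}^{n} \mathrm{Ind}_{\Sym_k \times \Sym_{n-k}}^{\Sym_n}\bigl(M[k] \otimes N[n-k]\bigr)
\]
as $\Sym_n$-modules. The justification is that $\Sym_n$ acts on $(M \cdot N)[n] = \bigoplus_{U_1 \sqcup U_2 = [n]} M[U_1] \otimes N[U_2]$ by permuting the ordered partitions $(U_1, U_2)$, the orbit of $([k], \{k+1,\ldots,n\})$ is the collection of all such pairs with $|U_1| = k$, and its stabilizer is exactly $\Sym_k \times \Sym_{n-k}$; functoriality of $M, N$ on bijections then identifies the isotypic summand with the induced module.

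Next I would invoke the standard identity
\[
\ch_n\bigl(\mathrm{Ind}_{\Sym_k \times \Sym_{n-k}}^{\Sym_n}(V \otimes W)\bigr) \;=\; \ch_k(V)\,\ch_{n-k}(W)
\]
(which is the usual multiplicativity of the Frobenius characteristic, proved via Frobenius reciprocity or directly from the definition in terms of cycle types). Applying this to each summand yields
\[
\ch_n\bigl((M \cdot N)[n]\bigr) \;=\; \sum_{k=0}^{n} \ch_k(M[k])\,\ch_{n-k}(N[n-k]).
\]

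Finally, summing over $n \geq 0$ and recognizing the right-hand side as a Cauchy product, I would conclude
\[
\ch(M \cdot N) \;=\; \sum_{n \geq 0}\sum_{k=0}^{n} \ch_k(M[k])\,\ch_{n-k}(N[n-k]) \;=\; \ch(M)\,\ch(N),
\]
as desired. The main (and essentially only) obstacle is cleanly setting up the $\Sym_n$-equivariant isomorphism with the induced module; once that is in hand, the rest is the classical multiplicativity of~$\ch$ combined with a Cauchy-product rearrangement, both of which are routine.
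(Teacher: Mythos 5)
Your proof is correct. The paper does not prove this proposition at all---it simply cites it from the literature on tensor species---and your argument (decomposing the degree-$n$ piece of $M \cdot N$ by the size of $U_1$, identifying each block as $\mathrm{Ind}_{\Sym_k \times \Sym_{n-k}}^{\Sym_n}(M[k] \otimes N[n-k])$, and invoking multiplicativity of $\ch$ on induction products followed by a Cauchy-product rearrangement) is precisely the standard proof that the cited reference supplies, so there is nothing to compare beyond noting that your write-up correctly fills in the omitted details.
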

\subsection{The Frobenius transform and evaluation at roots of unity}\label{subsection:rootsofunity}
Let us now describe the expansion of $\F{s_\lambda}$ in the power sum basis. In order to simplify the description, we will do it one degree at a time. In this subsection, we will write $f_n$ to mean the degree~$n$ part of~$f$.

By \eqref{equation:fslambdan1}, we have
\begin{equation}\label{equation:fslambdan2}
(\F{s_\lambda})_n = \ch_n(\chi_{\mathbb{S}^\lambda \K^n}) = \frac{1}{n!} \sum_{w \in \Sym_n} \chi_{\mathbb{S}^\lambda \K^n}(w) p_{c(w)}.
\end{equation}
It is well-known \cite[Section~8.3]{MR1464693} that the character of $\mathbb{S}^\lambda \K^n$ as a $GL_n$-module is the Schur function $s_\lambda$ itself. In other words, if $g \in GL_n(\K)$ has eigenvalues $x_1, \ldots, x_n$, then $\chi_{\mathbb{S}^\lambda \K^n}(g)$ is equal to the evaluation $s_\lambda(x_1, \ldots, x_n)$.

For $w \in \Sym_n$, we have $\chi_{\mathbb{S}^\lambda \K^n}(w) = s_\lambda(x_1, \ldots, x_n)$, where $x_1, \ldots, x_n$ are the eigenvalues of the permutation matrix $P_w$. Let $\mu = (\mu_1, \ldots, \mu_\ell)$ be the cycle type of~$w$. Then the eigenvalues of $P_w$ are the roots of unity
\begin{gather*}
    1, \exp\left(\frac{2 \pi i}{\mu_1}\right), \exp\left(\frac{4 \pi i}{\mu_1}\right), \ldots, \exp\left(\frac{2(\mu_1 - 1) \pi i}{\mu_1}\right), \\
    \vdots \\
    1, \exp\left(\frac{2 \pi i}{\mu_\ell}\right), \exp\left(\frac{4 \pi i}{\mu_\ell}\right), \ldots, \exp\left(\frac{2(\mu_\ell - 1) \pi i}{\mu_\ell}\right).
\end{gather*}
Following Orellana and Zabrocki \cite{MR4275829, MR4295089}, let $\Xi_\mu \in \K^n$ denote this sequence. We have that $\chi_{\mathbb{S}^\lambda \K^n}(w)$ is the result of evaluating $s_\lambda$ at these roots of unity: \[\chi_{\mathbb{S}^\lambda \K^n}(w) =
s_\lambda\left(\Xi_\mu\right).\]
Now, let us group the terms on the right-hand side of \eqref{equation:fslambdan2} according to the cycle type $\mu = c(w)$. The number of permutations $w \in \Sym_n$ with cycle type~$\mu$ is $\frac{n!}{z_\mu}$, so 
\begin{align*}(\F{s_\lambda})_n &= \frac{1}{n!} \sum_{w \in \Sym_n} \chi_{\mathbb{S}^\lambda \K^n}(w) p_{c(w)} \\
&= \frac{1}{n!} \sum_{|\mu| = n} \frac{n!}{z_\mu}s_\lambda(\Xi_\mu)p_\mu \\
&= \sum_{|\mu| = n} s_\lambda(\Xi_\mu) \frac{p_\mu}{z_\mu}.\end{align*}
Taking the sum over all~$n$, we obtain \[\F{s_\lambda} = \sum_{\mu} s_\lambda(\Xi_\mu) \frac{p_\mu}{z_\mu}.\]
Finally, by linearity, we may extend this result to any $f \in \Lambda$. We have proved the following.
\begin{prop}\label{proposition:rootsofunityformula}
    Let $f \in \Lambda$. Then \[\F{f} = \sum_{\mu} f(\Xi_\mu) \frac{p_\mu}{z_\mu}.\]
\end{prop}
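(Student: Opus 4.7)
The plan is to reduce to the case $f = s_\lambda$ by linearity, since the claimed identity is linear in $f$ and $\{s_\lambda\}$ is a $\Z$-basis of $\Lambda$. For Schur functions, I would start from \eqref{equation:fslambdan1}, which says that the degree-$n$ part of $\F{s_\lambda}$ coincides with the Frobenius character of the $\Sym_n$-module $\mathbb{S}^\lambda \K^n$. Expanding via the definition of $\ch_n$ yields
\[
(\F{s_\lambda})_n = \frac{1}{n!}\sum_{w \in \Sym_n} \chi_{\mathbb{S}^\lambda \K^n}(w)\, p_{c(w)},
\]
so the task becomes to identify $\chi_{\mathbb{S}^\lambda \K^n}(w)$ explicitly.

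The key step is the observation that $\mathbb{S}^\lambda \K^n$ is actually a $GL_n(\K)$-module and its $GL_n$-character, as a class function on diagonalizable matrices, is the Schur polynomial $s_\lambda$ evaluated at the eigenvalues. Restricting to the permutation subgroup, if $w \in \Sym_n$ has cycle type $\mu = (\mu_1,\ldots,\mu_\ell)$, then the eigenvalues of the permutation matrix $P_w$ are precisely the $\mu_i$-th roots of unity for $i=1,\ldots,\ell$, i.e.\ the entries of $\Xi_\mu$. Hence $\chi_{\mathbb{S}^\lambda \K^n}(w) = s_\lambda(\Xi_\mu)$, which depends only on the cycle type of $w$.

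From here, I would group permutations by cycle type, using the fact that there are $n!/z_\mu$ permutations of cycle type $\mu$ in $\Sym_n$, to collapse the sum to $\sum_{|\mu|=n} s_\lambda(\Xi_\mu)\, p_\mu/z_\mu$. Summing over $n$ gives the desired identity for Schur functions, and linearity finishes the proof.

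The only real obstacle is ensuring the $GL_n$-character interpretation of Schur functions is invoked correctly and that the permutation matrix $P_w$ indeed has the claimed eigenvalues; both are classical facts (block-diagonalizing $P_w$ into cyclic blocks of sizes $\mu_1,\ldots,\mu_\ell$ and diagonalizing each cyclic block over $\K$). Once those ingredients are in place, the computation is purely bookkeeping.
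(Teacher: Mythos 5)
Your proposal is correct and follows essentially the same route as the paper: reduce to $f = s_\lambda$ by linearity, use \eqref{equation:fslambdan1}, identify $\chi_{\mathbb{S}^\lambda \K^n}(w)$ as $s_\lambda(\Xi_\mu)$ via the $GL_n$-character of the Schur module and the eigenvalues of the permutation matrix, then group by cycle type using $n!/z_\mu$ and sum over $n$. No gaps.
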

In the notation of Orellana and Zabrocki \cite{MR4275829}, this proposition can be written as \[\F{f} = \phi_0(f) + \phi_1(f) + \phi_2(f) + \cdots.\]

\subsection{The Frobenius transform and the Kronecker product}
The Frobenius transform relates the ordinary product of symmetric functions to the Kronecker product in the following way.
\begin{prop}[{cf. \cite[Section~2.3]{MR4275829}}]
\label{proposition:kronecker}
Let $f, g \in \Lambda$. Then $\F{fg} = \F{f} \ast \F{g}$.
\end{prop}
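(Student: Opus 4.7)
The plan is to use \Cref{proposition:rootsofunityformula}, which gives an explicit formula for the Frobenius transform in the power sum basis. Since $\{p_\mu\}$ diagonalizes the Kronecker product via $p_\mu \ast p_\nu = \delta_{\mu\nu} z_\mu p_\mu$, this basis is well-adapted to computing the right-hand side.

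First I would apply \Cref{proposition:rootsofunityformula} to write
\[
\F{f} \ast \F{g} = \left(\sum_\mu f(\Xi_\mu)\frac{p_\mu}{z_\mu}\right) \ast \left(\sum_\nu g(\Xi_\nu)\frac{p_\nu}{z_\nu}\right) = \sum_{\mu,\nu} f(\Xi_\mu) g(\Xi_\nu) \frac{p_\mu \ast p_\nu}{z_\mu z_\nu}.
\]
Using $p_\mu \ast p_\nu = \delta_{\mu\nu} z_\mu p_\mu$, the double sum collapses to $\sum_\mu f(\Xi_\mu) g(\Xi_\mu) \frac{p_\mu}{z_\mu}$. Then I would observe that evaluation at $\Xi_\mu$ is a ring homomorphism $\Lambda \to \K$, so $f(\Xi_\mu) g(\Xi_\mu) = (fg)(\Xi_\mu)$, and a final application of \Cref{proposition:rootsofunityformula} identifies the result with $\F{fg}$.

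There is essentially no obstacle; this is a three-line computation once the power sum formula is in hand. The only care needed is that both sides lie in $\overline{\Lambda}$ rather than $\Lambda$, so one should invoke the continuous bilinear extension of the Kronecker product to $\overline{\Lambda} \times \overline{\Lambda}$ mentioned in the preliminaries; since each degree of $\F{f}$ and $\F{g}$ is already a finite $\Z$-linear combination of $p_\mu$'s, there are no convergence issues.

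As an alternative, one could give a representation-theoretic proof degree-by-degree: by bilinearity it suffices to treat $f = s_\lambda$, $g = s_\mu$, and then $s_\lambda s_\mu = \sum_\nu c_{\lambda\mu}^\nu s_\nu$ corresponds via Littlewood--Richardson to the $GL_n$-module decomposition $(\mathbb{S}^\lambda \K^n) \otimes (\mathbb{S}^\mu \K^n) \cong \bigoplus_\nu c_{\lambda\mu}^\nu \mathbb{S}^\nu \K^n$. Restricting this isomorphism to $\Sym_n$ and applying the identity $\ch_n(\chi_1 \chi_2) = \ch_n(\chi_1) \ast \ch_n(\chi_2)$ for characters of $\Sym_n$ recorded in the preliminaries then yields the degree~$n$ part of the claim. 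I would include the power sum proof as the main argument since it is shorter and avoids invoking Littlewood--Richardson.
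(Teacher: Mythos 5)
Your main argument is exactly the paper's second proof: apply \Cref{proposition:rootsofunityformula} to both factors, use $p_\mu \ast p_\nu = \delta_{\mu\nu} z_\mu p_\mu$ to collapse the double sum, and note that evaluation at $\Xi_\mu$ is multiplicative; this is correct as written. (Your sketched representation-theoretic alternative is also sound and is close in spirit to the paper's first, category-theoretic proof via $\iota^*\iota_!$, though the paper phrases it through $\rmBij$-modules rather than tensoring Schur modules directly.)
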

We provide two different proofs of \Cref{proposition:kronecker}: a category-theoretic proof using \Cref{proposition:kanfrobenius} and a direct computational proof using \Cref{proposition:rootsofunityformula}.
\begin{proof}[First proof]
    Because both sides of the desired equation are bilinear, we may assume that there exist $\rmBij$-modules $M, N$ such that $\ch(M) = f$ and $\ch(N) = g$. By \Cref{proposition:moduleproduct}, we have $\ch(M \cdot N) = \ch(M) \ch(N) = fg$, where $M \cdot N$ is the product as defined in \Cref{definition:speciesproduct}.
    
    By \cite[Equation~2.1(ii)]{MR0927763}, we have $\iota^* \iota_! (M \cdot N) = (\iota^* \iota_! M) \otimes_{\K} (\iota^* \iota_! N)$, where $\otimes_\K$ denotes the object-wise tensor product of $\rmBij$-modules. Applying~$\ch$ to both sides, we obtain $\F{fg} = \F{f} \ast \F{g}$, as desired.
\end{proof}
\begin{proof}[Second proof]
By \Cref{proposition:rootsofunityformula}, we have
\begin{align*}
    \F{f} \ast \F{g} &= \left(\sum_{\mu} f(\Xi_\mu) \frac{p_\mu}{z_\mu}\right) \ast \left(\sum_{\mu} g(\Xi_\mu) \frac{p_\mu}{z_\mu}\right) \\
    &= \sum_{\mu} f(\Xi_\mu)g(\Xi_\mu) \frac{p_\mu}{z_\mu} \\
    &= \F{fg}
\end{align*}
as desired.
\end{proof}

\begin{coro}
\label{corollary:productcoefficients}
Let $\lambda, \mu, \nu$ be partitions. Then
\[\sum_{\nu'} r^{\nu}_{\nu'} c^{\nu'}_{\lambda\mu} = \sum_{\lambda', \mu'} r^{\lambda'}_{\lambda} r^{\mu'}_{\mu} g_{\lambda'\mu'\nu}.\]
\end{coro}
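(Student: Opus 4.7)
The plan is to apply \Cref{proposition:kronecker} directly to $f = s_\lambda$ and $g = s_\mu$ and then read off the coefficient of $s_\nu$ on both sides.

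First I would expand the left-hand side $\F{s_\lambda s_\mu}$. Using the Littlewood-Richardson rule $s_\lambda s_\mu = \sum_{\nu'} c^{\nu'}_{\lambda\mu} s_{\nu'}$ together with the definition $\F{s_{\nu'}} = \sum_\nu r^\nu_{\nu'} s_\nu$ of the Frobenius transform, I get
\[
\F{s_\lambda s_\mu} = \sum_{\nu} \left( \sum_{\nu'} r^\nu_{\nu'} c^{\nu'}_{\lambda\mu} \right) s_\nu.
\]

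Next I would expand the right-hand side $\F{s_\lambda} \ast \F{s_\mu}$. Expanding each factor in the Schur basis via the definition of~$\Fa$ and using $s_{\lambda'} \ast s_{\mu'} = \sum_\nu g_{\lambda'\mu'\nu} s_\nu$, I obtain
\[
\F{s_\lambda} \ast \F{s_\mu} = \sum_\nu \left( \sum_{\lambda',\mu'} r^{\lambda'}_\lambda r^{\mu'}_\mu g_{\lambda'\mu'\nu} \right) s_\nu.
\]

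Since \Cref{proposition:kronecker} gives $\F{s_\lambda s_\mu} = \F{s_\lambda} \ast \F{s_\mu}$, equating the coefficient of $s_\nu$ on both sides yields the claimed identity. There is no real obstacle here; the whole corollary is just the bookkeeping version of the preceding proposition, so the only thing to verify is that the Kronecker product is continuous enough to be distributed across the (finite, for each degree) Schur expansions of $\F{s_\lambda}$ and $\F{s_\mu}$, which is immediate from bilinearity.
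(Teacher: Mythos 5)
Your proof is correct and is essentially the paper's own argument: the paper likewise applies \Cref{proposition:kronecker} with $f = s_\lambda$, $g = s_\mu$ and extracts the coefficient of $s_\nu$ (via the Hall inner product), which is exactly your coefficient comparison. You have merely written out the Schur-basis bookkeeping that the paper leaves implicit.
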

\begin{proof}
In \Cref{proposition:kronecker}, take $f = s_\lambda$ and $g = s_\mu$. Then take the Hall inner product of both sides with $s_\nu$.
\end{proof}

\subsection{The surjective Frobenius transform}
\begin{prop}
\label{proposition:fsur}
Let $f \in \Lambda$. Then there exists a symmetric function $\FSur{f} \in \Lambda$ such that $\F{f} = \FSur{f} \cdot H$. Moreover, $\FSur{f}$ has the same degree and leading term as~$f$.
\end{prop}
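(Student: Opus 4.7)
The plan is to decompose the Schur module $\mathbb{S}^\lambda \K^n$ as an $\Sym_n$-module according to the image of the underlying function $[|\lambda|] \to [n]$, and to read off the factorization $\FSur{s_\lambda} \cdot H$ from the resulting formula.

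By $\Z$-linearity, I may assume $f = s_\lambda$ for a single partition $\lambda$; set $d = |\lambda|$. The main input is the Schur-Weyl identification $\mathbb{S}^\lambda \K^n \cong (\K^n)^{\otimes d} \otimes_{\Sym_d} V_\lambda$ (with $\Sym_d$ permuting tensor factors and $\Sym_n$ acting diagonally on $\K^n$). Every function $\phi \colon [d] \to [n]$ factors uniquely as a surjection $[d] \twoheadrightarrow \mathrm{im}(\phi)$ followed by the inclusion $\mathrm{im}(\phi) \hookrightarrow [n]$, so, grouping the monomial basis of $(\K^n)^{\otimes d}$ by the size $k$ of the image and using that $\Sym_n$ acts transitively on $k$-subsets of $[n]$ with stabilizer $\Sym_k \times \Sym_{n-k}$, I obtain a $\Sym_n \times \Sym_d$-equivariant isomorphism
\[
    (\K^n)^{\otimes d} \;\cong\; \bigoplus_{k=0}^{\min(n,d)} \operatorname{Ind}_{\Sym_k \times \Sym_{n-k}}^{\Sym_n} \bigl( \K \Sur([d],[k]) \otimes \K \bigr),
\]
where $\Sym_d$ acts on $\K \Sur([d],[k])$ by permuting the domain $[d]$.

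Tensoring over $\Sym_d$ with $V_\lambda$ and setting $W_\lambda^k := \K \Sur([d],[k]) \otimes_{\Sym_d} V_\lambda$, this becomes
\[
    \mathbb{S}^\lambda \K^n \;\cong\; \bigoplus_{k=0}^{\min(n,d)} \operatorname{Ind}_{\Sym_k \times \Sym_{n-k}}^{\Sym_n}(W_\lambda^k \otimes \K).
\]
Applying the Frobenius character map (which converts induction from $\Sym_k \times \Sym_{n-k}$ into multiplication in $\Lambda$), the degree-$n$ component of $\F{s_\lambda}$ equals $\sum_{k=0}^{\min(n,d)} \ch_k(W_\lambda^k) \cdot h_{n-k}$. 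Summing over $n$ and interchanging the order of summation yields
\[
    \F{s_\lambda} \;=\; \left(\sum_{k=0}^{d} \ch_k(W_\lambda^k)\right) \cdot H,
\]
so I set $\FSur{s_\lambda} := \sum_{k=0}^{d} \ch_k(W_\lambda^k) \in \Lambda$.

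The degree of $\FSur{s_\lambda}$ is at most $d$ by construction, and for $k = d$ the set $\Sur([d],[d])$ equals $\Sym_d$, giving $W_\lambda^d \cong V_\lambda$ and hence degree-$d$ component $\ch_d(V_\lambda) = s_\lambda$, the leading term of $f$. The one bookkeeping point to check is that the image-grouping of $(\K^n)^{\otimes d}$ is equivariant for both the $\Sym_n$- and $\Sym_d$-actions, but this is immediate because these two actions operate on independent coordinates of a function $\phi\colon[d]\to[n]$ and the image of $\phi$ is invariant under precomposition with permutations of $[d]$.
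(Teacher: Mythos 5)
Your proof is correct, and it is essentially the paper's first (category-theoretic) proof with the Kan-extension language unwound: your grouping of the functions $[d] \to [n]$ by their image is exactly the paper's decomposition $U^n = \sum_{V \subseteq U} \Sur([n], V)$, and your module $W_\lambda^k = \K\Sur([d],[k]) \otimes_{\Sym_d} V_\lambda$ is precisely $(\kappa^* \kappa_! M_\lambda)[k]$ there, with Schur--Weyl duality playing the same role it plays in \Cref{proposition:kanfrobenius}. Your $k = d$ leading-term check likewise mirrors the paper's observation that $(\kappa^* \kappa_! M)[m] \cong M[m]$ for $m \geq \deg f$, since $\Sur([m],[m]) = \Sym_m$.
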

For example, in \Cref{example:fer} we showed that $\F{e_r} = e_r \cdot H$. Thus, $\FSur{e_r} = e_r$. (Note, however, that in general, $\Fa_{\Sur}$ does not preserve the property of being homogeneous.) Again, we provide two separate proofs of this proposition: a category-theoretic proof and a direct computational proof.
\begin{proof}[First proof]
Because both sides of the desired equation are linear in~$f$, we may assume that there exists a $\rmBij$-module~$M$ such that $\ch(M) = f$.

Let $\rmSur$ be the category whose objects are finite sets and whose morphisms are surjections, and let $\kappa \colon \rmBij \to \rmSur$ be the inclusion functor. Let $\kappa_! \colon \Mod^\rmBij \to \Mod^\rmFun$ be the left Kan extension functor along~$\kappa$, which exists by \cite[Proposition~2.6.7]{MR3641124}. We claim that the choice $\FSur{f} = \ch(\kappa^* \kappa_!M)$ satisfies the desired properties. For this, we will show that there is a natural isomorphism
\begin{equation}\label{eq:iotakappa}
    \iota^* \iota_! M = \kappa^* \kappa_! M \cdot \K E,
\end{equation}
where~$\iota$ and~$\cdot$ are defined as in \Cref{subsection:category} and $\K E$ is the $\rmBij$-module given by $(\K E)[U] = \K$ for all finite sets~$U$.

By \Cref{proposition:leftkanextension}, we have
\begin{equation}\label{eq:iotaiota}
    (\iota^* \iota_! M)[U] = \bigoplus_n \K U^n \otimes_{\Sym_n} M[n].
\end{equation}
Think of $U^n$ as the set of all functions $[n] \to U$. By grouping those functions by their image~$V$, we may write $U^n$ as a disjoint union:
\[U^n = \sum_{V \subseteq U} \Sur([n], V).\]
Substituting into \eqref{eq:iotaiota}, we obtain
\begin{align*}
    (\iota^* \iota_! M)[U] &= \bigoplus_n \K \left(\sum_{V \subseteq U} \Sur([n], V)\right) \otimes_{\Sym_n} M[n] \\
    &= \bigoplus_n \left(\bigoplus_{V \subseteq U} \K\Sur([n], V)\right) \otimes_{\Sym_n} M[n] \\
    &= \bigoplus_{V \subseteq U} \bigoplus_n \K\Sur([n], V) \otimes_{\Sym_n} M[n] \\
    &= \bigoplus_{V \subseteq U} (\kappa^*\kappa_!M)[V]
\end{align*}
where the last step follows from \cite[Proposition~2.6.7]{MR3641124}. Recognizing the latter as $(\kappa^* \kappa_! M \cdot \K E)[U]$, we have shown \eqref{eq:iotakappa}. Applying~$\ch$ to both sides and using \Cref{proposition:kanfrobenius} and \Cref{proposition:moduleproduct} yields $\F{f} = \ch(\kappa^* \kappa_!M) \cdot H$.

It remains to show that $\ch(\kappa^* \kappa_!M)$ has the same degree and leading term as~$f$. For this, we will prove that $(\kappa^* \kappa_!M)[m]$ and $M[m]$ are isomorphic as $\Sym_m$-modules for all $m \geq \deg f$. Consider the natural isomorphism \[(\kappa^*\kappa_!M)[U] = \bigoplus_n \K\Sur([n], U) \otimes_{\Sym_n} M[n]\] with $U = [m]$. In each summand, the factor $\K\Sur([n], [m])$ vanishes when $n < m$ and the factor $M[n]$ vanishes when $n > \deg f$. If $m \geq \deg f$, this means that every term vanishes except possibly the term $n = m$. Hence,
\begin{align*}
    (\kappa^*\kappa_!M)[m] &= \K\Sur([m], [m]) \otimes_{\Sym_m} M[m] \\
    &= \K\Sym_m \otimes_{\Sym_m} M[m] \\
    &= M[m]
\end{align*}
as desired.
\end{proof}
\begin{proof}[Second proof]
We claim that the choice
\[\FSur{f} = \sum_{\mu} \langle f, s_\mu[H_+]\rangle s_\mu\] satisfies the desired properties. In other words, we may take $\Fa_{\Sur}$ to be the adjoint to plethysm by $H_+$ under the Hall inner product.

By \Cref{remark:halladjoint}, we have
\begin{align*}
    \F{f} &= \sum_{\lambda} \langle f, s_\lambda[H]\rangle s_\lambda \\
    &= \sum_{\lambda} \langle f, s_\lambda[H_+ + 1]\rangle s_\lambda.
\end{align*}
By the plethystic addition formula (\Cref{proposition:plethysticaddition}), we have
\begin{align*}
    \F{f} &= \sum_{\lambda, \mu} \langle f, s_\mu[H_+] s_{\lambda / \mu}[1]\rangle s_\lambda.
\end{align*}
We have \[s_{\lambda / \mu}[1] = s_{\lambda / \mu}(1, 0, 0, \ldots) = \begin{cases} 1 & \mbox{if $\lambda / \mu$ is a horizontal strip;} \\ 0 & \mbox{otherwise,}\end{cases}\] so 
\begin{align*}
    \F{f} &= \sum_{\substack{\lambda, \mu \\ \text{$\lambda/\mu$ h. strip}}} \langle f, s_\mu[H_+]\rangle s_\lambda \\
    &= \sum_{\mu}\langle f, s_\mu[H_+] \rangle \left(\sum_{\substack{\lambda \\ \text{$\lambda/\mu$ h. strip}}}s_\lambda\right) \\
    &= \sum_{\mu}\langle f, s_\mu[H_+] \rangle s_\mu \cdot H,
\end{align*}
where the last equality follows from the Pieri rule.

Now, the only thing left to show is that~$f$ has the same degree and leading term as \[\sum_{\mu} \langle f, s_\mu[H_+]\rangle s_\mu.\] This follows directly from the observation that if $|\mu| \geq \deg f$, then $\langle f, s_\mu[H_+]\rangle = \langle f, s_\mu\rangle$.
\end{proof}
We refer to $\Fa_{\Sur} \colon \Lambda \to \Lambda$ as the \emph{surjective Frobenius transform}. Clearly, it is invertible:
\begin{coro}
    There exists a two-sided inverse $\Fa_{\Sur}^{-1} \colon \Lambda \to \Lambda$ of $\Fa_{\Sur}$.
\end{coro}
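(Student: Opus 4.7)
The plan is to extract invertibility directly from the two assertions in \Cref{proposition:fsur}: $\Fa_{\Sur}$ sends $\Lambda$ to $\Lambda$, and $\FSur{f}$ has the same degree and leading term as~$f$. Rephrased, if we let $\Lambda_{\leq n} = \bigoplus_{k = 0}^n \Lambda_k$, then $\Fa_{\Sur}(\Lambda_{\leq n}) \subseteq \Lambda_{\leq n}$, and the induced map on the associated graded $\Lambda_{\leq n} / \Lambda_{\leq n - 1} \cong \Lambda_n$ is the identity. This is exactly the setup needed for an inductive degree-by-degree construction of the inverse.

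Concretely, I would introduce the operator $N = \Fa_{\Sur} - \Id$ on $\Lambda$. By the leading term assertion, $N$ strictly lowers degree: if $f \in \Lambda_{\leq n}$ then $Nf \in \Lambda_{\leq n - 1}$. Consequently, for any particular $f \in \Lambda$, the sequence $f, Nf, N^2 f, \ldots$ terminates at $0$ after at most $\deg f + 1$ steps. Thus the formula
\[
    \Fa_{\Sur}^{-1} f = \sum_{k \geq 0} (-N)^k f
\]
defines a linear endomorphism of $\Lambda$ by an evaluation that, on each input, is only a finite sum. A short computation $(\Id + N)\sum_k (-N)^k = \sum_k (-N)^k (\Id + N) = \Id$, valid termwise on each $f$, verifies that this is a two-sided inverse.

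There is no real obstacle here; the only thing to be mildly careful about is that $\Lambda$ is infinite-dimensional, so one cannot literally apply ``triangular with $1$'s on the diagonal implies invertible'' to a single infinite matrix. The filtration by $\Lambda_{\leq n}$ (or, equivalently, the degree-lowering property of $N$) is what makes the formal Neumann series $\sum_k (-N)^k$ converge pointwise and gives a clean proof.
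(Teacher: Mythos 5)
Your proposal is correct and is essentially the paper's own argument: the paper sets $\mathcal{M}\{f\} = f - \FSur{f}$ (your $-N$), notes it strictly lowers degree by \Cref{proposition:fsur}, and defines $\FSurInv{f} = \sum_{k} \mathcal{M}^k\{f\}$, which is exactly your pointwise-finite Neumann series. No differences worth noting.
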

\begin{proof}
    Define $\mathcal{M} \colon \Lambda \to \Lambda$ by $\mathcal{M}\{f\} = f - \FSur{f}$. By \Cref{proposition:fsur}, we have \[\deg(\mathcal{M}\{f\}) < \deg f\] for any $f \in \Lambda \setminus \{0\}$. Hence, $\mathcal{M}^k\{f\} = 0$ for any $k > \deg f$.

    Define $\Fa_{\Sur}^{-1} \colon \Lambda \to \Lambda$ by \[\FSurInv{f} = f + \mathcal{M}\{f\} + \mathcal{M}^2\{f\} + \cdots.\] This is well-defined by the above, and it is easy to check that it is the two-sided inverse of $\Fa_{\Sur}$.
\end{proof}

Like the ordinary Frobenius transform, the surjective Frobenius transform can be described in terms of its matrix entries in the Schur basis.
\begin{defi}
Let $\lambda, \mu$ be partitions. Define the \emph{surjective restriction coefficient} \[t_\lambda^\mu = \langle \FSur{s_\lambda}, s_\mu\rangle\] and define the \emph{inverse surjective restriction coefficient} \[u_\lambda^\mu = \langle \FSurInv{s_\lambda}, s_\mu\rangle.\]
\end{defi}
By the above, we have $t_\lambda^\mu = u_\lambda^\mu = \delta_{\lambda \mu}$ for $|\mu| \geq |\lambda|$.
\section{Stable restriction}
\label{section:stable}
Define the \emph{stable restriction coefficients} $a_\lambda^\mu$ as follows. For any partition $\mu = (\mu_1, \ldots, \mu_\ell)$ and any $n \geq \mu_1 + |\mu|$, define $\mu^{(n)} = (n - |\mu|, \mu_1, \ldots, \mu_\ell)$. Then, for any partitions $\lambda, \mu$, the stable restriction coefficient $a_\lambda^\mu$ is defined by the following limit, which exists by a classical result of Littlewood \cite{MR1576896}: \[a_\lambda^\mu = \lim_{n \to \infty} r_{\lambda}^{\mu^{(n)}}.\] Littlewood also showed that $a_{\lambda}^\mu = \delta_{\lambda \mu}$ if $|\lambda| \leq |\mu|$, so the infinite matrix $[a_\lambda^\mu]$, with rows and columns indexed by partitions in increasing order of size, is upper unitriangular. In 2019, Assaf and Speyer \cite{MR4055931} found the following formula for the entries $b_\lambda^\mu$ of the inverse matrix $[b_\lambda^\mu] = [a_\lambda^\mu]^{-1}$:
\begin{restatable}[{\cite[Theorem~2]{MR4055931}}]{theorem}{assafspeyer}
\label{theorem:assafspeyer}
Let $\lambda, \mu$ be partitions. Then \[b_\lambda^\mu = (-1)^{|\lambda| - |\mu|} \langle s_{\lambda^T}, s_{\mu^T}[L_1 + L_2 + L_3 + \cdots] \cdot H \rangle.\] In particular, $(-1)^{|\lambda| - |\mu|}b_\lambda^\mu$ is a nonnegative integer.
\end{restatable}
In this section, we will provide an alternative proof of \Cref{theorem:assafspeyer}. In fact, we will prove similar plethystic formulas for all five kinds of restriction coefficients defined so far. These formulas have all been collected into \Cref{theorem:summary} below.

\begin{theorem}\label{theorem:summary}
    Let $\lambda, \mu$ be partitions with $|\lambda| = m$ and $|\mu| = n$.
    \begin{enumerate}[label=(\alph*)]
        \item The restriction coefficient $r_\lambda^\mu$ is given by 
        \begin{align*}
            r_{\lambda}^\mu
            &= \langle \F{s_\lambda}, s_\mu\rangle \\
            &= \langle s_\lambda, s_\mu[H] \rangle.
        \end{align*}
        \item The surjective restriction coefficient $t_\lambda^\mu$ is given by 
        \begin{align*}
            t_{\lambda}^\mu &= \langle \FSur{s_\lambda}, s_\mu\rangle\\
            &= \langle s_\lambda, s_\mu[H_+] \rangle.
        \end{align*}
        \item The inverse surjective restriction coefficient $u_\lambda^\mu$ is given by
        \begin{align*}
            u_\lambda^\mu &= \langle \FSurInv{s_\lambda}, s_\mu \rangle \\
            &= \langle s_\lambda, s_\mu[\omega(L_1) - \omega(L_2) + \omega(L_3) - \cdots] \rangle \\
            &= (-1)^{m - n}\langle s_{\lambda^T}, s_{\mu^T}[L_1 + L_2 + L_3 + \cdots] \rangle.
        \end{align*}
        \item The stable restriction coefficient $a_\lambda^\mu$ is given by
        \begin{align*}
            a_\lambda^\mu &= \langle H^\perp \FSur{s_\lambda}, s_\mu\rangle \\
            &= \langle \FSur{s_\lambda}, s_\mu \cdot H\rangle \\
            &= \langle s_\lambda, (s_\mu \cdot H)[H_+]\rangle.
        \end{align*}
        \item The inverse stable restriction coefficient $b_\lambda^\mu$ is given by
        \begin{align*}
            b_\lambda^\mu &= \langle \FSurInv{(H^\perp)^{-1} s_\lambda}, s_\mu \rangle \\
            &= \langle s_\lambda, s_\mu[\omega(L_1) - \omega(L_2) + \omega(L_3) - \cdots] \cdot (1 - e_1 + e_2 - e_3 + \cdots)\rangle \\
            &= (-1)^{m - n} \langle s_{\lambda^T}, s_{\mu^T}[L_1 + L_2 + L_3 + \cdots] \cdot H \rangle.
        \end{align*}
    \end{enumerate}
\end{theorem}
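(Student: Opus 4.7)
The strategy is to derive each formula using Hall-adjointness of operators on~$\Lambda$ and associativity of plethysm, building progressively on the earlier results. Parts~(a) and~(b) are essentially bookkeeping: (a) combines the definition of~$\Fa$ with Remark~\ref{remark:halladjoint}, and (b) combines the definition of $t_\lambda^\mu$ with the closed form from the second proof of Proposition~\ref{proposition:fsur}, which identifies the Hall transpose $(\Fa_\Sur)^\vee$ with the plethysm operator $g \mapsto g[H_+]$.

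For part~(c), the plan is to invert $(\Fa_\Sur)^\vee$. By associativity of plethysm, if $B \in \overline{\Lambda}$ is the plethystic inverse of~$H_+$ (meaning $B[H_+] = H_+[B] = p_1$), then $((\Fa_\Sur)^{-1})^\vee$ is the operator $g \mapsto g[B]$. The main obstacle is the identification $B = \sum_{n \geq 1}(-1)^{n-1}\omega(L_n)$. I would verify this by checking $H[B] = 1 + p_1$: writing $H[f] = \exp(\sum_k p_k[f]/k)$, the claim reduces to $\sum_k p_k[B]/k = \log(1 + p_1)$, which one obtains by M\"obius inversion on the Adams operators~$p_k$ combined with the defining formula for $L_n$. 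Granting the identification, the second expression follows. For the third, I use that~$\omega$ is a Hall isometry and the identity $\omega(f) = (-1)^{\deg f}\,f[-X]$ for homogeneous~$f$; combined with $\omega(L_n)[-X] = (-1)^n L_n$, this gives $B[-X] = -(L_1 + L_2 + \cdots)$, and then the plethystic duality $s_\mu[-Y] = (-1)^{|\mu|}s_{\mu^T}[Y]$ yields
\[
\omega((s_\mu[B])_m) = (-1)^m(s_\mu[B[-X]])_m = (-1)^{m - |\mu|}(s_{\mu^T}[L_1 + L_2 + \cdots])_m.
\]

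For part~(d), apply~(b) to get $r_\lambda^{\mu^{(n)}} = \langle \FSur{s_\lambda}, H^\perp s_{\mu^{(n)}}\rangle$, then expand $H^\perp s_{\mu^{(n)}}$ via the Pieri rule as the sum of $s_\tau$ over~$\tau$ with $\mu^{(n)}/\tau$ a horizontal strip. Parametrize each such~$\tau$ by its first part $\tau_1 \in [\mu_1, n - |\mu|]$ together with the partition $\sigma = (\tau_2, \tau_3, \ldots)$, for which $\mu/\sigma$ is a horizontal strip. Since only $s_\tau$ with $|\tau| \leq |\lambda|$ contribute to the pairing with $\FSur{s_\lambda}$, the upper bound $\tau_1 \leq n - |\mu|$ becomes vacuous once $n \geq |\mu| + |\lambda|$, so the pairing stabilizes. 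A parallel Pieri expansion of $s_\mu \cdot H$ yields the same parametrization, establishing $a_\lambda^\mu = \langle \FSur{s_\lambda}, s_\mu \cdot H\rangle$; the remaining two expressions follow by Hall-adjointness of $H^\perp$ and by~(b).

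Finally, part~(e) is an inverse-matrix computation. Part~(d) identifies $[a_\lambda^\mu]$ as the matrix of $\mathcal A = H^\perp \circ \Fa_\Sur$ in the Schur basis, so $\mathcal A^{-1} = \Fa_\Sur^{-1} \circ (H^\perp)^{-1}$ has entries $b_\lambda^\mu = \langle \FSurInv{(H^\perp)^{-1}s_\lambda}, s_\mu\rangle$, which is the first formula. Taking Hall transposes, $(H^\perp)^\vee$ is multiplication by~$H$, so $((H^\perp)^{-1})^\vee$ is multiplication by $1/H = 1 - e_1 + e_2 - \cdots$; composing with $(\Fa_\Sur^{-1})^\vee(s_\mu) = s_\mu[B]$ from~(c) gives the second formula. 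For the third, apply the $\omega$-isometry once more: the signs arising from $\omega((s_\mu[B])_m)$ worked out in~(c) combine degree-by-degree with the signs $(-1)^k$ from $(\omega(1/H))_k = (1/E)_k$ into a uniform factor $(-1)^{m-n}$ multiplying $\langle s_{\lambda^T}, s_{\mu^T}[L_1 + L_2 + \cdots]\cdot H\rangle$.
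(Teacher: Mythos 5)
Your proposal is correct and follows essentially the same route as the paper: adjointness of $\Fa_{\Sur}$ to plethysm by $H_+$ for (a)--(b), the plethystic inverse $\omega(L_1) - \omega(L_2) + \omega(L_3) - \cdots$ of $H_+$ for (c), Pieri-rule stabilization for (d), and inverting $\mathcal{A} = H^\perp \Fa_{\Sur}$ together with negation-rule sign bookkeeping for (e). The only cosmetic difference is that you re-derive Cadogan's identity (which the paper simply cites) by checking $H_+[\omega(L_1) - \omega(L_2) + \cdots] = p_1$ via M\"obius inversion on the Adams operations, which indeed works.
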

Before proving \Cref{theorem:summary}, we restate a basic result of plethystic calculus.
\begin{lemma}[{Negation Rule, \cite[Theorem~6]{MR2765321}}]\label{lemma:negation}
    Let $f \in \Lambda$ and $g \in \overline{\Lambda}$. If~$f$ is homogeneous, then \[f[-g] = (-1)^{\deg f} (\omega(f))[g].\]
\end{lemma}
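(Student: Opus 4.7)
The plan is to reduce the identity to the case $f = p_n$ by exploiting multiplicativity on algebra generators. Concretely, I would show that both maps $\phi\colon f \mapsto f[-g]$ and $\psi\colon f \mapsto (-1)^{\deg f}\, \omega(f)[g]$ (defined on homogeneous $f$ and extended $\Z$-linearly to all of $\Lambda$) are ring homomorphisms $\Lambda \to \overline{\Lambda}$. For $\phi$, this is standard: plethystic substitution is by definition evaluation of $f$, viewed as a polynomial in $p_1, p_2, \ldots$, after substituting $p_k \mapsto p_k[-g]$, and hence is a ring homomorphism in $f$. For $\psi$, the factor $(-1)^{\deg}$ is multiplicative on products of homogeneous elements since $(-1)^{\deg(f_1 f_2)} = (-1)^{\deg f_1}(-1)^{\deg f_2}$, while $\omega\colon \Lambda \to \Lambda$ and $h \mapsto h[g]$ are both ring homomorphisms; composing and distributing over products of homogeneous elements therefore produces a ring homomorphism that extends by linearity to all of $\Lambda$.

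Since $\Lambda$ is freely generated as a commutative ring by $\{p_n\}_{n \geq 1}$, it then suffices to check $\phi(p_n) = \psi(p_n)$ for each $n \geq 1$. For the left side, $p_n[\cdot]$ acts on the power-sum expansion of its argument by $p_k \mapsto p_{nk}$, hence is $\Z$-linear in its argument, so $p_n[-g] = -p_n[g]$. For the right side, $\omega(p_n) = (-1)^{n-1} p_n$ gives
\[
    (-1)^n\, \omega(p_n)[g] \;=\; (-1)^n (-1)^{n-1}\, p_n[g] \;=\; -p_n[g].
\]
The two values agree, completing the verification on generators and therefore on all homogeneous $f$ by multiplicativity of $\phi$ and $\psi$.

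There is no serious obstacle here; the argument is essentially formal. The only points worth getting right in the write-up are (i) the well-definedness of $\psi$ as a ring homomorphism despite the $(-1)^{\deg f}$ prefactor, which reduces to the elementary observation that $(-1)^{\deg}$ is multiplicative on homogeneous elements, and (ii) the convention that plethystic operators $p_n[\,\cdot\,]$ are extended $\Z$-linearly in their argument, which makes $p_n[-g] = -p_n[g]$ tautological. Once these conventions are in place, the proof collapses to the one-line sign check on the generator $p_n$ displayed above.
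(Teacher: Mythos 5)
Your argument is correct, but note that the paper does not prove this lemma at all: it is quoted verbatim from Loehr and Remmel's expos\'e on plethysm and used as a black box, so there is no in-paper proof to compare against. Your proof is the standard one (and essentially the one in the cited source): both $f \mapsto f[-g]$ and $f \mapsto (-1)^{\deg f}\,\omega(f)[g]$ are multiplicative in $f$, so it suffices to check the sign on the power sums, where additivity of $p_n[\bullet]$ in its argument gives $p_n[-g] = -p_n[g]$ and $\omega(p_n) = (-1)^{n-1}p_n$ gives the matching $(-1)^n(-1)^{n-1} = -1$. The one imprecision worth fixing in a write-up: the paper works with $\Lambda = \Lambda_{\Z}$, which is \emph{not} freely generated as a commutative ring by the $p_n$ (that is true only after tensoring with $\Q$). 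The standard repair is harmless --- either embed $\Lambda$ and $\overline{\Lambda}$ into their rationalizations, where both maps extend to $\Q$-algebra homomorphisms determined by their values on the $p_n$, or run the same generator check on the $h_n$ (using $h_n[-g] = (-1)^n e_n[g]$, itself an easy consequence of the generating-function identity $E(t) = 1/H(-t)$). With that caveat addressed, your proof is complete.
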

\begin{proof}[Proof of \Cref{theorem:summary}]
    \begin{enumerate}[label=(\alph*)]
        \item The first equality is true by definition. The second follows from \Cref{remark:halladjoint}.
        \item The first equality is true by definition. The second is demonstrated in the second proof of \Cref{proposition:fsur}.
        \item The first equality is true by definition.
        
        For the second, Cadogan showed in 1971 that $\omega(L_1) - \omega(L_2) + \omega(L_3) - \cdots$ is the plethystic inverse of $H_+$ \cite{MR0284377}. In part (b), we showed that $\Fa_{\Sur}$ is adjoint to plethysm by $H_+$. Hence, $\Fa_{\Sur}^{-1}$ is adjoint to plethysm by $\omega(L_1) - \omega(L_2) + \omega(L_3) - \cdots$, as desired.

        For the third, by \Cref{lemma:negation} and the associativity of plethysm, we have
        \begin{align*}
            & \langle s_\lambda, s_\mu[\omega(L_1) - \omega(L_2) + \omega(L_3) - \cdots] \rangle \\
            =& \langle s_\lambda, s_\mu[-(L_1 + L_2 + L_3 + \cdots)[-p_1]] \rangle \\
            =& \langle s_\lambda, (s_\mu[-(L_1 + L_2 + L_3 + \cdots)]) [-p_1] \rangle \\
            =& \langle s_\lambda[-p_1], s_\mu[-(L_1 + L_2 + L_3 + \cdots)] \rangle \\
            =& \langle (-1)^m \omega(s_\lambda)[p_1], (-1)^n \omega(s_\mu)[L_1 + L_2 + L_3 + \cdots] \rangle \\
            =& (-1)^{m - n} \langle s_{\lambda^T}, s_{\mu^T}[L_1 + L_2 + L_3 + \cdots] \rangle 
        \end{align*}
        as desired.
        \item By part (a), we have
        \begin{equation}\label{equation:amulambda}
            a^\mu_{\lambda}
            = \lim_{n \to \infty} \langle \F{s_\lambda}, s_{\mu^{(n)}} \rangle 
            = \lim_{n \to \infty} \langle \FSur{s_\lambda} \cdot H, s_{\mu^{(n)}} \rangle.
        \end{equation}
        Now, we claim that for any $f \in \Lambda$, we have
        \begin{equation}\label{equation:limit}
            \lim_{n \to \infty} \langle f \cdot H, s_{\mu^{(n)}} \rangle = \langle f, s_\mu \cdot H \rangle.
        \end{equation}
        By linearity, it is enough to show \eqref{equation:limit} when $f = s_\nu$ for some partition~$\nu$. In that case, by the Pieri rule,
        \[
            \lim_{n \to \infty} \langle f \cdot H, s_{\mu^{(n)}} \rangle = \lim_{n \to \infty} \begin{cases} 1 & \mbox{if $\mu^{(n)}/\nu$ is a horizontal strip;} \\ 0 & \mbox{otherwise.}\end{cases}
        \]
        For~$n$ sufficiently large, it is easy to see that $\mu^{(n)}/\nu$ is a horizontal strip if and only if $\nu / \mu$ is a horizontal strip. So the above limit is equal to
        \[
            \begin{cases} 1 & \mbox{if $\nu/\mu$ is a horizontal strip;} \\ 0 & \mbox{otherwise,}\end{cases}
        \]
        which is just $\langle f, s_\mu \cdot H \rangle$. Hence, \eqref{equation:limit} indeed holds. Substituting \eqref{equation:limit} into \eqref{equation:amulambda} with $f = \FSur{s_\lambda}$, we obtain \[a^\mu_\lambda = \langle \FSur{s_\lambda}, s_\mu \cdot H \rangle.\] The result now follows from the fact that $\Fa_{\Sur}$ is adjoint to plethysm by $H_+$.
        \item Define the \emph{stable Frobenius transform} $\mathcal{A} \colon \Lambda \to \Lambda$ by $\mathcal{A} \{f\} = H^\perp \FSur{f}$. Part (d) implies that the matrix of $\mathcal{A}$ in the Schur basis is $[a_\lambda^\mu]$. Therefore, the matrix of $\mathcal{A}^{-1}$ in the Schur basis is $[b_\lambda^\mu]$. This proves the first equality.
        
        The second equality then follows from the fact that $\Fa_{\Sur}^{-1}$ is adjoint to plethysm by $\omega(L_1) - \omega(L_2) + \omega(L_3) - \cdots$.
        
        For the third equality, we again use \Cref{lemma:negation}, together with the fact that plethysm by $-p_1$ is an isometry and a ring automorphism:
        \begin{align*}
            &\langle s_\lambda, s_\mu[\omega(L_1) - \omega(L_2) + \omega(L_3) - \cdots] \cdot (1 - e_1 + e_2 - e_3 + \cdots)\rangle \\
            =& \langle s_\lambda, s_\mu[-(L_1 + L_2 + L_3 + \cdots)][-p_1] \cdot (1 - e_1 + e_2 - e_3 + \cdots)\rangle \\
            =& \langle s_\lambda[-p_1], s_\mu[-(L_1 + L_2 + L_3 + \cdots)] \cdot (1 - e_1 + e_2 - e_3 + \cdots)[-p_1]\rangle \\
            =& \langle (-1)^m \omega(s_\lambda)[p_1], (-1)^n \omega(s_\mu)[L_1 + L_2 + L_3 + \cdots] \cdot H\rangle \\
            =& (-1)^{m - n} \langle s_{\lambda^T}, s_{\mu^T}[L_1 + L_2 + L_3 + \cdots] \cdot H\rangle,
        \end{align*}
        as desired. \qedhere
    \end{enumerate}
\end{proof}
\section{An expansion of the Frobenius transform}\label{section:expansion}
In 1999, Zabrocki showed that every abelian group homomorphism from~$\Lambda$ to itself can be written as a sum of operators of the form $f g^\perp$ where $f, g \in \Lambda$ \cite[Corollary~4.11]{MR1817706}. In this section, we will prove the following theorem, which shows how to write $\Fa_{\Sur}$ in this way.
\expansion*
\begin{rema}
    The symmetric power series $s_\lambda[h_2 + h_3 + h_4 + \cdots] \in \overline{\Lambda}$ appearing in \Cref{theorem:expansion} contains only terms of degree at least $2|\lambda|$. Hence, the degree of the summand $s_\lambda (s_\lambda[h_2 + h_3 + h_4 + \cdots])^\perp f$ is at most $\deg(f) - |\lambda|$. In particular, it vanishes if $|\lambda| > \deg(f)$, so the sum in \Cref{theorem:expansion} is finite.
\end{rema}
\begin{proof}[Proof of \Cref{theorem:expansion}]
    Let~$\mu$ be an arbitrary partition. It suffices to show that
    \begin{equation}\label{equation:dualsmu}
        \langle s_\mu, \FSur{f} \rangle = \left \langle s_\mu, \sum_{\lambda} s_\lambda (s_\lambda[h_2 + h_3 + h_4 + \cdots])^\perp f \right \rangle.
    \end{equation}
    By \Cref{theorem:summary}(b) and \Cref{proposition:plethysticaddition}, we have
    \begin{align*}
        \langle s_\mu, \FSur{f} \rangle &= \langle s_\mu[H_+], f \rangle \\
        &= \left\langle \sum_{\lambda} s_{\mu/\lambda} s_{\lambda}[h_2 + h_3 + h_4 + \cdots], f \right\rangle \\
        &= \left\langle\sum_{\lambda} s_{\lambda}[h_2 + h_3 + h_4 + \cdots] s_\lambda^\perp s_\mu, f \right\rangle \\
        &= \left \langle s_\mu, \sum_{\lambda} s_\lambda (s_\lambda[h_2 + h_3 + h_4 + \cdots])^\perp f \right \rangle,
    \end{align*}
    completing the proof of \eqref{equation:dualsmu} and of the theorem.
\end{proof}
Now, we will use \Cref{theorem:summary} and \Cref{theorem:expansion} to study the vanishing of the surjective restriction coefficients $t_\lambda^\mu$, the restriction coefficients $r_\lambda^\mu$, and the restriction coefficients $a_\lambda^\mu$.
\begin{theorem}\label{theorem:surjectivevanishing}
    Let $\lambda, \mu$ be partitions. If $t_\lambda^\mu > 0$, then $|\lambda \cap \mu| \geq 2 |\mu| - |\lambda|$.
\end{theorem}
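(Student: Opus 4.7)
The plan is to deduce the inequality directly from \Cref{theorem:expansion} combined with elementary containment and degree facts. Applying \Cref{theorem:expansion} to $f = s_\lambda$ (renaming the sum index to $\nu$) gives
\[
    \FSur{s_\lambda} \;=\; \sum_{\nu} s_\nu \cdot (s_\nu[h_2 + h_3 + \cdots])^{\perp} s_\lambda.
\]
Pairing with $s_\mu$ in the Hall inner product and using the adjointness $\langle s_\nu g, s_\mu\rangle = \langle g, s_{\mu/\nu}\rangle$ yields
\[
    t_\lambda^\mu \;=\; \sum_{\nu} \bigl\langle s_\lambda, \; s_{\mu/\nu} \cdot s_\nu[h_2 + h_3 + \cdots] \bigr\rangle.
\]
Each factor $s_{\mu/\nu}$ and $s_\nu[h_2+h_3+\cdots]$ is Schur-positive, so every summand is a nonnegative integer. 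Consequently, $t_\lambda^\mu > 0$ guarantees the existence of partitions $\nu, \sigma, \pi$ with $c^\mu_{\nu,\sigma} > 0$, $\langle s_\pi, s_\nu[h_2+h_3+\cdots]\rangle > 0$, and $c^\lambda_{\sigma,\pi} > 0$.

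Next, I would extract two opposing bounds on $|\nu|$. From $c^\lambda_{\sigma,\pi} > 0$ we have $\sigma \subseteq \lambda$, and from the symmetry $c^\mu_{\nu,\sigma} = c^\mu_{\sigma,\nu}$ together with the Littlewood-Richardson rule we have $\sigma \subseteq \mu$. Hence $\sigma \subseteq \lambda \cap \mu$, giving $|\mu| - |\nu| = |\sigma| \leq |\lambda \cap \mu|$, i.e., $|\nu| \geq |\mu| - |\lambda \cap \mu|$. On the other hand, since $h_2 + h_3 + \cdots$ has minimum degree $2$, every Schur function appearing in the plethysm $s_\nu[h_2+h_3+\cdots]$ has size at least $2|\nu|$, so $|\pi| \geq 2|\nu|$. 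Combined with the degree identity $|\lambda| = |\sigma| + |\pi| = (|\mu| - |\nu|) + |\pi|$, this gives $|\nu| \leq |\lambda| - |\mu|$.

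Chaining the two estimates yields $|\mu| - |\lambda \cap \mu| \leq |\nu| \leq |\lambda| - |\mu|$, which rearranges to $|\lambda \cap \mu| \geq 2|\mu| - |\lambda|$, as required. There is no particularly hard step; the main conceptual point is recognizing that the intermediate partition $\sigma$ produced by expanding the Schur-positive product in the Schur basis must be contained in both $\mu$ and $\lambda$, so that $|\lambda \cap \mu|$ actually enters the estimate. Everything else is a clean degree count matched against the minimum degree floor of $h_2 + h_3 + \cdots$.
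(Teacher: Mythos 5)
Your argument is correct and is essentially the paper's own proof: both extract from \Cref{theorem:expansion} an intermediate partition (your $\sigma$, the paper's $\rho$) contained in both $\lambda$ and $\mu$, and combine $|\mu| = |\nu| + |\sigma|$ with the degree floor $2|\nu|$ coming from $s_\nu[h_2 + h_3 + \cdots]$ to get the inequality. The only difference is bookkeeping — you expand the skewing operator and the plethysm explicitly into Littlewood--Richardson and plethysm coefficients, while the paper phrases the same positivity facts in terms of which Schur functions ``appear'' in each factor.
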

\begin{proof}
    By the definition of the surjective restriction coefficients, the Schur function $s_\mu$ appears in $\FSur{s_\lambda}$. By \Cref{theorem:expansion}, there exists a partition~$\nu$ such that $s_\mu$ appears in 
    \[
        s_\nu (s_{\nu}[h_2 + h_3 + h_4 + \cdots])^\perp s_\lambda.
    \]
    Hence, there exists a partition~$\rho$ such that $s_\rho$ appears in $(s_{\nu}[h_2 + h_3 + h_4 + \cdots])^\perp s_\lambda$ and $s_\mu$ appears in $s_\nu s_\rho$.

    Since $s_\rho$ appears in $(s_{\nu}[h_2 + h_3 + h_4 + \cdots])^\perp s_\lambda$, we have that $\rho \subseteq \lambda$ and 
    \begin{equation}\label{equation:rhoinequality}
        |\rho| \leq |\lambda| - 2|\nu|.
    \end{equation}
    Since $s_\mu$ appears in $s_\nu s_\rho$, we have that $\rho \subseteq \mu$ and
    \begin{equation}\label{equation:rhoequation}
        |\mu| = |\nu| + |\rho|.
    \end{equation}
    Combining \eqref{equation:rhoinequality} and \eqref{equation:rhoequation}, we obtain $|\rho| \geq 2|\mu| - |\lambda|$. Now, we have $\rho \subseteq \lambda \cap \mu$, so \[|\lambda \cap \mu| \geq |\rho| \geq 2|\mu| - |\lambda|,\] as desired.
\end{proof}
\vanishing*
\begin{proof}
    By the Pieri rule and the definition of $\Fa_{\Sur}$, we have \[r_\lambda^\mu = \sum_{\substack{\nu \\ \text{$\mu/\nu$ is a horizontal strip}}} t_\lambda^\nu.\] Hence, there exists a partition~$\nu$ such that $\mu/\nu$ is a horizontal strip and $t_\lambda^\nu > 0$. By \Cref{theorem:surjectivevanishing}, we have \[|\lambda \cap \nu| \geq 2 |\nu| - |\lambda|.\] Now, $\hat \mu \subseteq \nu$, so 
    \begin{align*}
        |\lambda \cap \hat \mu| &\geq |\lambda \cap \nu| - (|\nu| - |\hat \mu|) \\
        &\geq (2 |\nu| - |\lambda|) - (|\nu| - |\hat \mu|) \\
        &= (2|\hat \mu| - |\lambda|) + (|\nu| - |\hat \mu|) \\
        &\geq 2|\hat \mu| - |\lambda|,
    \end{align*}
    as desired.
\end{proof}
\stablevanishing*
\begin{proof}
    By the Pieri rule and \Cref{theorem:summary}(d), we have
    \[
        a_\lambda^\mu = \sum_{\substack{\nu \\ \text{$\nu/\mu$ is a horizontal strip}}} t_\lambda^\nu.
    \]
    Hence, there exists a partition~$\nu$ such that $\nu/\mu$ is a horizontal strip and $t_\nu^\mu > 0$. By \Cref{theorem:surjectivevanishing}, we have \[|\lambda \cap \nu| \geq 2 |\nu| - |\lambda|.\] Now, $\mu \subseteq \nu$, so
    \begin{align*}
        |\lambda \cap \mu| &\geq |\lambda \cap \nu| - (|\nu| - |\mu|) \\
        &\geq (2 |\nu| - |\lambda|) - (|\nu| - |\mu|)\\
        &= (2|\mu| - |\lambda|) + (|\nu| - |\mu|) \\
        &= 2|\mu| - |\lambda|, 
    \end{align*}
    as desired.
\end{proof}
\section{Computations of the surjective Frobenius transform}
\label{section:computations}
In this section, we will compute $\FSur{f}$ for various symmetric functions~$f$. Recall from the introduction:

\computation*

\begin{exam}\label{example:h}
    Let us use \Cref{theorem:computation}(a) to compute $\FSur{h_{2, 2}}$. First, we list all the functions $M \colon \N^2 \to \N$ such that $M(0, 0) = 0$ and $\sum_{j \in \N^2} j M(j) = (2, 2)$. There are nine such functions $M_1, \ldots, M_9$. Here are all of their nonzero values.\footnote{For readers who are familiar with the language of multisets and multiset partitions \cite{MR4295089}, it can be helpful to remember that such functions~$M$ are in bijection with multiset partitions of $\ldblbrace 1, 1, 2, 2\rdblbrace$. The multiset partition corresponding to the function~$M$ contains $M(j)$ copies of $\ldblbrace 1^{j_1}, 2^{j_2}\rdblbrace$ for all $j \in \N^2$. For example, the function $M_6$ corresponds to the multiset partition $\ldblbrace \ldblbrace 1, 1 \rdblbrace, \ldblbrace 2 \rdblbrace, \ldblbrace 2 \rdblbrace \rdblbrace \Vdash \ldblbrace 1, 1, 2, 2\rdblbrace$.}
    \begin{equation*}
        \begin{gathered}
            M_1(2, 2) = 1 \\
            M_2(1, 1) = 2 \\
            M_3(2, 1) = 1 \qquad M_3(0, 1) = 1 \\
            M_4(1, 2) = 1 \qquad M_4(1, 0) = 1 \\
            M_5(2, 0) = 1 \qquad M_5(0, 2) = 1 \\
            M_6(2, 0) = 1 \qquad M_6(0, 1) = 2 \\
            M_7(0, 2) = 1 \qquad M_7(1, 0) = 2 \\
            M_8(1, 1) = 1 \qquad M_8(1, 0) = 1 \qquad M_8(0, 1) = 1 \\
            M_9(1, 0) = 2 \qquad M_9(0, 1) = 2
        \end{gathered}
    \end{equation*}
    Thus
    \begin{align*}
        \FSur{h_{2, 2}} &= \underbrace{h_1}_{M_1} + \underbrace{h_2}_{M_2} + \underbrace{h_1^2}_{M_3} + \underbrace{h_1^2}_{M_4} + \underbrace{h_1^2}_{M_5} + \underbrace{h_1 h_2}_{M_6} + \underbrace{h_1 h_2}_{M_7} + \underbrace{h_1^3}_{M_8} + \underbrace{h_2^2}_{M_9} \\
        &= h_1 + h_2 + 3h_{1, 1} + 2h_{2, 1} + h_{1, 1, 1} + h_{2, 2}.
    \end{align*}
\end{exam}

\begin{exam}
    Let us use \Cref{theorem:computation}(b) to compute $\FSur{e_{5, 3}}$. First, we list all the functions $M \colon \{0, 1\}^2 \to \N$ such that $M(0, 0) = 0$ and $\sum_{j \in \{0, 1\}^2} j M(j) = (5, 3)$. There are four such functions $M_1, M_2, M_3, M_4$. Here are all of their nonzero values.
    \begin{equation*}
        \begin{gathered}
            M_1(1, 0) = 5 \qquad M_1(0, 1) = 3 \\
            M_2(1, 1) = 1 \qquad M_2(1, 0) = 4 \qquad M_2(0, 1) = 2 \\
            M_3(1, 1) = 2 \qquad M_3(1, 0) = 3 \qquad M_3(0, 1) = 1 \\
            M_4(1, 1) = 3 \qquad M_4(1, 0) = 2 \\
        \end{gathered}
    \end{equation*}
    Thus
    \begin{align*}
        \FSur{e_{5, 3}} &= \underbrace{e_5 e_3}_{M_1} + \underbrace{h_1 e_4 e_2}_{M_2} + \underbrace{h_2 e_3 e_1}_{M_3} + \underbrace{h_3 e_2}_{M_4}.
    \end{align*}
\end{exam}

\begin{exam}
    Let us use \Cref{theorem:computation}(c) to compute $\FSur{p_{15, 10, 6}}$. Take $\lambda = (15, 10, 6)$ and $\ell = 3$. There are five partitions of $[\ell]$ into nonempty sets: $\{\{1, 2, 3\}\}$, $\{\{1, 2\}, \{3\}\}$, $\{\{1, 3\}, \{2\}\}$, $\{\{2, 3\}, \{1\}\}$, and $\{\{1\}, \{2\}, \{3\}\}$. Thus,
    \begin{align*}
        \FSur{p_\lambda} =& \left(\sum_{d \mid \gcd(\lambda_1, \lambda_2, \lambda_3)} d^2 p_d\right) \\
        &+ \left(\sum_{d \mid \gcd(\lambda_1, \lambda_2)} d p_d\right) \left(\sum_{d \mid \lambda_3} p_d\right) \\
        &+ \left(\sum_{d \mid \gcd(\lambda_1, \lambda_3)} d p_d\right) \left(\sum_{d \mid \lambda_2} p_d\right) \\
        &+ \left(\sum_{d \mid \gcd(\lambda_2, \lambda_3)} d p_d\right) \left(\sum_{d \mid \lambda_1} p_d\right) \\
        &+ \left(\sum_{d \mid \lambda_1} p_d\right) \left(\sum_{d \mid \lambda_2} p_d\right) \left(\sum_{d \mid \lambda_3} p_d\right) \\
        =&\,\, p_1 + (p_1 + 5p_5)(p_1 + p_2 + p_3 + p_6) \\
        &+ (p_1 + 3p_3)(p_1 + p_2 + p_5 + p_{10}) \\
        &+ (p_1 + 2p_2)(p_1 + p_3 + p_5 + p_{15}) \\
        &+ (p_1 + p_3 + p_5 + p_{15})(p_1 + p_2 + p_5 + p_{10})(p_1 + p_2 + p_3 + p_6).
    \end{align*}
\end{exam}

\begin{proof}[Proof of \Cref{theorem:computation}]
    \statement{(a)} Let $t_1, \ldots, t_\ell$ be variables and let \[f = H(t_1) \cdots H(t_\ell) \in \Lambda\llbracket t_1, \ldots, t_\ell \rrbracket.\] We will use \Cref{proposition:rootsofunityformula} to compute \[\F{f} \in \overline{\Lambda}\llbracket t_1, \ldots, t_\ell \rrbracket.\]
    For any partition~$\mu$, we have 
    \begin{align*}
        f(\Xi_{\mu}) &= \prod_{i = 1}^\ell H(t_i)(\Xi_\mu) \\
        &= \prod_{i = 1}^\ell \prod_{j = 1}^{\ell(\mu)} \prod_{k = 0}^{\mu_j - 1} \frac{1}{1 - t_i \exp(2 \pi i k / \mu_j)} \\
        &= \prod_{i = 1}^\ell \prod_{j = 1}^{\ell(\mu)} \frac{1}{1 - t_i^{\mu_j}}.
    \end{align*}
    Hence, by \Cref{proposition:rootsofunityformula}, we have
    \[
        \F{f} = \sum_{\mu} \left(\prod_{i = 1}^\ell \prod_{j = 1}^{\ell(\mu)} \frac{1}{1 - t_i^{\mu_j}}\right) \frac{p_\mu}{z_\mu}. 
    \]
    We may recognize the right-hand side as a product of exponentials, and then evaluate the product as follows.
    \begin{align*}
        \F{f} &= \prod_{k} \exp\left(\frac{p_k}{k} \prod_{i=1}^\ell \frac{1}{1 - t_i^k}\right) \\
        &= \prod_{k} \exp\left(\frac{p_k}{k} \sum_{j \in \N^\ell} (t_1^{j_1}\cdots t_\ell^{j_\ell})^k \right) \\
        &= \prod_{j \in \N^\ell} \exp\left(\sum_k \frac{p_k}{k} (t_1^{j_1}\cdots t_\ell^{j_\ell})^k \right) \\
        &= \prod_{j \in \N^\ell} H(t_1^{j_1}\cdots t_\ell^{j_\ell}).
    \end{align*}
    Hence, \[\FSur{f} = \frac{\F{f}}{H} = \prod_{j \in \N^\ell \setminus \{(0, \ldots, 0)\}} H(t_1^{j_1}\cdots t_\ell^{j_\ell}).\]
    Finally, the result follows from taking the coefficient of $t_1^{\lambda_1} \cdots t_\ell^{\lambda_\ell}$ on both sides.
    
    \statement{(b)} Let $t_1, \ldots, t_\ell$ be variables and let \[f = E(t_1) \cdots E(t_\ell) \in \Lambda\llbracket t_1, \ldots, t_\ell \rrbracket.\] We will use \Cref{proposition:rootsofunityformula} to compute \[\F{f} \in \overline{\Lambda}\llbracket t_1, \ldots, t_\ell \rrbracket.\]
    For any partition~$\mu$, we have 
    \begin{align*}
        f(\Xi_{\mu}) &= \prod_{i = 1}^\ell E(t_i)(\Xi_\mu) \\
        &= \prod_{i = 1}^\ell \prod_{j = 1}^{\ell(\mu)} \prod_{k = 0}^{\mu_j - 1} (1 + t_i \exp(2 \pi i k / \mu_j)) \\
        &= \prod_{i = 1}^\ell \prod_{j = 1}^{\ell(\mu)} (1 - (-t_i)^{\mu_j}).
    \end{align*}
    Hence, by \Cref{proposition:rootsofunityformula}, we have
    \[
        \F{f} = \sum_{\mu} \left(\prod_{i = 1}^\ell \prod_{j = 1}^{\ell(\mu)} (1 - (-t_i))^{\mu_j}\right) \frac{p_\mu}{z_\mu}. 
    \]
    We may recognize the right-hand side as a product of exponentials, and then evaluate the product as follows.
    \begin{align*}
        \F{f} &= \prod_{k} \exp\left(\frac{p_k}{k} \prod_{i=1}^\ell (1 - (-t_i)^k)\right) \\
        &= \prod_{k} \exp\left(\frac{p_k}{k} \sum_{j \in \{0, 1\}^\ell} (-1)^{j_1 + \cdots + j_\ell} ((-t_1)^{j_1}\cdots (-t_\ell)^{j_\ell})^k \right) \\
        &= \prod_{j \in \{0, 1\}^\ell} \exp\left(\sum_k \frac{p_k}{k} ((-1)^{j_1 + \cdots + j_\ell})^{k-1} (t_1^{j_1}\cdots t_\ell^{j_\ell})^k \right) \\
        &= \prod_{j \in \{0, 1\}^\ell} \begin{cases}
            H(t_1^{j_1}\cdots t_\ell^{j_\ell}) & \mbox{if $j_1 + \cdots + j_\ell$ is even;} \\
            E(t_1^{j_1}\cdots t_\ell^{j_\ell}) & \mbox{if $j_1 + \cdots + j_\ell$ is odd.}
        \end{cases}
    \end{align*}
    Hence, \[\FSur{f} = \frac{\F{f}}{H} = \prod_{j \in \{0, 1\}^\ell \setminus \{(0, \ldots, 0)\}} \begin{cases}
        H(t_1^{j_1}\cdots t_\ell^{j_\ell}) & \mbox{if $j_1 + \cdots + j_\ell$ is even;} \\
        E(t_1^{j_1}\cdots t_\ell^{j_\ell}) & \mbox{if $j_1 + \cdots + j_\ell$ is odd.}
    \end{cases}\]
    Finally, the result follows from taking the coefficient of $t_1^{\lambda_1} \cdots t_\ell^{\lambda_\ell}$ on both sides.
    
    \statement{(c)} By \Cref{proposition:rootsofunityformula}, we have
    \begin{equation}\label{equation:rootsofunitypowersum}
        \F{p_\lambda} = \sum_{\mu} p_\lambda(\Xi_\mu) \frac{p_\mu}{z_\mu}.
    \end{equation}
    For all~$k$, we have \[p_k(\Xi_\mu) = \sum_{d \mid k} d m_d(\mu),\] so \eqref{equation:rootsofunitypowersum} becomes
    \begin{equation}\label{equation:powersumproduct}
        \F{p_\lambda} = \sum_{\mu} \prod_{i = 1}^\ell \left(\sum_{d \mid \lambda_i} d m_d(\mu) \right)\frac{p_\mu}{z_\mu}.
    \end{equation}
    Let us say that a function $\mathbf{d} \colon [\ell] \to \N$ is \emph{permissible} if $\mathbf{d}(i) \mid \lambda_i$ for all~$i$. The product on the right-hand side of \eqref{equation:powersumproduct} can be expanded into a sum over all permissible functions:
    \begin{align}
        \F{p_\lambda} &= \sum_{\mu} \sum_{\text{$\mathbf{d}$ permissible}} \left(\prod_{i=1}^\ell \mathbf{d}(i) m_{\mathbf{d}(i)}(\mu)\right) \frac{p_\mu}{z_\mu} \nonumber\\
        &= \sum_{\mu} \sum_{\text{$\mathbf{d}$ permissible}} \left(\prod_{d} (d m_d(\mu))^{|\mathbf{d}^{-1}(d)|}\right) \frac{p_\mu}{z_\mu}.\label{equation:mdmu}
    \end{align}

    For any~$k$, let $(x)_{k} = x(x-1)\cdots(x-k+1)$ denote the falling factorial. It is well-known \cite[Chapter~6.1]{MR1397498} that for any~$n$, the monomial $x^n$ can be written as a linear combination
    \[x^n  = \sum_{k} \stirlingii{n}{k} (x)_k\]
    of falling factorials, where the coefficient $\stirlingii{n}{k}$ (a \emph{Stirling number of the second kind}) is the number of partitions of $[n]$ into~$k$ nonempty sets. Let us use this to rewrite the factor $(m_d(\mu))^{{|\mathbf{d}^{-1}(d)|}}$ appearing in \eqref{equation:mdmu}. We obtain
    \begin{equation}\label{equation:stirlingiiplambda}
        \F{p_\lambda} = \sum_{\mu} \sum_{\text{$\mathbf{d}$ permissible}} \left(\prod_{d} d^{|\mathbf{d}^{-1}(d)|} \sum_{k} \stirlingii{|\mathbf{d}^{-1}(d)|}{k} (m_d(\mu))_{k}\right) \frac{p_\mu}{z_\mu}.
    \end{equation}

    Given a permissible function $\mathbf{d} \colon [\ell] \to \N$ and a partition~$\pi$ of $[\ell]$ into nonempty sets, let us say that~$\pi$ is \emph{level} with respect to $\mathbf{d}$ if $\mathbf{d}(i) = \mathbf{d}(j)$ whenever~$i$ and~$j$ are in the same part of~$\pi$. If~$\pi$ is level with respect to $\mathbf{d}$, we may define the function $\widetilde{\mathbf{d}} \colon \pi \to \N$ by taking $\widetilde{\mathbf{d}}(U)$ to be the common value of $\mathbf{d}(i)$ for $i \in U$.

    Suppose that $\mathbf{d}$ is a fixed permissible function and $\{k_d\}_d$ is any sequence. Then it is easy to see that the product \[\prod_{d} \stirlingii{|\mathbf{d}^{-1}(d)|}{k_d}\] is equal to the number of set partitions~$\pi$ that are level with respect to~$U$ and which satisfy $|\widetilde{\mathbf{d}}^{-1}(d)| = k_d$ for all~$d$. Using this fact, we may expand the product in \eqref{equation:stirlingiiplambda} into a sum over all functions that are level with respect to $\mathbf{d}$:
    \[
        \F{p_\lambda} = \sum_{\mu} \sum_{\text{$\mathbf{d}$ permissible}} \sum_{\text{$\pi$ level}} \left(\prod_{d} d^{|\mathbf{d}^{-1}(d)|} (m_d(\mu))_{|\widetilde{\mathbf{d}}^{-1}(d)|}\right) \frac{p_\mu}{z_\mu}
    \]
    We will simplify this expression by switching the order of summation so that the sum over~$\pi$ is all the way on the outside. To do so, given a partition~$\pi$ of $[\ell]$ into nonempty sets, we will now describe the set of all permissible functions $\mathbf{d} \colon [\ell] \to \N$ such that~$\pi$ is level with respect to $\mathbf{d}$. These are exactly the functions given by $\mathbf{d}(i) = \widetilde{\mathbf{d}}(U)$ for all $U \in \pi$ and $i \in U$, where $\widetilde{\mathbf{d}} \colon \pi \to \N$ is any function satisfying \[\widetilde{\mathbf{d}}(U) \mid \gcd\{\lambda_i \colon i \in U\}\] for all $U \in \pi$. Let us call such functions \emph{$\pi$-permissible}. Then
    \[
        \F{p_\lambda} = \sum_{\pi} \sum_{\text{$\widetilde{\mathbf{d}}$~$\pi$-permissible}}\sum_{\mu} \left(\prod_{d} d^{|\mathbf{d}^{-1}(d)|}  (m_d(\mu))_{|\widetilde{\mathbf{d}}^{-1}(d)|}\right) \frac{p_\mu}{z_\mu}.
    \]
    Now, let us evaluate the inner sum over~$\mu$. Expanding $p_\mu$ and $z_\mu$ gives
    \begin{align*}
        \F{p_\lambda} &= \sum_{\pi} \sum_{\text{$\widetilde{\mathbf{d}}$~$\pi$-permissible}}\sum_{\mu} \left(\prod_{d} d^{|\mathbf{d}^{-1}(d)|}  (m_d(\mu))_{|\widetilde{\mathbf{d}}^{-1}(d)|}\frac{p_d^{m_d(\mu)}}{d^{m_d(\mu)}(m_d(\mu))!}\right) \\
        &= \sum_{\pi} \sum_{\text{$\widetilde{\mathbf{d}}$~$\pi$-permissible}} \prod_{d}\left(d^{|\mathbf{d}^{-1}(d)|} \sum_{m = 0}^\infty  (m)_{|\widetilde{\mathbf{d}}^{-1}(d)|}\frac{p_d^{m}}{d^{m}m!}\right) \\
        &= \sum_{\pi} \sum_{\text{$\widetilde{\mathbf{d}}$~$\pi$-permissible}} \prod_{d}\left(d^{|\mathbf{d}^{-1}(d)|} \sum_{m = |\widetilde{\mathbf{d}}^{-1}(d)|}^\infty \frac{p_d^{m}}{d^{m}(m - |\widetilde{\mathbf{d}}^{-1}(d)|)!}\right) \\
        &= \sum_{\pi} \sum_{\text{$\widetilde{\mathbf{d}}$~$\pi$-permissible}} \prod_{d}\left(d^{|\mathbf{d}^{-1}(d)|} \left(\frac{p_d}{d}\right)^{|\widetilde{\mathbf{d}}^{-1}(d)|} \exp\left(\frac{p_d}{d}\right)\right) \\
        &= \left(\sum_{\pi} \sum_{\text{$\widetilde{\mathbf{d}}$~$\pi$-permissible}} \prod_{d}\left(d^{|\mathbf{d}^{-1}(d)|} \left(\frac{p_d}{d}\right)^{|\widetilde{\mathbf{d}}^{-1}(d)|}\right)\right) \cdot H \\
        &= \left(\sum_{\pi} \sum_{\text{$\widetilde{\mathbf{d}}$~$\pi$-permissible}} \prod_{d}\left(d^{|\mathbf{d}^{-1}(d)| - |\widetilde{\mathbf{d}}^{-1}(d)|} \cdot p_d^{|\widetilde{\mathbf{d}}^{-1}(d)|}\right)\right) \cdot H.
    \end{align*}
    We will rewrite the remaining product as a product over $U \in \pi$ instead of over $d \in \N$. Each $U \in \pi$ with $\widetilde{\mathbf{d}}(U) = d$ contributes $1$ to $|\widetilde{\mathbf{d}}^{-1}(d)|$ and contributes $|U|$ to $|\mathbf{d}^{-1}(d)|$. So we obtain
    \begin{align*}
        \F{p_\lambda} = \left(\sum_{\pi} \sum_{\text{$\widetilde{\mathbf{d}}$~$\pi$-permissible}} \prod_{U \in \pi}(\widetilde{\mathbf{d}}(U))^{|U| - 1}p_{\widetilde{\mathbf{d}}(U)}\right) \cdot H.
    \end{align*}
    Given that $\widetilde{\mathbf{d}}$ is~$\pi$-permissible, the possible values of $\widetilde{\mathbf{d}}(U)$ are exactly those $d \in \N$ that divide $\gcd\{\lambda_i \colon i \in U\}$. The choice of $\widetilde{\mathbf{d}}(U)$ can be made independently for each $U \in \pi$. Hence, we may factor the innermost sum, which finally yields
    \[
        \F{p_\lambda} = \left(\sum_{\pi} \prod_{U \in \pi}\left(\sum_{d \mid \gcd\{\lambda_i \colon i \in U\}} d^{|U| - 1}p_{d}\right)\right) \cdot H.
    \]
    The result follows from dividing both sides of this equation by~$H$.
\end{proof}
\begin{rema}
    \Cref{theorem:computation}(a) has an alternate, ``purely combinatorial'' proof. It involves exhibiting a combinatorial species $E_\lambda \colon \rmBij \to \rmFun$ with $\ch(\K E_\lambda) = h_\lambda$, and then computing $\iota^* \iota_! E_\lambda$ in terms of known combinatorial species. The details for this proof are forthcoming in a separate paper.
\end{rema}
\begin{rema}
One consequence of \Cref{theorem:computation}(c) which is not obvious \emph{a priori} is that the matrix entries of $\Fa_{\Sur}$ in the power sum basis are all nonnegative integers. The same is not true of~$\Fa$; for example, $\F{1} = H = 1 + p_1 + \frac{1}{2} (p_2 + p_{1}^2) + \cdots$ certainly has some non-integer coefficients.
\end{rema}

To illustrate the utility of \Cref{theorem:computation}, we now restate and prove \Cref{theorem:durfeebound} about the vanishing of restriction coefficients. Recall from the introduction that $D(\mu)$ is the size of the Durfee square of~$\mu$; that is, the largest integer~$d$ such that $\mu_d \geq d$.

\durfeebound*

Before proceeding to the proof, we need a few lemmas.
\begin{lemma}\label{lemma:spans}
    For any $k \geq 0$, we have \[\operatorname{span} \{e_\lambda \colon \ell(\lambda) \leq k\} = \operatorname{span} \{s_\lambda \colon \lambda_1 \leq k\}.\] \textup{(}Here $\operatorname{span} S$ refers to the additive subgroup of~$\Lambda$ generated by a subset $S \subseteq \Lambda$.\textup{)}
\end{lemma}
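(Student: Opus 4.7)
The plan is to establish the two inclusions separately using the standard change-of-basis formulas between the elementary and Schur bases. Both formulas have integer coefficients, so working over $\Z$ (i.e., additive subgroups) is not an issue.

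For the inclusion $\operatorname{span}\{e_\lambda : \ell(\lambda) \leq k\} \subseteq \operatorname{span}\{s_\lambda : \lambda_1 \leq k\}$, I would apply $\omega$ to the Kostka expansion $h_\mu = \sum_\nu K_{\nu\mu} s_\nu$ to obtain $e_\mu = \sum_\nu K_{\nu\mu} s_{\nu^T} = \sum_\rho K_{\rho^T\mu} s_\rho$. The Kostka number $K_{\rho^T\mu}$ vanishes unless $\rho^T \geq \mu$ in dominance order, and since $|\rho^T| = |\mu|$, the dominance inequality at $j = \ell(\mu)$ forces $\ell(\rho^T) \leq \ell(\mu)$, i.e., $\rho_1 \leq \ell(\mu) \leq k$. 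Hence every $e_\mu$ with $\ell(\mu)\leq k$ lies in $\operatorname{span}\{s_\rho : \rho_1 \leq k\}$.

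For the reverse inclusion, I would invoke the dual Jacobi--Trudi formula
\[
    s_\lambda = \det\bigl(e_{\lambda^T_i - i + j}\bigr)_{1 \leq i, j \leq \lambda_1},
\]
where $\ell(\lambda^T) = \lambda_1$. Expanding this $\lambda_1 \times \lambda_1$ determinant writes $s_\lambda$ as a signed sum of products of exactly $\lambda_1$ factors $e_{m}$ (interpreting $e_0 = 1$ and $e_m = 0$ for $m < 0$). Each nonzero summand, after discarding any $e_0$ factors and sorting the remaining indices into decreasing order, equals $\pm e_\nu$ for some partition $\nu$ with $\ell(\nu) \leq \lambda_1 \leq k$. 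So when $\lambda_1 \leq k$, $s_\lambda \in \operatorname{span}\{e_\nu : \ell(\nu) \leq k\}$.

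I do not anticipate any genuine obstacle here; the result is essentially an exercise in translating the symmetry between the two dual Jacobi--Trudi formulas (and equivalently, applying the involution $\omega$, which swaps $e_\lambda \leftrightarrow h_\lambda$ and sends $s_\lambda$ to $s_{\lambda^T}$, interchanging the role of ``first part'' and ``length''). The only point needing minor care is integrality of the change-of-basis coefficients, which is immediate from both Jacobi--Trudi and the Kostka number expansion.
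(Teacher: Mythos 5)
Your proposal is correct and follows essentially the same route as the paper: two inclusions, with the dual Jacobi--Trudi determinant handling $\operatorname{span}\{s_\lambda \colon \lambda_1 \leq k\} \subseteq \operatorname{span}\{e_\lambda \colon \ell(\lambda) \leq k\}$ exactly as in the paper's proof. The only (inessential) difference is in the other inclusion, where the paper cites the Pieri rule (each $e_{\mu_i}$ adds a vertical strip, so at most $\ell(\mu) \leq k$ columns arise), while you use the Kostka expansion $e_\mu = \sum_\rho K_{\rho^T\mu} s_\rho$ together with the dominance constraint $\rho^T \trianglerighteq \mu$, which yields the same bound $\rho_1 \leq \ell(\mu)$; both arguments are correct and of equal generality.
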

\begin{proof}
    By the Pieri rule, every $e_\lambda$ with $\ell(\lambda) \leq k$ is a linear combination of Schur functions $s_\mu$, where~$\mu$ is the union of at most~$k$ vertical strips. Hence,
    \[\operatorname{span} \{e_\lambda \colon \ell(\lambda) \leq k\} \subseteq \operatorname{span} \{s_\lambda \colon \lambda_1 \leq k\}.\]

    By the dual Jacobi-Trudi identity, every $s_\lambda$ with $\lambda_1 \leq k$ can be written as the determinant of a $k \times k$ matrix whose entries are elementary symmetric functions $e_r$. Hence,
    \[\operatorname{span}\{s_\lambda \colon \lambda_1 \leq k\} \subseteq \operatorname{span} \{e_\lambda \colon \ell(\lambda) \leq k\}.\] The result follows.
\end{proof}
\begin{lemma}\label{lemma:durfee}
    Let~$\lambda$ be a partition and let $d \geq 0$. Then $D(\lambda) \leq d$ if and only if there exist partitions~$\mu$,~$\nu$ with $\ell(\mu), \ell(\nu) \leq d$ such that $s_\lambda$ appears in $h_{\mu} e_{\nu}$.
\end{lemma}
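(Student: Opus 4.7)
The plan is to prove the two implications separately: the forward direction via an explicit construction and Pieri's rule, and the reverse direction via a Pieri-based length bound combined with the symmetry $c^\lambda_{\alpha\beta} = c^\lambda_{\beta\alpha}$ of Littlewood--Richardson coefficients.

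For the forward direction, assume $D(\lambda) \leq d$, which is equivalent to $\lambda_{d+1} \leq d$. I take the explicit choice $\mu = (\lambda_1, \ldots, \lambda_d)$ (the first $d$ rows of $\lambda$, padded with zeros if needed) and $\nu = (\lambda_{d+1}, \lambda_{d+2}, \ldots)^T$ (the transpose of the rest). Then $\ell(\mu) \leq d$, and $\ell(\nu) = \lambda_{d+1} \leq d$. Since $s_\mu$ appears in $h_\mu$ with Kostka coefficient $K_{\mu\mu} = 1$, it suffices to show $s_\lambda$ appears in $s_\mu \cdot e_\nu$. I will do this by iterated Pieri: starting from $s_\mu$ and multiplying successively by $e_{\nu_1}, \ldots, e_{\nu_{\ell(\nu)}}$, at step $i$ I attach the $i$-th column of the skew shape $\lambda/\mu$ (a vertical strip of size $\nu_i$) to the growing partition. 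Each intermediate shape is a valid Young diagram because the row immediately below $\mu$ reaches length at most $\ell(\nu) = \lambda_{d+1} \leq \lambda_d$. This produces a chain of positive Pieri coefficients linking $s_\mu$ to $s_\lambda$, so $s_\lambda$ appears in $s_\mu \cdot e_\nu$, and, by nonnegativity of all Pieri coefficients, in $h_\mu \cdot e_\nu$.

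For the reverse direction, suppose $s_\lambda$ appears in $h_\mu e_\nu$ with $\ell(\mu), \ell(\nu) \leq d$. Expanding $h_\mu$ and $e_\nu$ in the Schur basis and iterating Pieri, any $s_\alpha$ appearing in $h_\mu$ satisfies $\ell(\alpha) \leq \ell(\mu) \leq d$ (each horizontal strip adds at most one new row), and any $s_\beta$ appearing in $e_\nu$ satisfies $\beta_1 \leq \ell(\nu) \leq d$ (each vertical strip adds at most one cell in the first row). Therefore $c^\lambda_{\alpha\beta} > 0$ for some such pair $(\alpha, \beta)$. By the symmetry of Littlewood--Richardson coefficients, there is an LR tableau of shape $\lambda/\beta$ with content $\alpha$, whose entries lie in $\{1, \ldots, \ell(\alpha)\} \subseteq \{1, \ldots, d\}$. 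Now examine column $d+1$ of $\lambda/\beta$: since $\beta_1 \leq d$, no cells are removed from this column by $\beta$, so it spans rows $1$ through $\lambda^T_{d+1}$, and its entries are strictly increasing. A strictly increasing sequence drawn from a set of size $d$ has length at most $d$, so $\lambda^T_{d+1} \leq d$, i.e., $\lambda_{d+1} \leq d$, i.e., $D(\lambda) \leq d$.

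Neither direction is particularly deep, but I expect the bookkeeping in the forward direction---verifying that the successive Pieri additions always yield valid Young diagrams---to require the most care. In the reverse direction, the essential idea is to invoke the symmetry $c^\lambda_{\alpha\beta} = c^\lambda_{\beta\alpha}$ to switch which of $\alpha$ or $\beta$ plays the role of content partition; this is exactly what forces the strictly increasing column entries to be drawn from a set of size at most $d$, yielding the bound on $\lambda^T_{d+1}$.
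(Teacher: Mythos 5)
Your proof is correct. The forward direction is essentially the paper's own construction: the paper takes $\mu$ to be the first $D(\lambda)$ rows and $\nu$ the transpose of what lies below, and checks via the Pieri rule that $s_\lambda$ appears in $h_\mu e_\nu$; your choice of the first $d$ rows (rather than the first $D(\lambda)$ rows) is the same idea, and your column-by-column verification just spells out the chain of Pieri steps that the paper leaves terse. The reverse direction is where you genuinely diverge. The paper argues directly on $h_\mu e_\nu$ by induction on $\ell(\nu)$, using the Pieri rule to show that $\lambda_{\ell(\mu)+1} \leq \ell(\nu)$, which immediately gives $\lambda_{d+1} \leq d$. You instead expand $h_\mu$ and $e_\nu$ into Schur functions, bound the length of constituents of $h_\mu$ by $\ell(\mu)$ and the width of constituents of $e_\nu$ by $\ell(\nu)$, and then use the symmetry $c^\lambda_{\alpha\beta} = c^\lambda_{\beta\alpha}$ to produce a Littlewood--Richardson filling of $\lambda/\beta$ with entries in $\{1,\dots,d\}$, so that the strictly increasing column $d+1$ forces $\lambda^T_{d+1} \leq d$. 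Both are sound; the paper's induction is shorter and stays entirely within the Pieri rule, while your route is more explicit (it replaces the ``easily follows by induction'' step with a concrete tableau argument) at the cost of invoking LR symmetry and semistandardness of LR tableaux, and in fact your argument recovers the same refined inequality $\lambda_{\ell(\mu)+1} \leq \ell(\nu)$ if one tracks $\alpha$ and $\beta$ instead of passing directly to $d$.
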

\begin{proof}
    For the ``only if'' direction, assume that $D(\lambda) \leq d$. Take
    \begin{align*}
        \mu &= (\lambda_1, \ldots, \lambda_{D(\lambda)}) \\
        \nu &= (\lambda^T_1 - D(\lambda), \ldots, \lambda^T_{D(\lambda)} - D(\lambda)).
    \end{align*} Clearly $\ell(\mu), \ell(\nu) \leq d$. Also, the Young diagram of~$\lambda$ can be decomposed into a union of horizontal strips of lengths $\mu_1, \ldots, \mu_{\ell(\mu)}$ and vertical strips of lengths $\nu_1, \ldots, \nu_{\ell(\nu)}$, so $s_\lambda$ appears in $h_\mu e_\nu$ by the Pieri rule.

    For the ``if'' direction, it easily follows from induction on $\ell(\nu)$ and the Pieri rule that if $s_\lambda$ appears in $h_\mu e_\nu$, then $\lambda_{\ell(\mu) + 1} < \ell(\nu) + 1$ (where we take $\lambda_i = 0$ for $i > \ell(\lambda)$). Assuming that $\ell(\mu), \ell(\nu) \leq d$, we get $\lambda_{d + 1} < d + 1$, so $D(\lambda) \leq d$ as desired.
\end{proof}

\begin{proof}[Proof of \Cref{theorem:durfeebound}]
    First, observe that by multiplying both sides by~$H$, \Cref{theorem:computation}(b) can be written in the following form, using the Frobenius transform instead of the surjective Frobenius transform. For any partition~$\lambda$ with $\ell(\lambda) \leq k$, we have
    \begin{equation}\label{equation:felambda}
        \F{e_\lambda} = \sum_{M} \prod_{j \in \{0, 1\}^k}
        \begin{cases}
            h_{M(j)} & \mbox{if $j_1 + \cdots + j_\ell$ is even;} \\
            e_{M(j)} & \mbox{if $j_1 + \cdots + j_\ell$ is odd,}
        \end{cases}
    \end{equation}
    where the sum is over all functions $M \colon \{0, 1\}^k \to \N$ such that $\sum_{j \in \{0, 1\}^k} j_i M(j) = \lambda_i$ for $i = 1, \ldots, k$ (where we take $\lambda_i = 0$ for $i > \ell(\lambda)$). We do not require that $M(0, \ldots, 0) = 0$, so the sum in \eqref{equation:felambda} is infinite.

    Consider the following statements:
    \begin{itemize}
        \item[(A)] There exists a partition~$\lambda$ such that $\lambda_1 \leq k$ and $r_{\lambda}^\mu > 0$.
        \item[(C\textsubscript{1})] There exists a partition~$\lambda$ such that $\lambda_1 \leq k$ and $\langle \F{s_{\lambda}}, s_\mu \rangle \neq 0$.
        \item[(C\textsubscript{2})] There exists a partition~$\lambda$ such that $\ell(\lambda) \leq k$ and $\langle \F{e_{\lambda}}, s_\mu \rangle \neq 0$.
        \item[(C\textsubscript{3})] There exists a function $M \colon \{0, 1\}^k \to \N$ such that $s_\mu$ appears in \[\prod_{j \in \{0, 1\}^k} \begin{cases} h_{M(j)} & \mbox{if $j_1 + \cdots + j_\ell$ is even;} \\
        e_{M(j)} & \mbox{if $j_1 + \cdots + j_\ell$ is odd.} \end{cases}\]
        \item[(C\textsubscript{4})] There exist partitions $\nu, \nu'$ such that $\ell(\nu), \ell(\nu') \leq 2^{k - 1}$ and $s_\mu$ appears in $h_\nu e_{\nu'}$.
        \item[(B)] $D(\mu) \leq 2^{k - 1}$.
    \end{itemize}
    By the definition of the Frobenius transform, (A) is equivalent to (C\textsubscript{1}). By the linearity of~$\Fa$ and \Cref{lemma:spans}, (C\textsubscript{1}) is equivalent to (C\textsubscript{2}). Since each term of \eqref{equation:felambda} is Schur positive, $s_\mu$ appears in the sum if and only if it appears in one or more of its terms, so (C\textsubscript{2}) is equivalent to (C\textsubscript{3}). By taking the parts of~$\nu$ to be the nonzero values of $M(j)$ for $j_1 + \cdots + j_\ell$ even and taking the parts of $\nu'$ to be the nonzero values of $M(j)$ for $j_1 + \cdots + j_\ell$ odd, we see that (C\textsubscript{3}) is equivalent to (C\textsubscript{4}). By \Cref{lemma:durfee}, (C\textsubscript{4}) is equivalent to (B). Putting it all together, (A) is equivalent to (B), as desired.
\end{proof}

\section{Computations of the inverse surjective Frobenius transform}
\label{section:invcomputations}
We will now compute $\FSurInv{e_\lambda}$ and $\FSurInv{h_\lambda}$. In order to state our formulas, first we must recall some definitions from combinatorics on words. For a more complete introduction, see \cite[Chapter~5]{MR1475463}.

\begin{defi}
    Let~$A$ be a set. A \emph{word} over the alphabet~$A$ is a sequence $w = w_1 \cdots w_n$ with $w_1, \ldots, w_n \in A$. Given a letter $a \in A$, we write $m_a(w)$ to denote the number of times the letter~$a$ appears in~$w$.
\end{defi}
\begin{defi}[\cite{MR0067884}]
    Let~$A$ be a totally ordered set. We say that a nonempty word $w = w_1 \cdots w_n$ over the alphabet~$A$ is a \emph{Lyndon word} if it is lexicographically less than its suffix $w_i \cdots w_n$ for $i = 2, \ldots, n$. Let $\Lyndon(A)$ be the set of all Lyndon words over the alphabet~$A$.
\end{defi}
\begin{theorem}[{Witt's Formula \cite{MR2035110}}]\label{theorem:witt}
    Let $\ell > 0$ and let $t_1, \ldots, t_\ell$ be variables. For any word $w = i_1 \cdots i_n$ over $[\ell]$, denote by $t^w$ the product \[t_{i_1} \cdots t_{i_n} = \prod_{i=1}^\ell t_i^{m_i(w)}.\] Then the evaluation $(L_1 + L_2 + L_3 + \cdots)(t_1, \ldots, t_\ell)$ is equal to \[\sum_{w \in \Lyndon([\ell])} t^w.\]
\end{theorem}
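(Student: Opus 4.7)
The plan is to combine the Chen-Fox-Lyndon unique factorization theorem with Möbius inversion. Write $G(t_1, \ldots, t_\ell) = \sum_{w \in \Lyndon([\ell])} t^w$ for the right-hand side of the claimed identity; the goal is to show $G = \sum_{n \geq 1} L_n(t_1, \ldots, t_\ell)$.

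First I would invoke the Chen-Fox-Lyndon theorem: every word over $[\ell]$ factors uniquely as $u_1 u_2 \cdots u_k$ with each $u_i$ a Lyndon word and $u_1 \geq u_2 \geq \cdots \geq u_k$ lexicographically. Since equal Lyndon factors are forced to appear consecutively, this factorization corresponds to a choice of multiplicity $k_u \in \N$ for each Lyndon word $u$ (only finitely many nonzero), yielding the generating-function identity
\[
    \prod_{u \in \Lyndon([\ell])} \frac{1}{1 - t^u} \;=\; \sum_{n \geq 0} (t_1 + \cdots + t_\ell)^n \;=\; \frac{1}{1 - p_1(t_1, \ldots, t_\ell)}.
\]
Both sides have constant term $1$, so I may take formal logarithms. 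Expanding $-\log(1 - t^u) = \sum_{k \geq 1} (t^u)^k / k$ and noting that $(t^u)^k = \prod_i t_i^{k\, m_i(u)}$ is the monomial of $u$ after the substitution $t_i \mapsto t_i^k$, one obtains
\[
    \sum_{k \geq 1} \frac{G(t_1^k, \ldots, t_\ell^k)}{k} \;=\; -\log\bigl(1 - p_1(t_1, \ldots, t_\ell)\bigr).
\]

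Next I would apply Möbius inversion in the variable $k$: substituting $t_i \mapsto t_i^m$ in the displayed identity, multiplying by $\mu(m)/m$, and summing over $m$ causes the resulting double sum on the left to collapse via $\sum_{m \mid n} \mu(m) = [n = 1]$, leaving
\[
    G(t_1, \ldots, t_\ell) \;=\; \sum_{m \geq 1} \frac{\mu(m)}{m} \cdot \bigl(-\log(1 - p_m(t_1, \ldots, t_\ell))\bigr) \;=\; \sum_{m, j \geq 1} \frac{\mu(m)\, p_m^j}{m j}.
\]
Setting $n = mj$ and regrouping by $n$, the right-hand side becomes $\sum_{n \geq 1} \frac{1}{n} \sum_{m \mid n} \mu(m)\, p_m^{n/m}$, which is exactly $\sum_{n \geq 1} L_n(t_1, \ldots, t_\ell)$, as desired.

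The main obstacle I anticipate is purely bookkeeping, namely ensuring that all the formal power series manipulations (the logarithm, the interchanges of summation, and the Möbius inversion) are justified. This follows because filtering by total degree in $t_1, \ldots, t_\ell$ makes every sum a finite sum on each graded piece, so there are no convergence issues to worry about; the Chen-Fox-Lyndon theorem is classical and standard.
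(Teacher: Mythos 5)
Your argument is correct. Note that the paper does not prove this statement at all: it is quoted as Witt's formula with a citation to the literature, so there is no in-paper proof to compare against. Your proof is the standard self-contained one, and every step checks out: the Chen--Fox--Lyndon theorem gives a bijection between words over $[\ell]$ and finitely supported multiplicity functions on $\Lyndon([\ell])$ (since the non-increasing arrangement of a multiset of Lyndon words is unique and content is additive under concatenation), which yields $\prod_{u \in \Lyndon([\ell])} (1 - t^u)^{-1} = (1 - p_1(t_1,\ldots,t_\ell))^{-1}$; taking formal logarithms gives $\sum_{k \geq 1} \frac{1}{k} G(t_1^k,\ldots,t_\ell^k) = -\log(1 - p_1)$; and the M\"obius inversion (substitute $t_i \mapsto t_i^m$, weight by $\mu(m)/m$, sum over $m$, and collapse via $\sum_{m \mid n} \mu(m) = [n=1]$) correctly isolates $G = \sum_{m,j \geq 1} \frac{\mu(m)}{mj} p_m^j = \sum_{n \geq 1} \frac{1}{n}\sum_{d \mid n} \mu(d) p_d^{n/d}$, matching the paper's definition of $L_n$ evaluated at $t_1,\ldots,t_\ell$. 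Your remark that the degree filtration makes all interchanges finite in each graded piece is exactly the right justification, since $G(t_1^{mk},\ldots,t_\ell^{mk})$ only contributes in degrees at least $mk$. The one small thing worth making explicit in a final write-up is that ``evaluation'' of the symmetric power series $L_1 + L_2 + \cdots$ at $t_1,\ldots,t_\ell$ means setting $x_i = t_i$ for $i \leq \ell$ and $x_i = 0$ otherwise, under which $p_m$ becomes $p_m(t_1,\ldots,t_\ell)$; this is what licenses identifying your final expression with the left-hand side of the theorem.
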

\begin{theorem}[{Chen--Fox--Lyndon Theorem \cite{MR0102539}}]\label{theorem:chenfoxlyndon}
    Let~$A$ be a totally ordered set. Any word~$w$ over the alphabet~$A$ has a unique \emph{Lyndon factorization}; that is, an expression as a (lexicographically) non-increasing concatenation of Lyndon words.
\end{theorem}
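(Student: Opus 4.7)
The plan is to prove existence and uniqueness separately, both hinging on the concatenation lemma: if $u, v \in \Lyndon(A)$ satisfy $u <_{\text{lex}} v$, then $uv$ is also a Lyndon word. To establish this lemma, I would verify that $uv$ is strictly smaller than each of its proper nonempty suffixes. Such a suffix is either of the form $u'v$ with $u'$ a proper nonempty suffix of $u$---in which case the Lyndon property of $u$ gives $u <_{\text{lex}} u'$, from which $uv <_{\text{lex}} u'v$ follows at once---or of the form $v'$ with $v'$ a nonempty suffix of $v$. For the second form it suffices to show $uv <_{\text{lex}} v$, since $v \leq_{\text{lex}} v'$ by the Lyndon property of $v$. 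When $u$ is not a prefix of $v$, the hypothesis $u <_{\text{lex}} v$ gives this directly; when $u$ is a proper prefix so that $v = ut$, the inequality $uv <_{\text{lex}} v$ reduces to $ut <_{\text{lex}} t$, which is precisely $v <_{\text{lex}} t$, and this again follows from the Lyndon property of $v$ since $t$ is a proper suffix.

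For existence, I would start from the trivial factorization of $w$ into single letters (each vacuously Lyndon) and iteratively replace any adjacent pair $(\ell_i, \ell_{i+1})$ with $\ell_i <_{\text{lex}} \ell_{i+1}$ by the single factor $\ell_i \ell_{i+1}$, which is Lyndon by the concatenation lemma. Each merge strictly decreases the number of factors, so the process terminates, and the terminal factorization is non-increasing by construction.

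For uniqueness, I would establish the following characterization: in any non-increasing Lyndon factorization $w = \ell_1 \cdots \ell_k$, the last factor $\ell_k$ is the lexicographically smallest nonempty suffix of $w$. This forces $\ell_k$ to depend only on $w$, and induction on $|w|$ applied to the truncated word $\ell_1 \cdots \ell_{k-1}$ then yields uniqueness of the full factorization. To prove the characterization, every nonempty suffix of $w$ has the form $\ell_j \ell_{j+1} \cdots \ell_k$ or $\ell'_j \ell_{j+1} \cdots \ell_k$ where $\ell'_j$ is a proper nonempty suffix of $\ell_j$. Combining $\ell_k \leq_{\text{lex}} \ell_j$ (from the non-increasing assumption) with the Lyndon property of $\ell_j$ (which gives $\ell_j <_{\text{lex}} \ell'_j$ in the second form) shows that $\ell_k$ is lexicographically no larger than the suffix.

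The main obstacle I anticipate is the case when one word is a proper prefix of another during a lexicographic comparison---a situation that cannot be resolved by locating a single differing letter and instead requires invoking the defining Lyndon inequality at a nontrivial moment. This appears both in proving $uv <_{\text{lex}} v$ in the concatenation lemma and in comparing $\ell_k$ to suffixes of the form $\ell_j \cdots \ell_k$ in the characterization above; handling these cases cleanly is the technical heart of the argument.
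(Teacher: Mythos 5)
The paper does not prove this statement: it is quoted as the classical Chen--Fox--Lyndon theorem with a citation to the original paper, and it is only used as a black box (in \Cref{corollary:fsurinve}). So there is no internal proof to compare against; your proposal has to stand on its own, and it does. It is the standard argument: the concatenation lemma ($u <_{\mathrm{lex}} v$ with $u,v$ Lyndon implies $uv$ Lyndon), a greedy merging of adjacent increasing factors for existence, and the characterization of the last factor as the lexicographically smallest nonempty suffix for uniqueness, followed by induction on the length of $w$. The case analysis in the lemma is right: for suffixes $u'v$ the prefix degeneracy cannot occur (a proper suffix $u'$ of $u$ with $u <_{\mathrm{lex}} u'$ is never a prefix of $u$), and for suffixes of $v$ the reduction to $uv <_{\mathrm{lex}} v$, with the subcase $v = ut$ handled via $v <_{\mathrm{lex}} t$, is exactly the point where the Lyndon property must be invoked; you correctly flag the analogous prefix issue when comparing $\ell_k$ with $\ell_j \ell_{j+1}\cdots \ell_k$ (if $\ell_k$ is a prefix of $\ell_j$ the weak inequality $\ell_k \leq_{\mathrm{lex}} s$ still holds, which is all the characterization needs, since a suffix equal in value to $\ell_k$ is $\ell_k$ itself). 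No gaps.
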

\begin{defi}
    Let~$w$ be a word over a totally ordered alphabet. Define $\pi(w)$ to be the partition obtained by listing the number of times each Lyndon word appears in the Lyndon factorization of~$w$, and then sorting the resulting positive numbers in decreasing order.
\end{defi}
\begin{exam}
    If $A = \{1, 2\}$ and $w = 21212121111$, then the Lyndon factorization of~$w$ is $w = (2)(12)(12)(12)(1)(1)(1)(1)$. The Lyndon words appearing in this factorization are $2$, $12$, and $1$, which appear once, three times, and four times, respectively, so $\pi(w) = (4, 3, 1)$.
\end{exam}

Now, we are ready to compute $\FSurInv{e_\lambda}$ and $\FSurInv{h_\lambda}$. The cleanest way to state our results is to use the series $H(t)$ from \Cref{section:preliminaries}.
\begin{theorem}
\label{theorem:invcomputation}
Let $\ell > 0$ and let $t_1, \ldots, t_\ell$ be variables.
\begin{enumerate}[label=(\alph*)]
    \item For any word $w = i_1 \cdots i_n$ over $[\ell]$, denote by $t^w$ the product \[t_{i_1} \cdots t_{i_n} = \prod_{i=1}^\ell t_i^{m_i(w)}.\]Then \[\FSurInv{\frac{1}{\prod_{i = 1}^\ell H(t_i)}} = \frac{1}{\prod_{w \in \Lyndon([\ell])} H(t^w)}.\]
    \item For any word $w = (i_1, j_1) \cdots (i_n, j_n)$ over $[\ell]^2$ (ordered lexicographically), denote by $t^w$ the product \[(t_{i_1} t_{j_1}) \cdots (t_{i_n} t_{j_n}) = \prod_{i, j = 1}^\ell (t_i t_j)^{m_{(i, j)}(w)}.\] Then \[\FSurInv{\prod_{i = 1}^\ell H(t_i)} = \frac{\prod_{w \in \Lyndon([\ell])} H(t^w)}{\prod_{w \in \Lyndon([\ell]^2)} H(t^w)}.\]
\end{enumerate}
\end{theorem}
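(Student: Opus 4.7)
The plan is to verify each identity by applying $\Fa_{\Sur}$ to the proposed right-hand side and confirming it equals the input, following the same strategy used in the proof of Theorem~\ref{theorem:computation}. Write $R$ and $S$ for the right-hand sides of (a) and (b) respectively, viewed as elements of $\Lambda\llbracket t_1, \ldots, t_\ell \rrbracket$; both are well defined because each factor $H(t^w)^{\pm 1}$ contributes terms of $t$-degree at least $|w|$. Proposition~\ref{proposition:rootsofunityformula} reduces the task to evaluating $R(\Xi_\mu)$ and $S(\Xi_\mu)$: each factor $H(s)$ specializes to $\prod_j 1/(1-s^{\mu_j})$ at $\Xi_\mu$, so the evaluations become explicit products over Lyndon words.

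The key combinatorial input is the Chen--Fox--Lyndon theorem (Theorem~\ref{theorem:chenfoxlyndon}): for any totally ordered alphabet $A$ with parameters $(s_a)_{a \in A}$, unique decreasing Lyndon factorization gives $\prod_{w \in \Lyndon(A)}(1 - s^w) = 1 - \sum_{a \in A} s_a$. I will apply this in two forms. With $A = [\ell]$ and $s_i = t_i^k$ it yields $\prod_{w \in \Lyndon([\ell])}(1 - (t^w)^k) = 1 - p_k(t_1, \ldots, t_\ell)$. With $A = [\ell]^2$ (lexicographic) and $s_{(i,j)} = (t_i t_j)^k$ it yields $\prod_{w \in \Lyndon([\ell]^2)}(1 - (t^w)^k) = 1 - p_k(t)^2$, since $\sum_{i,j \in [\ell]} t_i t_j = p_1(t)^2$.

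For part (a), these identities give $R(\Xi_\mu) = \prod_{w}\prod_j(1 - (t^w)^{\mu_j}) = \prod_j(1 - p_{\mu_j}(t))$. Combining with Proposition~\ref{proposition:rootsofunityformula} and the exponential identity $\sum_\mu \prod_j \psi(\mu_j) p_\mu/z_\mu = \exp\!\bigl(\sum_k \psi(k) p_k/k\bigr)$ (already used in the proof of Theorem~\ref{theorem:computation}), with $\psi(k) = 1 - p_k(t)$, yields
\[\F{R} = \exp\Bigl(\sum_k \frac{p_k}{k}(1 - p_k(t))\Bigr) = H \cdot \prod_{i=1}^\ell \frac{1}{H(t_i)},\]
so $\FSur{R} = \prod_i 1/H(t_i)$, proving (a). Part (b) is analogous: the two CFL identities combine to give $S(\Xi_\mu) = \prod_j (1 - p_{\mu_j}(t)^2)/(1 - p_{\mu_j}(t)) = \prod_j(1 + p_{\mu_j}(t))$, and the exponential identity with $\psi(k) = 1 + p_k(t)$ yields $\F{S} = H \cdot \prod_i H(t_i)$, hence $\FSur{S} = \prod_i H(t_i)$. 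The main conceptual step is identifying the $[\ell]^2$-version of Chen--Fox--Lyndon as the right tool for (b): the factor $1 + p_k$ that distinguishes (b) from (a) arises precisely as the ratio $(1 - p_k^2)/(1 - p_k)$, which is exactly what places the Lyndon words over $[\ell]^2$ in the denominator.
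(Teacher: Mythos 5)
Your verification is correct, but it takes a genuinely different route from the paper. The paper proves \Cref{theorem:invcomputation} ``forwards from the inverse'': it uses \Cref{theorem:summary}(c) (hence Cadogan's theorem that $\omega(L_1)-\omega(L_2)+\cdots$ is the plethystic inverse of $H_+$) to realize $\Fa_{\Sur}^{-1}$ as the adjoint of plethysm by $\tilde L$, then computes $\langle f, \cdot\rangle$ for arbitrary $f$ via \Cref{lemma:plethysmhall}, associativity of plethysm, the monomial substitution rule, and Witt's formula (\Cref{theorem:witt}); part (b) additionally requires the identity $\tilde L = L - L[p_1^2]$, which is where $\Lyndon([\ell]^2)$ enters. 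You instead apply the forward transform to the claimed right-hand sides $R$ and $S$ and check $\FSur{R}=\prod_i H(t_i)^{-1}$, $\FSur{S}=\prod_i H(t_i)$, exactly in the style of the paper's proof of \Cref{theorem:computation}: evaluate at $\Xi_\mu$ via \Cref{proposition:rootsofunityformula} and resum as an exponential. Your key input is the Chen--Fox--Lyndon generating-function identity $\prod_{w\in\Lyndon(A)}(1-s^w)=1-\sum_{a\in A}s_a$ (for $A=[\ell]$ with $s_i=t_i^k$, and $A=[\ell]^2$ with $s_{(i,j)}=(t_it_j)^k$, giving $1-p_k(t)$ and $1-p_k(t)^2$ respectively), which replaces both Witt's formula and the $\tilde L = L-L[p_1^2]$ computation; the factor $1+p_k=(1-p_k^2)/(1-p_k)$ then produces the $[\ell]^2$ denominator. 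This buys a more self-contained argument: you avoid Cadogan's theorem, \Cref{theorem:summary}(c), and the Loehr--Remmel plethysm lemmas, needing only the invertibility of $\Fa_{\Sur}$ to convert the forward verification into the stated inverse formulas. The trade-off is that the paper's argument explains where the answer comes from within the general adjoint-to-plethysm framework, while yours verifies a guessed answer. Two small points to make explicit if you write this up: each $t$-coefficient of $R$ and $S$ lies in $\Lambda$ (so $\Fa$, $\Fa_{\Sur}$ apply coefficientwise, and only finitely many Lyndon factors contribute in each $t$-degree), and uniqueness of the factorization $\F{f}=\FSur{f}\cdot H$ (equivalently, invertibility of $H$ in $\overline{\Lambda}$) is what lets you read off $\FSur{R}$ and $\FSur{S}$ from $\F{R}=H\cdot\prod_i H(t_i)^{-1}$ and $\F{S}=H\cdot\prod_i H(t_i)$.
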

Before we proceed to the proof of \Cref{theorem:invcomputation}, we will prove some corollaries that illustrate how to use it.
\begin{coro}
    For any~$r$, we have \[\FSurInv{h_r} = \sum_{k = 0}^{\lfloor r/2 \rfloor} (-1)^k h_{r - 2k} e_k.\]
\end{coro}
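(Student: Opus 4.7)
The plan is to specialize Theorem~\ref{theorem:invcomputation}(b) to the single-variable case $\ell = 1$. When there is only one letter in the alphabet $[1]$, the set $\Lyndon([1])$ consists of just the one-letter word $1$, and $\Lyndon([1]^2) = \Lyndon(\{(1,1)\})$ consists of just the one-letter word $(1,1)$. So the theorem collapses to
\[
\FSurInv{H(t_1)} = \frac{H(t_1)}{H(t_1^2)}.
\]
Because $\Fa_{\Sur}^{-1}$ is $\Z$-linear and $H(t_1) = \sum_{r \geq 0} h_r t_1^r$, the symmetric function $\FSurInv{h_r}$ is simply the coefficient of $t_1^r$ in this product.

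The next step is to rewrite the denominator using the identity $E(t) = 1/H(-t)$ recalled in Section~\ref{section:preliminaries}. That gives $1/H(t_1^2) = E(-t_1^2) = \sum_{k \geq 0}(-1)^k e_k t_1^{2k}$, so
\[
\FSurInv{H(t_1)} = \left(\sum_{r \geq 0} h_r t_1^r\right)\left(\sum_{k \geq 0} (-1)^k e_k t_1^{2k}\right).
\]
Extracting the coefficient of $t_1^r$ on the right yields $\sum_{k=0}^{\lfloor r/2 \rfloor} (-1)^k h_{r-2k} e_k$, which is the desired formula.

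There is no real obstacle here; the corollary is a direct specialization of Theorem~\ref{theorem:invcomputation}(b) combined with the standard power series identity $H(-t)E(t) = 1$. The only thing to verify carefully is the counting of Lyndon words when $\ell = 1$, but this is immediate from the definition since any word over a one-letter alphabet is the power $1^n$, and such a word is Lyndon if and only if $n = 1$.
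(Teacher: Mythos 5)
Your proposal is correct and follows the paper's own argument essentially verbatim: specialize Theorem~\ref{theorem:invcomputation}(b) to $\ell = 1$, identify the unique Lyndon words over $[1]$ and $[1]^2$, rewrite $H(t)/H(t^2) = H(t)E(-t^2)$, and extract the coefficient of $t^r$. The extra care you take in justifying the Lyndon word count and the linearity of $\Fa_{\Sur}^{-1}$ is fine but adds nothing beyond what the paper's proof already does.
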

\begin{proof}
    Take $\ell = 1$ in \Cref{theorem:invcomputation}(b). In this case, $1$ is the only Lyndon word over $[1]$ and $(1, 1)$ is the only Lyndon word over $[1]^2$. So, with $t = t_1$,
    \[\FSurInv{H(t)} = \frac{H(t)}{H(t^2)} = H(t) E(-t^2).\] The result follows from taking the coefficient of $t^r$.
\end{proof}
\begin{coro}\label{corollary:fsurinve}
    Let $\lambda = (\lambda_1, \ldots, \lambda_\ell)$ be a sequence of nonnegative integers \textup{(}not necessarily weakly decreasing\textup{)}. Then \[\FSurInv{e_\lambda} = \sum_{w \in W} (-1)^{|\lambda| - |\pi(w)|} e_{\pi(w)},\] where~$W$ is the set of all words~$w$ over $[\ell]$ such that $m_i(w) = \lambda_i$ for all $i \in [\ell]$.
\end{coro}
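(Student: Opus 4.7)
The plan is to derive this corollary by extracting the coefficient of $t_1^{\lambda_1} \cdots t_\ell^{\lambda_\ell}$ from both sides of the generating function identity in \Cref{theorem:invcomputation}(a). The key observation is that $1/H(t) = E(-t) = \sum_{n \geq 0} (-1)^n e_n t^n$, so both sides of that identity expand naturally as generating series in elementary symmetric functions.

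On the left, $\prod_{i=1}^\ell E(-t_i)$ has coefficient $(-1)^{|\lambda|} e_\lambda$ at $t_1^{\lambda_1} \cdots t_\ell^{\lambda_\ell}$, so (since $\Fa_{\Sur}^{-1}$ acts coefficientwise on the variables $t_i$) the left-hand side contributes $(-1)^{|\lambda|}\FSurInv{e_\lambda}$. On the right, $\prod_{w \in \Lyndon([\ell])} E(-t^w)$ expands to a sum indexed by multiplicity functions $k \colon \Lyndon([\ell]) \to \N$ satisfying $\sum_{w} k_w\, m_i(w) = \lambda_i$ for every $i \in [\ell]$; each such $k$ contributes $(-1)^{\sum_w k_w}\prod_w e_{k_w}$.

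The conceptual crux is to recognize this sum over multiplicity functions as a sum over words in $W$. By the Chen--Fox--Lyndon Theorem (\Cref{theorem:chenfoxlyndon}), admissible tuples $(k_w)$ are in bijection with words in $W$: given $k$, I would form the unique word whose Lyndon factorization consists of $k_u$ copies of each $u \in \Lyndon([\ell])$, listed in weakly decreasing lexicographic order; the constraint $\sum_w k_w m_i(w) = \lambda_i$ is exactly the statement that this word lies in $W$. Under this bijection the nonzero values of $k_u$, sorted in decreasing order, form the partition $\pi(w)$, so $\sum_w k_w = |\pi(w)|$ and $\prod_w e_{k_w} = e_{\pi(w)}$. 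The right side then becomes $\sum_{w \in W}(-1)^{|\pi(w)|}e_{\pi(w)}$; equating with the left side and multiplying through by $(-1)^{|\lambda|}$ yields the stated formula. I do not expect any real obstacle here; the argument is a formal coefficient extraction, and the only nontrivial step is bookkeeping through the Chen--Fox--Lyndon bijection to identify the multiplicity data $(k_w)$ with the partition $\pi(w)$.
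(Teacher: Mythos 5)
Your proposal is correct and is essentially the paper's own argument: the paper likewise expands the right-hand side of \Cref{theorem:invcomputation}(a) using $1/H(t^w) = E(-t^w)$ into a sum over finitely supported multiplicity functions on $\Lyndon([\ell])$, converts this to a sum over words via the Chen--Fox--Lyndon bijection (with the multiplicities recording $\pi(w)$), and extracts the coefficient of $t_1^{\lambda_1}\cdots t_\ell^{\lambda_\ell}$. There is no substantive difference between the two proofs.
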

\begin{proof}
    First, the right-hand side of \Cref{theorem:computation}(a) can be written
    \begin{equation}\label{equation:lyndonfunctions}
        \prod_{w \in \Lyndon(\ell)} \left(\sum_{r = 0}^\infty (-1)^r e_r (t^w)^r\right) = \sum_{\mathbf{r}} \prod_{w \in \Lyndon(\ell)} (-1)^{\mathbf{r}(w)} e_{\mathbf{r}(w)} (t^w)^{\mathbf{r}(w)}
    \end{equation}
    where the sum is over all finitely supported functions $\mathbf{r} \colon \Lyndon([\ell]) \to \N$. By the Chen--Fox--Lyndon theorem (\Cref{theorem:chenfoxlyndon}), there is a bijection \[\{\text{words over $[\ell]$}\} \underset{\phi}{\longleftrightarrow} \{\text{finitely supported functions $\Lyndon([\ell]) \to \N$}\},\] where $(\phi(w))(w')$ is the number of times that $w'$ appears in the Lyndon factorization of~$w$. Using this bijection, we can rewrite \eqref{equation:lyndonfunctions} as a sum over all words~$w$ over $[\ell]$:
    \[\FSurInv{\frac{1}{\prod_{i = 1}^\ell H(t_i)}} = \sum_{w} (-1)^{|\pi(w)|} e_{\pi(w)} t^w.\]
    The result follows from taking the coefficient of $t_1^{\lambda_1} \cdots t_{\ell}^{\lambda_{\ell}}$.
\end{proof}
\begin{coro}\label{corollary:e1ell}
    Let $\ell \geq 0$. Then \[\FSurInv{e_1^\ell} = e_1(e_1 - 1) \cdots (e_1 - \ell + 1).\]
\end{coro}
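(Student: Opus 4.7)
The plan is to derive the corollary from Corollary~\ref{corollary:fsurinve} by specializing $\lambda = (1^\ell)$ and then identifying the resulting sum over $W$ with the falling factorial expansion of $e_1(e_1 - 1)\cdots(e_1 - \ell + 1)$ via Stirling numbers of the first kind.

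First I would apply Corollary~\ref{corollary:fsurinve} with $\lambda = (1, 1, \ldots, 1)$. The set $W$ then consists of all words over $[\ell]$ in which every letter $1, \ldots, \ell$ appears exactly once; that is, $W = \Sym_\ell$ (permutations written in one-line notation). Any word in $W$ has all distinct letters, so no Lyndon factor in its Chen--Fox--Lyndon factorization can occur more than once. Hence $\pi(w) = (1^{k(w)})$, where $k(w)$ denotes the number of factors in the Lyndon factorization of $w$, and $e_{\pi(w)} = e_1^{k(w)}$. Substituting into Corollary~\ref{corollary:fsurinve} yields
\[
\FSurInv{e_1^\ell} = \sum_{w \in \Sym_\ell} (-1)^{\ell - k(w)} e_1^{k(w)}.
\]

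The next step is to prove the generating function identity $\sum_{w \in \Sym_\ell} x^{k(w)} = x(x+1)(x+2)\cdots(x+\ell - 1)$. Substituting $x \mapsto -x$ and multiplying by $(-1)^\ell$ turns this into $\sum_{w} (-1)^{\ell - k(w)} x^{k(w)} = x(x-1)\cdots(x-\ell+1)$, which is exactly the required identity once we set $x = e_1$. To establish the generating function identity I would exhibit a bijection $\Phi \colon \Sym_\ell \to \Sym_\ell$ (essentially a variant of Foata's first fundamental transformation) sending a permutation $\sigma$ with $c(\sigma)$ cycles to a one-line word $\Phi(\sigma)$ with $k(\Phi(\sigma)) = c(\sigma)$ Lyndon factors. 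Concretely, write each cycle of $\sigma$ as a sequence starting with its minimum element; such a sequence is automatically a Lyndon word, since its first letter is strictly smaller than every subsequent letter. Then list the cycles in strictly decreasing order of their first letters and concatenate. By the Chen--Fox--Lyndon theorem (Theorem~\ref{theorem:chenfoxlyndon}), this concatenation is precisely the Lyndon factorization of the resulting word, so $k(\Phi(\sigma)) = c(\sigma)$. The inverse map reads off the Lyndon factors of a word, interprets each as a cycle starting from its minimum, and assembles these into a permutation. Combined with the classical identity $\sum_{\sigma \in \Sym_\ell} x^{c(\sigma)} = x(x+1)\cdots(x+\ell-1)$ for unsigned Stirling numbers of the first kind, this proves the corollary.

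The main conceptual obstacle is verifying the bijection $\Phi$: one must check both that each cycle written from its minimum is Lyndon (a short lexicographic argument), and that sorting these Lyndon words in decreasing order of first letter coincides with sorting them in decreasing lexicographic order (which holds because distinct cycles have distinct first letters, as each letter of $[\ell]$ lies in exactly one cycle). Once these two elementary points are in place, the identification with the falling factorial is immediate.
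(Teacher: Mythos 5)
Your proposal is correct and follows essentially the same route as the paper: specialize \Cref{corollary:fsurinve} at $\lambda = (1^\ell)$, observe that $\pi(w) = (1^k)$ with $k$ the number of Lyndon factors, and invoke the generating function $\sum_k \stirlingi{\ell}{k} x^k = x(x+1)\cdots(x+\ell-1)$ for unsigned Stirling numbers of the first kind. The only (minor) difference is how that statistic is matched to a classical one: the paper notes that the Lyndon factors begin exactly at the left-to-right minima of $w$, while you route through the Foata-style bijection identifying Lyndon factors with cycles; both are standard and your verification of the bijection is sound.
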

\begin{proof}
    Take $\lambda = (1^\ell)$ in \Cref{corollary:fsurinve}. Then $W = \Sym_\ell$. Moreover, the Lyndon factorization of any $w \in \Sym_\ell$ contains only distinct Lyndon words, one beginning with each left-to-right minimum of~$w$ (that is, each letter $w_i$ such that $w_i < w_j$ for all $j < i$). So $\pi(w) = (1^k)$, where~$k$ is the number of left-to-right minima of~$w$. It follows that \[\FSurInv{e_1^\ell} = \sum_{k} (-1)^{\ell - k}\stirlingi{\ell}{k} e_1^k,\] where $\stirlingi{\ell}{k}$ (a \emph{Stirling number of the first kind}) is the number of permutations $w \in \Sym_\ell$ with~$k$ left-to-right minima. This is well-known \cite[Chapter~6.1]{MR1397498} to be equal to $e_1(e_1 - 1) \cdots (e_1 - \ell + 1)$.
\end{proof}
More generally, we have the following.
\begin{coro}\label{corollary:fsurinveprod}
    Let $\lambda = (\lambda_1, \ldots, \lambda_\ell)$ be a sequence of nonnegative integers \textup{(}not necessarily weakly decreasing\textup{)} and let $k \geq 0$. For any word~$w$ over $\{0\} \cup [\ell]$, write $p_+(w)$ to denote the longest prefix of~$w$ that does not contain the letter $0$. Then \[\FSurInv{e_\lambda e_1^k} = \left(\sum_{w \in W} (-1)^{|\lambda| - |\pi(p_+(w))|} e_{\pi(p_+(w))} \right) \cdot e_1 (e_1 - 1) \cdots (e_1 - k + 1),\] where~$W$ is the set of all words~$w$ over $\{0\} \cup [\ell]$ such that $m_0(w) = k$ and $m_i(w) = \lambda_i$ for all $i \in [\ell]$.
\end{coro}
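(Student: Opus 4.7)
My plan is to deduce this from Corollary \ref{corollary:fsurinve} applied to the sequence $\lambda' = (\lambda_1, \ldots, \lambda_\ell, 1, 1, \ldots, 1)$ of length $\ell + k$, so that $e_{\lambda'} = e_\lambda e_1^k$ and
\[
    \FSurInv{e_\lambda e_1^k} = \sum_{w' \in W'} (-1)^{|\lambda| + k - |\pi(w')|} e_{\pi(w')},
\]
where $W'$ consists of words over $[\ell + k]$ using each letter $i \in [\ell]$ exactly $\lambda_i$ times and each ``distinguished'' letter $\ell + j$ (for $j \in [k]$) exactly once. Replacing each distinguished letter in $w'$ by $0$ yields $w \in W$; recording the distinguished letters in the order they appear at the $0$-positions of $w$ defines a permutation $\sigma \in \Sym_k$. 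Together these give a bijection $W' \cong W \times \Sym_k$.

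The key structural lemma is that, in the standard total order $1 < 2 < \cdots < \ell + k$, any Lyndon word beginning with a distinguished letter consists entirely of distinguished letters, because a Lyndon word begins with its minimum letter. In the non-increasing Chen--Fox--Lyndon factorization of any $w' \in W'$, such ``purely distinguished'' factors must precede all factors beginning with a letter in $[\ell]$; together they exhaust the longest prefix of $w'$ consisting only of distinguished letters, which corresponds to the initial $0$-run of $w$. Hence the Lyndon factorization of $w'$ is the concatenation of two independent Lyndon factorizations: that of the distinguished prefix (a word of distinct distinguished letters) and that of the remaining suffix.

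Using this decomposition alongside Theorem \ref{theorem:invcomputation}(a) with $\ell + k$ variables, I would rewrite $\FSurInv{e_\lambda e_1^k}$ as a sum indexed by (a) a set partition $\{J_1, \ldots, J_p\}$ of $\{\ell+1, \ldots, \ell+k\}$ together with a Lyndon word over $[\ell + k]$ for each part (using the letters of $J_r$ each once, plus some small-letter content $\mu^{(r)}$), and (b) a factorization of the residual small content $\lambda - \mu$ (where $\mu = \sum_r \mu^{(r)}$) into Lyndon words over $[\ell]$. Folding (b) back into $\FSurInv{e_{\lambda - \mu}}$ via Corollary \ref{corollary:fsurinve} produces an intermediate identity of the form $\FSurInv{e_\lambda e_1^k} = \sum_{\text{(a)}} (-1)^{p + k + |\mu|} e_1^p \FSurInv{e_{\lambda - \mu}}$.

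The main obstacle is then to match this against the product form on the right-hand side, namely $\sum_{w \in W} (-1)^{|\lambda| - |\pi(p_+(w))|} e_{\pi(p_+(w))}$ times $e_1(e_1 - 1) \cdots (e_1 - k + 1) = \FSurInv{e_1^k}$ (by Corollary \ref{corollary:e1ell}). The subtlety is that in the expansion just derived, small letters may be absorbed into ``mixed'' Lyndon factors (factors beginning with a small letter but containing one or more distinguished letters), whereas on the right-hand side small letters contribute to $p_+(w)$ only when they appear before any distinguished letter of $w'$. Reconciling these two groupings requires a careful bijective regrouping that transfers small letters out of mixed factors, using the uniqueness of Lyndon factorization on both sides, and thereby identifies the distinguished part of the sum with $\FSurInv{e_1^k}$ while leaving behind the prefix-based sum over $W$.
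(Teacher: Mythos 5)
There is a genuine gap at the decisive step. Your setup (append the $k$ ones, so the distinguished letters $\ell+1,\ldots,\ell+k$ are the \emph{largest}) makes the needed termwise factorization false: under your bijection $W' \cong W \times \Sym_k$ it is \emph{not} true that $\pi(w')$ is the concatenation of $\pi(p_+(w))$ and $\pi(\sigma)$. For instance, with $\ell = k = 1$, $\lambda = (1)$, the word $w' = 12$ (letter $2$ distinguished) is itself Lyndon, so its summand is $-e_1$, while the corresponding pair $(w,\sigma) = (10, \mathrm{id})$ would contribute $e_{(1)}\cdot e_{(1)} = e_1^2$; the identity only holds after cancellation across terms. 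Your intermediate identity $\FSurInv{e_\lambda e_1^k} = \sum (-1)^{k+p+|\mu|} e_1^p\, \FSurInv{e_{\lambda-\mu}}$ is correct (it follows from viewing words as multisets of Lyndon factors and splitting off the factors containing distinguished letters), but the remaining step --- the ``careful bijective regrouping'' that transfers small letters out of mixed factors and matches this sum with $\bigl(\sum_{w \in W} (-1)^{|\lambda|-|\pi(p_+(w))|} e_{\pi(p_+(w))}\bigr)\, e_1(e_1-1)\cdots(e_1-k+1)$ --- is exactly the content of the corollary, and you only assert that it exists. As the example shows, it cannot be a summand-by-summand matching, so some nontrivial cancellation argument would have to be supplied; the proposal as written does not contain it.

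The difficulty disappears if you reverse the convention, which is what the paper does: apply \Cref{corollary:fsurinve} to $\lambda' = (1^k,\lambda_1,\ldots,\lambda_\ell)$, so the distinguished letters $1,\ldots,k$ are the \emph{smallest}. Then any Lyndon factor containing a distinguished letter must \emph{begin} with one (its first letter is its minimum), such factors all have multiplicity one, the purely-original factors form exactly the prefix of $w'$ before the first distinguished letter (giving $p_+(\phi_1(w'))$), and the number of distinguished-initial factors equals the number of Lyndon factors (left-to-right minima) of the distinguished subword $\phi_2(w')$ --- the original letters absorbed into those factors are irrelevant to $\pi$. Hence $\pi(w')$ is literally the concatenation of $\pi(p_+(\phi_1(w')))$ and $\pi(\phi_2(w'))$, the sum over $W'$ factors termwise over $W \times \Sym_k$, and the second factor is $e_1(e_1-1)\cdots(e_1-k+1)$ as in \Cref{corollary:e1ell}. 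I recommend redoing your argument with the ones prepended rather than appended; with your current ordering the proof is not complete.
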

\begin{proof}
    Let \[\lambda' = (\underbrace{1, \ldots, 1}_{k}, \lambda_1, \ldots, \lambda_\ell)\] and let $W'$ be the set of all words~$w$ over $[k + \ell]$ such that $m_i(w) = \lambda'_i$ for all $i \in [k + \ell]$. By \Cref{corollary:fsurinve}, we have
    \begin{equation}\label{eq:wprime}
        \FSurInv{e_\lambda e_1^k} = \sum_{w \in W'} (-1)^{|\lambda| + k - |\pi(w)|} e_{\pi(w)}.
    \end{equation}
    Now, define the function \[\phi \colon W' \to W \times \Sym_k\] as follows. For any word $w \in W'$, let $\phi_1(w) \in W$ be the word formed from~$w$ by replacing all the letters $1, \ldots, k$ with $0$ and replacing all copies of the letters $k + 1, \ldots, k + \ell$ with $1, \ldots, \ell$ respectively. Let $\phi_2(w) \in \Sym_k$ be the word formed from~$w$ by deleting all copies of the letters $k+1, \ldots, k+\ell$. It is easy to see that $\phi = (\phi_1, \phi_2)$ is a bijection. Moreover, for all $w \in W'$, we have that $\pi(w)$ is the concatenation of $\pi(p_+(\phi_1(w)))$ and $\pi(\phi_2(w))$. Hence, we may factor \eqref{eq:wprime}:
    \begin{align*}
        \FSurInv{e_\lambda e_1^k} = \left(\sum_{w \in W} (-1)^{|\lambda| - |\pi(p_+(w))|} e_{\pi(p_+(w))}\right) \left( \sum_{w \in \Sym_k} (-1)^{k - |\pi(w)|} e_{|\pi(w)|}\right).
    \end{align*}
    As in the proof of \Cref{corollary:e1ell}, the second factor is equal to $e_1(e_1 - 1) \cdots (e_1 - k + 1)$. The result follows.
\end{proof}
\begin{coro}
    Let $f \in \Lambda$ and $k \geq 0$. Then $e_1(e_1 - 1) \cdots (e_1 - k + 1)$ divides~$f$ if and only if $e_1^k$ divides $\FSur{f}$.
\end{coro}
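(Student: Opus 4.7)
The plan is to package Corollary \ref{corollary:fsurinveprod} as a linear operator on $\Lambda$ and use it to get both directions at once. Define $\psi \colon \Lambda \to \Lambda$ by
\[
    \FSurInv{g \cdot e_1^k} = \psi(g) \cdot e_1(e_1 - 1) \cdots (e_1 - k + 1).
\]
This is well-defined because Corollary \ref{corollary:fsurinveprod} (applied to every $e_\lambda$ in the expansion of $g$) guarantees that $\FSurInv{g \cdot e_1^k}$ lies in the principal ideal generated by $e_1(e_1-1)\cdots(e_1-k+1)$, and multiplication by this nonzero element is injective in the integral domain $\Lambda$. Linearity of $\psi$ is clear.

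The first real step is to show $\psi$ is a bijection. Since $\Fa_{\Sur}^{-1}$ preserves degree and leading term (dual to Proposition \ref{proposition:fsur}), the symmetric function $\FSurInv{e_\lambda \cdot e_1^k}$ has degree $|\lambda| + k$ and leading term $e_\lambda e_1^k$; dividing by $e_1(e_1-1)\cdots(e_1-k+1)$, which has degree $k$ and leading term $e_1^k$, we find that $\psi(e_\lambda) = e_\lambda + (\text{lower degree})$. Hence the matrix of $\psi$ in the basis $\{e_\lambda\}$ is upper unitriangular with respect to degree, so $\psi$ is invertible.

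With $\psi$ in hand, both implications fall out. For the backward direction, if $e_1^k \mid \FSur{f}$, write $\FSur{f} = g \cdot e_1^k$; applying $\Fa_{\Sur}^{-1}$ gives
\[
    f = \FSurInv{g \cdot e_1^k} = \psi(g) \cdot e_1(e_1-1) \cdots (e_1-k+1),
\]
which is divisible by $e_1(e_1-1)\cdots(e_1-k+1)$. For the forward direction, if $f = e_1(e_1-1)\cdots(e_1-k+1) \cdot h$, surject through $\psi$: choose $g \in \Lambda$ with $\psi(g) = h$. Then $f = \psi(g) \cdot e_1(e_1-1)\cdots(e_1-k+1) = \FSurInv{g \cdot e_1^k}$, so $\FSur{f} = g \cdot e_1^k$, divisible by $e_1^k$.

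The only real obstacle is verifying that $\psi$ is genuinely a bijection; once the leading-term analysis of $\Fa_{\Sur}^{-1}$ is invoked, both the well-definedness and the triangularity of $\psi$ are immediate consequences of Corollary \ref{corollary:fsurinveprod}, so the rest is bookkeeping.
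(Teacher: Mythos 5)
Your proof is correct, and it replaces the paper's finishing argument with a slightly different one. Both arguments hinge on \Cref{corollary:fsurinveprod} providing the inclusion ``$\Fa_{\Sur}^{-1}$ of a multiple of $e_1^k$ is a multiple of $e_1(e_1-1)\cdots(e_1-k+1)$,'' but they upgrade it to the full equivalence differently. The paper truncates by degree: it sets $I$ and $J$ to be the sets of symmetric functions of degree at most $n=\deg f$ divisible by $e_1(e_1-1)\cdots(e_1-k+1)$ and by $e_1^k$ respectively, notes $\FSurInv{J}\subseteq I$, and then uses equality of ranks together with torsion-freeness of $\Lambda/J$ to force $\FSurInv{J}=I$, whence $\FSur{I}=J$. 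You instead package the quotient as a globally defined operator $\psi$ with $\FSurInv{g\,e_1^k}=\psi(g)\cdot e_1(e_1-1)\cdots(e_1-k+1)$ and show $\psi(e_\lambda)=e_\lambda+(\text{lower degree})$, so $\psi$ is unitriangular with respect to the degree filtration in the basis $\{e_\lambda\}$ and hence bijective over $\Z$; surjectivity of $\psi$ gives the forward direction and well-definedness gives the backward one. Your route needs no degree truncation, no rank count, and no torsion-freeness remark (integrality is automatic from the unitriangular matrix), at the cost of a bit more setup; the paper's is shorter. One small wording point: the fact that $\Fa_{\Sur}^{-1}$ preserves degree and leading term is not literally ``dual'' to \Cref{proposition:fsur} but follows from it via the construction of $\Fa_{\Sur}^{-1}$ (the operator $\mathcal{M}\{f\}=f-\FSur{f}$ strictly lowers degree); the statement you need is true and is effectively proved in the paper.
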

\begin{proof}
    Let $n = \deg f$. Let~$I$ be the set of all symmetric functions of degree at most~$n$ that are divisible by $e_1(e_1 - 1) \cdots (e_1 - k + 1)$ and let~$J$ be the set of all symmetric functions of degree at most~$n$ that are divisible by $e_1^k$. Now,~$J$ is spanned by symmetric functions of the form $e_\lambda e_1^k$, where~$\lambda$ is a partition with $|\lambda| \leq n - k$. By \Cref{corollary:fsurinveprod}, we have $\FSurInv{J} \subseteq I$. Since~$I$ and~$J$ have the same dimension (and $\Lambda / J$ is torsion-free), it follows that $\FSurInv{J} = I$. Thus, $\FSur{I} = J$. The result follows.
\end{proof}
We are almost ready to prove \Cref{theorem:computation}. First, we will restate some lemmas from Loehr and Remmel's 2011 ``expos\'e'' on plethysm \cite{MR2765321}.
\begin{lemma}[{\cite[Example~1]{MR2765321}}]\label{lemma:plethysm}
    There is a unique binary operation $\bullet[\bullet] \colon \Lambda \times \Z\llbracket t_1, \cdots, t_\ell \rrbracket \to \Z\llbracket t_1, \cdots, t_\ell \rrbracket$ satisfying the following properties.
    \begin{enumerate}[label=(\roman*)]
        \item For any fixed $g \in \Z\llbracket t_1, \cdots, t_\ell \rrbracket$, the function $\bullet[g] \colon \Lambda \to \Z\llbracket t_1, \cdots, t_\ell \rrbracket$ is a ring homomorphism.
        \item For any fixed $k > 0$, the function $p_k[\bullet] \colon \Z\llbracket t_1, \cdots, t_\ell\rrbracket \to \Z\llbracket t_1, \cdots, t_\ell\rrbracket$ is a ring homomorphism which preserves summable infinite series.
        \item For any~$k$ and~$i$, we have $p_k[t_i] = t_i^k$.
    \end{enumerate}
\end{lemma}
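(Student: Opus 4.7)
The plan is to split the lemma into uniqueness and existence, the latter being the substantive half.

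For uniqueness, I would observe that $\Lambda$ embeds into $\Lambda_{\mathbb{Q}} = \mathbb{Q}[p_1, p_2, \ldots]$, so every $f \in \Lambda$ has a unique expression as a polynomial $P(p_1, p_2, \ldots)$ with rational coefficients. By property~(i), $f[g]$ must equal $P(p_1[g], p_2[g], \ldots)$, so the whole operation is determined by the values $p_k[g]$ for $k \geq 1$. For each such $k$, property~(ii) says that $p_k[\bullet]$ is a continuous (summability-preserving) ring endomorphism of $\Z\llbracket t_1, \ldots, t_\ell\rrbracket$; because the prescribed images $p_k[t_i] = t_i^k$ have no constant term, any such endomorphism is determined on all of $\Z\llbracket t_1, \ldots, t_\ell \rrbracket$ by its values on the topological generators $t_1, \ldots, t_\ell$, which are fixed by property~(iii).

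For existence, I would define $p_k[g]$ by the substitution $t_i \mapsto t_i^k$, which is manifestly a continuous ring endomorphism of $\Z\llbracket t_1, \ldots, t_\ell\rrbracket$ realizing (ii) and (iii). Then for $f \in \Lambda$ written as a rational polynomial $P(p_1, p_2, \ldots)$, I would set $f[g] := P(p_1[g], p_2[g], \ldots)$; property~(i) follows because polynomial evaluation is a ring homomorphism. It remains to check that this is well-defined as a map \emph{into} $\Z\llbracket t_1, \ldots, t_\ell\rrbracket$ rather than merely into $\mathbb{Q}\llbracket t_1, \ldots, t_\ell\rrbracket$.

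The main obstacle is therefore integrality. I would first dispatch the case that $g$ has nonnegative integer coefficients by writing $g = m_1 + m_2 + \cdots$ as a sum (with multiplicities) of monomials in the $t_i$ and checking that the definition above of $f[g]$ agrees with the substitution $f(m_1, m_2, \ldots)$; since every $f \in \Lambda$ is an integer combination of monomial symmetric functions $m_\lambda$, the result lies in $\Z\llbracket t_1, \ldots, t_\ell\rrbracket$. For a general $g = g_+ - g_-$ with $g_\pm$ of nonnegative coefficients, I would reduce to the nonnegative case by introducing auxiliary variables $s_1, \ldots, s_\ell$, computing $f[g_+(t) + g_-(s)]$ inside $\Z\llbracket t_1, \ldots, t_\ell, s_1, \ldots, s_\ell\rrbracket$, and then specializing using the plethystic negation rule (\Cref{lemma:negation}) to send the $s$-variables to formal negatives of the $t$-variables. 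Since the lemma is Example~1 of Loehr--Remmel \cite{MR2765321}, one may alternatively simply cite their treatment.
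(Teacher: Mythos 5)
First, note that the paper itself gives no proof of this lemma: it is imported wholesale from Loehr--Remmel (their Example~1), so any self-contained argument you give is necessarily a different route, and your closing remark that one may simply cite their treatment is exactly what the paper does. Your uniqueness argument is fine ($p_k[\bullet]$ is forced to be the substitution $t_i \mapsto t_i^k$ by (ii) and (iii), and a ring homomorphism $\Lambda \to \Z\llbracket t_1, \ldots, t_\ell\rrbracket$ extends uniquely to $\Lambda_{\mathbb{Q}} = \mathbb{Q}[p_1, p_2, \ldots]$, so it is determined by the $p_k[g]$), and so is the existence argument together with integrality when $g$ has nonnegative coefficients, via monomial substitution and the fact that the $m_\lambda$ form a $\Z$-basis of $\Lambda$.

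The step that would fail as written is the reduction of integrality to the nonnegative case. ``Specializing to send the $s$-variables to formal negatives of the $t$-variables'' is not an available operation: plethysm does not commute with substituting negated series, because $p_k$ raises signs to the $k$-th power. Concretely, no substitution $s_i \mapsto \sigma_i(t)$ applied to $f[g_+(t) + g_-(s)]$ can produce $f[g_+(t) - g_-(t)]$, since for $f = p_k$ this would require $\sigma_i(t)^k = -t_i^k$ for every $k$, which is impossible already for $k = 2$; the same defect kills the variant $f[g_+ + \epsilon g_-]\big|_{\epsilon = -1}$, which yields $g_+(t^k) + (-1)^k g_-(t^k)$ instead of $g_+(t^k) - g_-(t^k)$. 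The correct repair is the coproduct-plus-negation computation: for $f = s_\lambda$ one has $s_\lambda[g_+ - g_-] = \sum_\mu (-1)^{|\mu|}\, s_{\lambda/\mu}[g_+]\, s_{\mu^T}[g_-]$, i.e.\ the analogues of \Cref{proposition:plethysticaddition} and \Cref{lemma:negation} over $\Z\llbracket t_1, \ldots, t_\ell\rrbracket$ (both are stated in the paper only for arguments in $\overline{\Lambda}$, so they must first be re-verified in this setting, which is immediate by checking on power sums). Each factor on the right lies in $\Z\llbracket t_1, \ldots, t_\ell\rrbracket$ by your nonnegative case, and the sum is finite in each degree, so integrality of $f[g]$ follows. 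With that substitution-step replaced, your proof is complete.
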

We refer to the operation from \Cref{lemma:plethysm} as \emph{plethysm}, because it is closely related to the plethysm of symmetric functions $\bullet[\bullet] \colon \Lambda \times \Lambda \to \Lambda$ mentioned in \Cref{section:preliminaries}. For example, the two operations are related by a kind of associative property:
\begin{lemma}[{Associativity of Plethysm, \cite[Theorem~5]{MR2765321}}]\label{lemma:associativity}
    Let $f, g \in \Lambda$ and $h \in \Z\llbracket t_1, \cdots, t_\ell \rrbracket$. Then \[f[g[h]] = (f[g])[h] \in \Z\llbracket t_1, \cdots, t_\ell \rrbracket.\]
\end{lemma}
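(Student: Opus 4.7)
The plan is to reduce the identity to the case $f=p_k$, $g=p_j$ via two successive uniqueness-of-ring-homomorphism arguments, and then verify the base case by direct computation.

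Fix $g\in\Lambda$ and $h\in\Z\llbracket t_1,\ldots,t_\ell\rrbracket$ and consider both sides as functions of $f$. The map $f\mapsto f[g[h]]$ is a ring homomorphism $\Lambda\to\Z\llbracket t_1,\ldots,t_\ell\rrbracket$ by property (i) of \Cref{lemma:plethysm}. The map $f\mapsto(f[g])[h]$ is the composition of the ring endomorphism $\bullet[g]\colon\Lambda\to\Lambda$ (the standard plethysm on $\Lambda$ recalled in \Cref{section:preliminaries}) with the ring homomorphism $\bullet[h]\colon\Lambda\to\Z\llbracket t_1,\ldots,t_\ell\rrbracket$ from (i), so it too is a ring homomorphism. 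Tensoring with $\mathbb{Q}$, the power sums $p_1,p_2,\ldots$ generate $\Lambda_{\mathbb{Q}}$ as a $\mathbb{Q}$-algebra, so it suffices to verify $p_k[g[h]]=(p_k[g])[h]$ for every $k\geq 1$; equality over $\mathbb{Q}$ implies equality over $\Z$ via the injection $\Z\llbracket t_1,\ldots,t_\ell\rrbracket\hookrightarrow\mathbb{Q}\llbracket t_1,\ldots,t_\ell\rrbracket$.

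Now fix $f=p_k$ and repeat the argument in $g$. The map $g\mapsto p_k[g[h]]$ is the composition of $g\mapsto g[h]$ (ring homomorphism by (i)) with the ring endomorphism $p_k[\bullet]$ of $\Z\llbracket t_1,\ldots,t_\ell\rrbracket$ (property (ii)), while $g\mapsto(p_k[g])[h]$ is the composition of the ring endomorphism $g\mapsto p_k[g]$ of $\Lambda$ with $\bullet[h]$ (property (i)). Both are ring homomorphisms, so the same power-sum reduction reduces the problem to proving $p_k[p_j[h]]=(p_k[p_j])[h]$ for all $j,k\geq 1$.

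To finish, let $\phi_m\colon\Z\llbracket t_1,\ldots,t_\ell\rrbracket\to\Z\llbracket t_1,\ldots,t_\ell\rrbracket$ denote the ring homomorphism sending $t_i\mapsto t_i^m$; by (ii) and (iii) this is precisely the action of $p_m[\bullet]$. Since $\phi_k\circ\phi_j$ sends $t_i$ to $t_i^{kj}$, we conclude $\phi_k\circ\phi_j=\phi_{kj}$, and hence $p_k[p_j[h]]=\phi_k(\phi_j(h))=\phi_{kj}(h)=p_{kj}[h]$. On the other side, the analogous description of $p_m[\bullet]$ acting on $\Lambda$ (substitute $x_i\mapsto x_i^m$) yields $p_k[p_j]=\sum_i(x_i^j)^k=p_{kj}$, so $(p_k[p_j])[h]=p_{kj}[h]$ as well. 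The main obstacle is purely bookkeeping: three rings are in play ($\Lambda$ in the $f$- and $g$-slots, $\Z\llbracket t_1,\ldots,t_\ell\rrbracket$ in the $h$-slot), and one must carefully confirm that each relevant map is an \emph{honest} ring homomorphism (in particular using the ``preserves summable infinite series'' clause of (ii) so that $p_k[\bullet]$ is well-defined on arbitrary $g[h]$) before invoking the power-sum generator argument; once those compositions are identified, the base case is immediate.
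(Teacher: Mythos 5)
Your proof is correct. Note that the paper does not actually prove this lemma; it is quoted from Loehr and Remmel's expos\'e (their Theorem~5), so there is no internal proof to compare against. Your argument is the standard one and is essentially the route the cited source takes: use the characterization in \Cref{lemma:plethysm} to see that, with the other arguments fixed, both sides are ring homomorphisms out of $\Lambda$ (composing $\bullet[g]$ or $p_k[\bullet]$ on $\Lambda$ with $\bullet[h]$, respectively $\bullet[h]$ with $p_k[\bullet]$ on $\Z\llbracket t_1,\ldots,t_\ell\rrbracket$), reduce to $f=p_k$, $g=p_j$, and check the base case by identifying $p_m[\bullet]$ on $\Z\llbracket t_1,\ldots,t_\ell\rrbracket$ with the substitution $t_i\mapsto t_i^m$ and $p_k[p_j]=p_{kj}$ in $\Lambda$. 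You handled the two points where such an argument could silently go wrong: the power sums only generate $\Lambda$ over $\mathbb{Q}$, so the agreement-on-generators step needs the passage through $\mathbb{Q}\llbracket t_1,\ldots,t_\ell\rrbracket$ and the injectivity of $\Z\llbracket t_1,\ldots,t_\ell\rrbracket\hookrightarrow\mathbb{Q}\llbracket t_1,\ldots,t_\ell\rrbracket$, which you state; and the identification $p_m[\bullet]=\phi_m$ genuinely requires the ``preserves summable infinite series'' clause of (ii) (a ring homomorphism of power series rings is otherwise only pinned down on polynomials), which you also invoke, though your parenthetical slightly misstates its role as well-definedness rather than uniqueness. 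No gaps.
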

If~$g$ has positive integer coefficients, then the plethysm $f[g]$ can be described as an evaluation:
\begin{lemma}[{Monomial Substitution Rule, \cite[Theorem~7]{MR2765321}}]\label{lemma:monomialsubstitution}
    Let $f \in \Lambda$ and let $M_1, M_2, M_3, \ldots \in \Z\llbracket t_1, \cdots, t_\ell\rrbracket$ be a \textup{(}finite or infinite\textup{)} sequence of monic monomials. Then
    \[f\left[\sum_n M_n\right] = f(M_1, M_2, M_3, \ldots),\] where the right-hand side denotes the evaluation of~$f$ at $M_1, M_2, M_3, \ldots$.
\end{lemma}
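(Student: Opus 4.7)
The plan is to exploit the uniqueness/characterization in Lemma \ref{lemma:plethysm} and reduce everything to the case of a single power sum. Both sides of the desired equation are functions of $f \in \Lambda$ with values in $\Z\llbracket t_1, \ldots, t_\ell \rrbracket$, and I would first argue that both are ring homomorphisms: the left-hand side $f \mapsto f[\sum_n M_n]$ by property (i) of Lemma \ref{lemma:plethysm}, and the right-hand side $f \mapsto f(M_1, M_2, \ldots)$ because ordinary substitution into a symmetric function is a ring homomorphism. Both ring homomorphisms extend uniquely to $\Q$-algebra homomorphisms from $\Lambda_\Q = \Lambda \otimes_{\Z} \Q$ into $\Q\llbracket t_1, \ldots, t_\ell \rrbracket$, and since $\Lambda_\Q$ is freely generated as a $\Q$-algebra by the power sums $p_1, p_2, p_3, \ldots$, it suffices to verify the identity when $f = p_k$ for each $k \geq 1$.

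For $f = p_k$, the right-hand side is $p_k(M_1, M_2, \ldots) = \sum_n M_n^k$ directly from the definition of the power sum symmetric function. For the left-hand side, property (ii) of Lemma \ref{lemma:plethysm} says that $p_k[\bullet]$ is a ring homomorphism which preserves summable infinite series, so
\[
p_k\!\left[\sum_n M_n\right] = \sum_n p_k[M_n].
\]
Writing each monic monomial as $M_n = t_1^{a_{n,1}} \cdots t_\ell^{a_{n,\ell}}$ and applying the ring-homomorphism property of $p_k[\bullet]$ together with property (iii) $p_k[t_i] = t_i^k$, I get
\[
p_k[M_n] = p_k[t_1]^{a_{n,1}} \cdots p_k[t_\ell]^{a_{n,\ell}} = t_1^{k a_{n,1}} \cdots t_\ell^{k a_{n,\ell}} = M_n^k.
\]
Summing over $n$, both sides agree with $\sum_n M_n^k$, which establishes the identity for $f = p_k$.

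The only mild technical point — which I do not expect to be a genuine obstacle — is to verify that all the infinite sums involved are summable in $\Z\llbracket t_1, \ldots, t_\ell \rrbracket$ so that the manipulations make sense. But this is automatic: if $\sum_n M_n$ is summable (so that $\sum_n M_n \in \Z\llbracket t_1, \ldots, t_\ell\rrbracket$ is defined), then for any fixed multidegree $(d_1, \ldots, d_\ell)$ only finitely many of the $M_n$ can contribute, hence only finitely many of the $M_n^k$ (which have total degree $k$ times that of $M_n$) contribute to any fixed multidegree, and so $\sum_n M_n^k$ is summable as well. Finally, equality in $\Lambda_\Q$ restricts to equality in $\Lambda$ because the inclusion $\Z\llbracket t_1, \ldots, t_\ell\rrbracket \hookrightarrow \Q\llbracket t_1, \ldots, t_\ell\rrbracket$ is injective. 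This completes the proof sketch.
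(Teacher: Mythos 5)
Your proof is correct. The paper does not prove this lemma itself—it is quoted directly from Loehr and Remmel \cite{MR2765321} (their Theorem~7)—and your argument (pass to $\Lambda_\Q$, use that both sides are ring homomorphisms determined by their values on the power sums, verify $p_k[\sum_n M_n]=\sum_n M_n^k$ via properties (i)--(iii) of \Cref{lemma:plethysm}, and check summability) is the standard proof and essentially the one in that reference, including the correct observation that extending scalars to $\Q$ is genuinely needed since the $p_k$ do not generate $\Lambda$ over $\Z$.
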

In general, plethysm can be expressed as a Hall inner product:
\begin{lemma}\label{lemma:plethysmhall}
    Let $M_1, M_2, M_3, \ldots \in \Z[t_1, \cdots, t_\ell]$ be a \textup{(}finite or infinite\textup{)} sequence of monic monomials and let $a_1, a_2, a_3, \ldots \in \Z$ be a sequence of the same length. Suppose that the series $\sum_{n} a_n M_n$ is summable in $\Z\llbracket t_1, \cdots, t_\ell\rrbracket$. Then for any $f \in \Lambda$, we have
    \begin{equation}\label{equation:moniclemma}
        f\left[\sum_{n} a_n M_n\right] = \left\langle f, \prod_{n} H(M_n)^{a_n}\right\rangle.
    \end{equation}
\end{lemma}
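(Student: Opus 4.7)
The plan is to reduce the identity to the case $f = p_\lambda$ by linearity and then compute both sides explicitly. Both sides are $\mathbb{Q}$-linear in $f$ --- the left-hand side by property~(i) of \Cref{lemma:plethysm}, and the right-hand side by bilinearity of the Hall inner product --- so since $\{p_\lambda\}$ is a $\mathbb{Q}$-basis of $\Lambda \otimes \mathbb{Q}$, it suffices to treat $f = p_\lambda$.

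Set $q_k = \sum_n a_n M_n^k \in \mathbb{Z}\llbracket t_1, \ldots, t_\ell \rrbracket$ and $q_\mu = \prod_i q_{\mu_i}$. For the left-hand side, properties~(ii) and~(iii) of \Cref{lemma:plethysm} give $p_k[\sum_n a_n M_n] = \sum_n a_n M_n^k = q_k$, and then property~(i) (plethysm is a ring homomorphism in its first argument) yields $p_\lambda[\sum_n a_n M_n] = q_\lambda$. For the right-hand side, I would use the identity $H(t) = \exp\bigl(\sum_{k \geq 1} \frac{p_k}{k} t^k\bigr)$, together with the fact that the factors $H(M_n)$ pairwise commute, to obtain
\[
\prod_n H(M_n)^{a_n} = \exp\left(\sum_{k \geq 1} \frac{p_k}{k} q_k\right) = \prod_{k \geq 1} \sum_{m \geq 0} \frac{(p_k q_k)^m}{k^m \, m!} = \sum_\mu \frac{p_\mu q_\mu}{z_\mu},
\]
where the last sum ranges over all partitions $\mu$. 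The orthogonality $\langle p_\lambda, p_\mu \rangle = \delta_{\lambda\mu} z_\lambda$ then yields $\bigl\langle p_\lambda, \prod_n H(M_n)^{a_n}\bigr\rangle = q_\lambda$, which matches the left-hand side.

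The main obstacle is not conceptual but bookkeeping: verifying that every formal manipulation above converges in $\Lambda\llbracket t_1, \ldots, t_\ell\rrbracket$ given only the summability of $\sum_n a_n M_n$. By that hypothesis, for any fixed $t$-degree $d$ only finitely many $M_n$ have total degree at most $d$, so truncating in $t$-degree reduces everything to a finite product of formal series in the $p_k$'s, where the exponential identity and the rearrangement of the infinite product are routine. One should also check separately that the case where some $M_n = 1$ (forcing one to work in $\overline{\Lambda}\llbracket t_1, \ldots, t_\ell \rrbracket$ rather than $\Lambda \llbracket t_1, \ldots, t_\ell\rrbracket$) does not break the argument; since all manipulations are degree-wise finite in the $p_k$ grading, this is harmless.
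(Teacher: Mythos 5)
Your argument is correct, and it takes a genuinely different route from the paper's. You verify the identity on the power sum basis after extending scalars to $\mathbb{Q}$ (harmless, since both sides are additive in $f$ and take values in a torsion-free group), computing the left side from properties (i)--(iii) of \Cref{lemma:plethysm} and the right side from the exponential formula $H(t) = \exp\bigl(\sum_k \tfrac{p_k}{k}t^k\bigr)$ together with $\langle p_\lambda, p_\mu\rangle = \delta_{\lambda\mu} z_\lambda$; this handles arbitrary integer exponents $a_n$ in one stroke, with the only cost being the (correctly identified, and routine) check that the infinite product rearrangement converges degreewise in $t$ and in the symmetric-function grading. The paper instead stays over $\mathbb{Z}$: it observes that for a fixed monomial $M$ each side is a polynomial in the $a_n$, so it suffices to treat $a_n \geq 0$ and then, duplicating each $M_n$, the case $a_n = 1$; it then tests against the monomial basis, where $\langle m_\lambda, h_\mu\rangle = \delta_{\lambda\mu}$ turns the right side into $m_\lambda(M_1, M_2, \ldots)$ and the Monomial Substitution Rule (\Cref{lemma:monomialsubstitution}) finishes the proof. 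Your route makes the intermediate identity $\prod_n H(M_n)^{a_n} = \sum_\mu p_\mu q_\mu / z_\mu$ explicit (essentially the same exponential manipulation the paper performs in the proof of \Cref{theorem:computation}), while the paper's route avoids all convergence bookkeeping and rational coefficients by outsourcing the computation to the polynomiality trick and the substitution rule; both are complete proofs.
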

\begin{proof}
    Fix $M_1, M_2, M_3, \ldots$ and let $a_1, a_2, a_3, \ldots$ vary. Given any fixed monomial $M \in \Z[t_1, \ldots, t_\ell]$, it is easy to see that the coefficient of~$M$ on either side of \eqref{equation:moniclemma} is a polynomial in $a_1, a_2, a_3, \ldots$. Hence, we may assume that $a_n \geq 0$ for all~$n$. Then by replacing each $M_n$ with $a_n$ copies of $M_n$, we may make the even stronger assumption that $a_n = 1$ for all~$n$. In other words, we wish to prove
    \begin{equation}\label{equation:innerproducth}
        f\left[\sum_{n} M_n\right] = \left\langle f, \prod_{n} H(M_n)\right\rangle.
    \end{equation}
    By linearity, it suffices to show \eqref{equation:innerproducth} in the case that $f = m_\lambda$ is a monomial symmetric function. Then, since $\langle m_\lambda, h_\mu \rangle = \delta_{\lambda \mu}$, the right-hand side of the equation becomes $m_\lambda(M_1, M_2, M_3, \ldots)$. The result follows from the monomial substitution rule (\Cref{lemma:monomialsubstitution}).
\end{proof}
\begin{proof}[Proof of \Cref{theorem:invcomputation}]
    In what follows, let \[L = L_1 + L_2 + L_3 + \cdots \in \overline{\Lambda}\] and \[\tilde{L} = \omega(L_1) - \omega(L_2) + \omega(L_3) - \cdots \in \overline{\Lambda}.\]

    \statement{(a)} Let $\overline{\omega} \colon \Lambda \to \Lambda$ be the involution given by \[\overline{\omega}(f) = f[-p_1] = (-1)^{\deg f} \omega(f)\] for all homogeneous $f \in \Lambda$. Clearly, $\overline{\omega}$ is a ring automorphism and \[\overline{\omega}(H(t)) = E(-t) = \frac{1}{H(t)}.\] We wish to show that
    \begin{equation}\label{equation:omegafainv}
        (\overline{\omega} \circ \Fa_{\Sur}^{-1} \circ \overline{\omega}) \left\{\prod_{i=1}^\ell H(t_i)\right\} = \prod_{w \in \Lyndon([\ell])} H(t^w).
    \end{equation}
    To do so, let $f \in \Lambda$ be arbitrary. It is sufficient to show that each side of \eqref{equation:omegafainv} has the same Hall inner product with~$f$. By \Cref{theorem:summary}(c), $\overline{\omega} \circ \Fa_{\Sur}^{-1} \circ \overline{\omega}$ is adjoint to plethysm by~$L$, so
    \begin{align*}
        \left\langle f, (\overline{\omega} \circ \Fa_{\Sur}^{-1} \circ \overline{\omega}) \left\{\prod_{i=1}^\ell H(t_i)\right\}\right\rangle &= \left\langle f[L], \prod_{i=1}^\ell H(t_i)\right\rangle \\
        &= (f[L])[t_1 + \cdots + t_\ell] \\
        &= f[L[t_1 + \cdots + t_\ell]] \\
        &= f[L(t_1, \ldots, t_\ell)] \\
        &= f\left[\sum_{w \in \Lyndon([\ell])} t^w \right] \\
        &= \left \langle f, \prod_{w \in \Lyndon([\ell])} H(t^w) \right \rangle,
    \end{align*}
    where we used \Cref{lemma:plethysmhall}, \Cref{lemma:associativity}, \Cref{lemma:monomialsubstitution}, and \Cref{theorem:witt}. This completes the proof of \eqref{equation:omegafainv} and of part (a) of the theorem.

    \statement{(b)} Again, let $f \in \Lambda$ be arbitrary. It is sufficient to show that each side of the equation has the same Hall inner product with~$f$. By \Cref{theorem:summary}(c), $\Fa_{\Sur}^{-1}$ is adjoint to plethysm by $\tilde{L}$, so
    \begin{align}
        \left\langle f, \FSurInv{\prod_{i=1}^\ell H(t_i)}\right\rangle &= \left\langle f[\tilde{L}], \prod_{i=1}^\ell H(t_i)\right\rangle\nonumber\\
        &= (f[\tilde{L}])[t_1 + \cdots + t_\ell]\nonumber\\
        &= f[\tilde{L}[t_1 + \cdots + t_\ell]]\label{eq:tildel}
    \end{align}
    Now, let us describe the monomials that appear in the plethysm $\tilde{L}[t_1 + \cdots + t_\ell]$. By the definition of Lyndon symmetric functions, we have
    \begin{align*}
        \tilde{L} &=\sum_{n} (-1)^{n - 1} \omega(L_n)\\
        &= \sum_{n} \frac{(-1)^{n-1}}{n} \sum_{d \mid n} \mu(d) \omega(p_d^{n/d}) \\
        &= \sum_{n} \frac{(-1)^{n-1}}{n} \sum_{d \mid n} \mu(d) (-1)^{(d-1)n/d} p_d^{n/d} \\
        &= \sum_{n} \frac{1}{n} \sum_{d \mid n} \mu(d) (-1)^{n/d - 1} p_d^{n/d}.
    \end{align*}
    Now, the term $(-1)^{n/d - 1}$ is equal to $-1$ if~$d$ divides $\frac{n}{2}$ and $1$ otherwise. Hence,
    \[
        \tilde{L} = \sum_{n} \frac{1}{n}\left(\sum_{d \mid n} \mu(d) p_d^{n/d} - 2\sum_{d \mid \frac{n}{2}} \mu(d) p_d^{n/d}\right)
    \]
    where the sum over $d \mid \frac{n}{2}$ is understood to be empty if~$n$ is odd. Splitting this into two sums and then performing the change of variables $\frac{n}{2} \to n$ in the second, we obtain
    \begin{align*}
        \tilde{L} &= \sum_{n} \frac{1}{n}\sum_{d \mid n} \mu(d) p_d^{n/d} - \sum_{n} \frac{2}{n}\sum_{d \mid \frac{n}{2}} \mu(d) p_d^{n/d} \\
        &= \sum_{n} \frac{1}{n}\sum_{d \mid n} \mu(d) p_d^{n/d} - \sum_{n} \frac{1}{n}\sum_{d \mid n} \mu(d) p_d^{2n/d} \\
        &= L - L[p_1^2].
    \end{align*}
    By \Cref{lemma:associativity} and \Cref{theorem:witt},
    \begin{align*}
        \tilde{L}[t_1 + \cdots + t_\ell] &= L[t_1 + \cdots + t_\ell] - (L[p_1^2])[t_1 + \cdots + t_\ell] \\
        &= L[t_1 + \cdots + t_\ell] - L[p_1^2[t_1 + \cdots + t_\ell]] \\
        &= L[t_1 + \cdots + t_\ell] - L[(t_1 + \cdots + t_\ell)^2] \\
        &= \sum_{w \in \Lyndon([\ell])} t^w - \sum_{w \in \Lyndon([\ell]^2)} t^w.
    \end{align*}
    Substituting into \eqref{eq:tildel} and using \Cref{lemma:plethysmhall} one last time, we finally obtain
    \begin{align*}
        \left\langle f, \FSurInv{\prod_{i=1}^\ell H(t_i)} \right\rangle &= f\left[\sum_{w \in \Lyndon([\ell])} t^w - \sum_{w \in \Lyndon([\ell]^2)} t^w\right] \\
        &= \left\langle f, \frac{\prod_{w \in \Lyndon([\ell])} H(t^w)}{\prod_{w \in \Lyndon([\ell]^2)} H(t^w)}\right\rangle,
    \end{align*}
    completing the proof.
\end{proof}

\bibliographystyle{plain}
\bibliography{main}
\end{document}